\documentclass[12pt]{amsart}
\usepackage{amsmath}
\usepackage{amscd}
\usepackage{amssymb}
\usepackage{amsfonts}
\usepackage{amsthm}
\usepackage{bbm}
\usepackage{cancel}
\usepackage{color}
\usepackage{eucal}
\usepackage{enumerate,yfonts}
\usepackage{enumitem}
\usepackage{relsize}
\usepackage{float}
\usepackage{caption}

 \allowdisplaybreaks

\usepackage{pdfsync}
 \usepackage[all,cmtip]{xy}
\usepackage{graphicx}
\usepackage{graphics}
\usepackage{hyperref}
\usepackage{latexsym}
\usepackage{mathrsfs}
 \usepackage{placeins}
\usepackage{pstricks}
\usepackage{bookmark}

\usepackage{multirow}
\usepackage{array}
\newcolumntype{P}[1]{>{\centering\arraybackslash}p{#1}}
\usepackage{longtable}

\usepackage{stmaryrd}
\usepackage{url}

\DeclareFontFamily{U}{mathx}{\hyphenchar\font45}
\DeclareFontShape{U}{mathx}{m}{n}{
      <5> <6> <7> <8> <9> <10>
     <10.95> <12> <14.4> <17.28> <20.74> <24.88>
    mathx10
      }{}
\DeclareSymbolFont{mathx}{U}{mathx}{m}{n}
\DeclareMathAccent{\widecheck}{\mathalpha}{mathx}{"71}

\newtheorem{thm}{Theorem}[section]
\newtheorem{corollary}[thm]{Corollary}
\newtheorem{lemma}[thm]{Lemma}

\newtheorem{proposition}[thm]{Proposition}

\newtheorem{thm-dfn}[thm]{Theorem-Definition}

\newtheorem{remark}[thm]{Remark}

\newtheorem{theorem}{Theorem}[section]

\numberwithin{equation}{section}

\newcommand{\fa}{{\mathfrak a}}

\newcommand{\fF}{{\mathfrak{F}}}

\newcommand{\Lt}{{\mathfrak{t}}}
\newcommand{\Lg}{{\mathfrak g}}

\newcommand{\rf}{{\mathrm f}}

\newcommand{\rs}{\mathrm s}

\newcommand{\bC}{{\mathbb C}}

\newcommand{\bG}{{\mathbb G}}
\newcommand{\bZ}{{\mathbb Z}}

\newcommand{\bQ}{{\mathbb Q}}

\newcommand{\calF}{{\mathcal F}}

\newcommand{\cO}{{\mathcal O}}

\newcommand{\cN}{{\mathcal N}}
\newcommand{\cH}{{\mathcal H}}

\newcommand{\cP}{{\mathcal P}}
\newcommand{\cE}{{\mathcal E}}

\newcommand{\cZ}{{\mathcal Z}}

\newcommand{\cM}{{\mathcal{M}}}

\newcommand{\cL}{{\mathcal{L}}}

\newcommand{\bft}{\mathbf{t}}

\newcommand{\on}{\operatorname}

\newcommand{\Loc}{\on{LocSys}}

\newcommand{\en}{\on{en}}

\newcommand{\nc}{\newcommand}

\nc{\al}{{\alpha}} \nc{\be}{{\beta}} \nc{\ga}{{\gamma}}
\nc{\ve}{{\varepsilon}} \nc{\Ga}{{\Gamma}} 
\nc{\La}{{\fa}}

\nc{\ad }{{\on{ad }}}

\nc{\aff}{{\on{aff}}} \nc{\Aff}{{\mathbf{Aff}}}

\nc{\der}{{\on{der}}}

\nc{\diag}{{\on{diag}}}

\nc{\Fl}{{\calF\ell}}

\nc{\Hg}{{\on{Higgs}}}

\nc{\Id}{{\on{Id}}}

\nc{\Ind}{{\on{Ind}}}

\nc{\Op}{{\on{Op}}}

\nc{\res}{{\on{res}}}

\nc{\tr}{{\on{tr}}}

\nc{\GSp}{{\on{GSp}}} \nc{\GU}{{\on{GU}}} \nc{\SL}{{\on{SL}}}
\nc{\SU}{{\on{SU}}} \nc{\SO}{{\on{SO}}}

\nc{\nh}{{\Loc_{J^p}(\tau')}}
\nc{\bnh}{{\Loc_{\breve J^p}(\tau')}}

\nc{\bU}{{\overline{U}}} 
\nc{\IC}{{\on{IC}}}

\nc{\op}{{\operatorname{P}}}

\newcommand{\br}{\begin{rouge}}
\newcommand{\er}{\end{rouge}}

\newcommand{\bb}{\begin{bluet}}
\newcommand{\eb}{\end{bluet}}

\newcommand{\triv}{{\mathbf 1}}

\newcommand{\rc}{{\mathrm c}}

\nc{\ot}{\otimes}

\nc{\oh}{{\operatorname{H}}}
\nc{\gr}{{\operatorname{gr}}}
\nc{\rk}{{\operatorname{rank}}}
\nc{\codim}{{\operatorname{codim}}}
\nc{\img}{{\operatorname{Im}}}
\nc{\Span}{{\operatorname{Span}}}
\nc{\Img}{\operatorname{Im}}
\nc{\Char}{\operatorname{Char}}

\newcommand{\beqn}{\begin{equation*}}
\newcommand{\eeqn}{\end{equation*}}

\newcommand{\beq}{\begin{equation}}
\newcommand{\eeq}{\end{equation}}

\newcommand{\bega}{\begin{gathered}}
\newcommand{\eega}{\end{gathered}}

\newcommand{\bern}{\begin{eqnarray*}}
\newcommand{\eern}{\end{eqnarray*}}

\newcommand{\ber}{\begin{eqnarray}}
\newcommand{\eer}{\end{eqnarray}}

\newcommand{\inv}{{\mathbin{/\mkern-4mu/}}}

\begin{document}
\title[Cuspidal character sheaves on graded Lie algebras]{Cuspidal character sheaves on graded exceptional Lie algebras: stable gradings}

         \author{Ting Xue}
         \address{ School of Mathematics and Statistics, University of Melbourne, VIC 3010, Australia, and Department of Mathematics and Statistics, University of Helsinki, Helsinki, 00014, Finland}
         \email{ting.xue@unimelb.edu.au}
\thanks{TX was supported in part by the ARC grant DP250100824.}

\dedicatory{dedicated to George Lusztig on the occasion of his 80th birthday}

\begin{abstract}
We determine cuspidal character sheaves explicitly for all (GIT) stably graded exceptional Lie algebras.
\end{abstract}

\maketitle
\setcounter{tocdepth}{1}
\tableofcontents

\section{Introduction}

In this paper we continue our study, initiated in~\cite{VX2}, of character sheaves for $\bZ/m\bZ$-graded Lie algebras, analogue of Lusztig's character sheaves on (ungraded) Lie algebras. To classify the character sheaves in the graded setting, a first and most important step is to give a complete classification of cuspidal character sheaves, i.e., those that do not arise from smaller groups by parabolic induction. The explicit construction of character sheaves in~\cite{VX2,VX3,X}, together with the uniform (but not explicit) construction of cuspidal character sheaves in~\cite{LTVX}, results in a classification of cuspidal character sheaves for  classical graded Lie algebras (and a few cases in exceptional types). In this paper we work with graded Lie algebras in exceptional types and determine the cuspidal character sheaves explicitly for all GIT stable gradings. More precisely, we determine the cyclotomic Hecke algebras associated to complex reflection groups whose irreducible representations give rise to cuspidal character sheaves. In~\cite{LVX} we have classified all gradings that afford cuspidal character sheaves. We will deal with the remaining (unstable) gradings in a sequel to this paper.

Throughout this paper let $G$ be a simply connected almost simple algebraic group over $\bC$ and let $\Lg=\on{Lie}G$. We fix a primitive $m$-th root of unity $\zeta_m$. Let $\theta:G\to G$ be an automorphism of order $m$. It induces a $\bZ/m\bZ$-grading $
\Lg=\oplus_{i\in\bZ/m\bZ}\Lg_i
$ on $\Lg$, where $\Lg_i\subset\Lg$ is the $\zeta_m^i$-eigenspace of $d\theta$. Recall that $G_0:=G^\theta$ is a connected reductive group by a theorem of Steinberg~\cite{St} (since $G$ is simply connected). It acts on $\Lg_i$ by adjoint action. We say that the automorphism $\theta$ on $G$ and the $\bZ/m\bZ$-grading on $\Lg$ induced by $\theta$  are GIT stable, or stable (for simplicity), if there exists a semisimple element in $\Lg_1$ with finite stabiliser in $G_0$. Such gradings have been classified in~\cite{RLYG}. We will focus on GIT stable gradings in this paper.

In analogous to Lusztig's character sheaves on $\Lg$, the character sheaves on $\Lg_1$ are defined~\cite{VX2} as the simple $G_0$-equivariant $\bC^*$-conic perverse sheaves on $\Lg_1$ with nilpotent singular support. Equivalently, they are Fourier transforms of the simple $G_0$-equivariant perverse sheaves on $\cN_{-1}:=\Lg_{-1}\cap\cN$, where $\cN$ is the nilpotent cone of $\Lg$. Here Fourier transform is the functor $\fF:\on{Perv}_{G_0}(\Lg_{-1})\to\on{Perv}_{G_0}(\Lg_{1})$ (where we identify $\Lg_1\cong\Lg_{-1}^*$). A character sheaf on $\Lg_1$ is called cuspidal if it does not arise as a direct summand (up to shifts)  via parabolic induction from $\theta$-stable Levi subgroups contained in $\theta$-stable proper parabolic subgroups of $G$.

 In~\cite{LTVX} we show that the cuspidal character sheaves for stably graded Lie algebras are precisely the full support character sheaves and they all arise as simple quotient (or sub) sheaves of $\cP_\chi^{\dag}$, where $\cP_\chi$ are  nearby cycle sheaves associated to nil-supercuspidal data $(\rc,\chi)$ and $\cP_\chi^{\dag}$ is essentially $\fF(\cP_\chi)$, the Fourier transform of $\cP_\chi$. We will follow the strategy in~\cite{GVX,VX2,VX3} to determine $\cP_\chi^\dag$, and consequently determine the cuspidal character sheaves explicitly for stably graded exceptional Lie algebras. 

In more detail, by~\cite{V}, $\Lg_1$ has a Cartan subspace $\fa$ consisting of commuting  semisimple elements such that $\Lg_1\inv G_0\cong\fa/W$, where $$W=N_{G_0}(\fa)/Z_{G_0}(\fa)$$ is the little Weyl group. The group $W$ is in general a complex reflection group.  We  refer to $\dim\fa$ as the rank of the graded Lie algebra.

Let $\Lg_1^{rs}$ denote the set of regular semisimple elements of $\Lg_1$. Let $I=Z_{G_0}(\fa)$. By our stable grading assumption, $\Lg_1^{rs}=\Lg_1\cap\Lg^{rs}$ and $I$ is a finite abelian group. Let $a_0\in\fa^{rs}\subset\Lg_1^{rs}$ and $\bar{a}_0\in\fa^{rs}/W$ the image of $a_0$ under   $\fa\to\fa/W$. We have
\beqn
\widetilde{B}_W:=\pi_1^{G_0}(\Lg_1^{rs},a_0)\cong B_W\ltimes I
\eeqn
where   $B_W:=\pi_1(\fa^{rs}/W,\bar{a}_0)$ is the braid group  (see~\cite{BMR}) associated to the complex reflection group $W$. Let $\hat I=\on{Hom}(I,\bG_m)$, the set of  characters of $I$. The braid group $B_W$ acts on $\hat I$ via the action of $W$.  
By~\cite[Theorem 2.23]{GVX}, we have that
\beqn
\cP_\chi^\dag\cong\on{IC}(\Lg_1^{rs},\cM_\chi)
\eeqn
where $\cM_\chi$ is the $G_0$-equivariant local system on $\Lg_1^{rs}$ given by a specific   representation of $\widetilde{B}_W$. An essential piece of data in describing $\cM_\chi$ is a certain cyclotomic Hecke algebra $\cH_{W_\chi^0}$ associated to a complex reflection subgroup $W_\chi^0$ of $W_\chi:=\on{Stab}_{W}\chi$. By~\cite[Theorem 3.8 and Corollary 3.10]{LTVX}, equivalent nil-supercuspidal data $(\rc,\chi)$ give rise to isomorphic $\cP_\chi^\dag$. In our stable setting, a set of representatives for the equivalence classes of nil-supercuspidal data can be taken as $\{(G_0\cdot a_0,\chi)\mid\chi\in\hat I/W\}$, where $\hat I/W$ is the set of $W$-orbits in $\hat I$. We determine $\hat I/W$ and $\cH_{W_\chi^0}$, $\chi\in\hat I$ explicitly in a case-by-case manner, see Theorem~\ref{mainthm}. As a by-product, we verify that the endoscopic interpretation of the Hecke algebras $\cH_{W_\chi^0}$ in~\cite{VX2} is valid in exceptional types, as expected, see Corollary~\ref{cor-end}.

When the rank  of the graded Lie algebra is at least $2$, as in~\cite{GVX,VX2}, to determine  $\cH_{W_\chi^0}$, we are reduced to study, for each distinguished reflection $s\in W$,  the rank one stable automorphism $\theta|_{(G_s)_{\der}}$ and the restriction of $\chi$ to $I_s=Z_{((G_s)_{\der})^\theta}(\fa)$, where $G_s=Z_G(\fa_s)$. For this we follow an approach of Reeder's~\cite{R} to describe the distinguished reflections  $s\in W$ explicitly, by considering $\theta$-orbits of roots, see Proposition~\ref{prop-distinguished}. For gradings of rank one, we follow the approach from~\cite{VX3} using invariant systems and determine the Hecke algebras via b-function calculations. We do so with the exception of the non-trivial characters in $\hat I$ for the stable $\bZ/3\bZ$-grading of $G_2$ (which is the only case with non-trivial $W$-action on $\hat I$).
In the latter case we determine the Hecke relation needed via an explicit calculation of Fourier transforms of orbital sheaves, see Proposition~\ref{prop-g2-3s}, where we determine the cuspidal orbital sheaves as well.

The paper is organised as follows. In~\S\ref{sec-rec} we collect preliminaries and fix notations. In particular, we introduce notations for the cyclotomic Hecke algebras that appear in our main theorem. In~\S\ref{sec-strategy} we recall the construction from~\cite{GVX} and describe our general strategy to determine the Hecke algebras $\cH_{W_\chi^0}$.  In~\S\ref{sec-main} we state our main results. 
In~\S\ref{sec-reduction} we describe the distinguished reflections in $W$,  determine the rank one stable automorphisms $\theta|_{(G_s)_{\der}}$, and calculate the groups $I_s$. In~\S\ref{sec-rankgt2} we make the results in~\S\ref{sec-reduction} more explicit in each case and use them to  prove the main theorem for stable gradings of rank at least 2. In~\S\ref{sec-rkone} we prove the main theorem for stable gradings of rank one by calculating the relevant b-functions in each case. As explained above, additional argument are provided in the case of non-trivial characters for the stable $\bZ/3\bZ$-grading of $G_2$.

\noindent{\bf Acknowledgements.} I would like to thank George Lusztig for his constant support and encouragement and for the math conversations. I would also like to thank my coauthors, Misha Grinberg, Wille Liu, Cheng-Chiang Tsai and Kari Vilonen  for their collaboration on this project, which originally grew out of a suggestion of Lusztig's. 

\section{Recollections and notations}\label{sec-rec}

From now on let $G$ be a simply connected almost simple exceptional group over $\bC$ and $\theta:G\to G$ a (GIT) stable automorphism of order $m$. Let $n=\on{rank }G$. 

 Let $\zeta_\ell:=\on{exp}(2\pi\mathbf{i}/\ell)$, a fixed primitive $\ell$-th root of unity. Let $\Phi_\ell$ denote the $\ell$-th cyclotomic polynomial and $\mu_\ell$ the cyclic group of order $\ell$.

\subsection{Stable gradings and $\theta$-orbits of roots}\label{ssec-thetaorbits}
Following~\cite{RLYG} we will use the following realisation of $\theta$  (see also~\cite[(2.2)]{VX2}). We fix a Cartan subspace $\fa\subset\Lg_1$ (recall that all Cartan subspaces of $\Lg_1$ are conjugate by $G_0$).  Let $T=Z_G(\fa)$ (which is a split $\theta$-stable maximal torus of $G$) and let $W_G=N_G(T)/T$ be the Weyl group of $G$. We assume that  $\theta=\on{Int}(n_w)\circ\vartheta$ for some lift $n_w\in N_G(T)$ of $w\in W_G$, where $\vartheta\in\on{Out}(G)$ is a pinned automorphism with respect to $T$. 

Suppose that $G$ is of type $X_n$ and that $\theta=\on{Int}(n_w)\circ\vartheta$  with $\on{ord}\vartheta=e$ and $\on{ord}\theta=m$. We will say that $(G,\theta)$ is of type $(^eX_n,m_\rs)$.

Let $\Lt=\on{Lie}T$.  Let $R\subset\Lt^*$ be the set of roots and $\check R=\{\check\alpha\mid\alpha\in R\}$ the set of coroots. Let $\Delta=\{\alpha_i\}$ be a set of simple roots and $\check\Delta=\{\check\alpha_i\}$ the set of simple coroots. Let $\Lg_\alpha\subset \Lg$ denote the root subspace corresponding to $\alpha\in R$. Since $T$ is $\theta$-stable, $\theta$ induces a bijection $R\to R,\alpha\mapsto\theta\alpha$ such that $\theta(\Lg_\alpha)=\Lg_{\theta\alpha}$. We have $\theta\alpha=w(\vartheta(\alpha))$ for $\alpha\in R$.  Since $\theta$ is stable, $w\vartheta$ is an elliptic and regular element of $W_G\vartheta$ by~\cite{RLYG}. It follows that each $\theta$-orbit in $R$ consists of $m$ roots (see~\cite[Proposition 4.10 and Lemma 6.8]{S}). Moreover, we have (\cite{V})
\beq\label{eqn-weyl}
W=W_G^\theta=\{u\in W_G\mid u\circ\theta|_{T}=\theta|_{T}u\}\,.
\eeq
We choose a set of Chevalley basis of $\Lg$ consisting of root vectors $X_\alpha\in\Lg_\alpha$, $\alpha\in R$, and $h_{\alpha_i}\in\Lt$, $\alpha_i\in\Delta$ (see for example~\cite[\S25]{H}). In particular, we have
$[X_{\alpha_i},X_{-\alpha_i}]=h_{\alpha_i}$ and $[X_\alpha,X_{-\alpha}]:=h_\alpha\in{\bZ}\text{-span of }h_{\alpha_i}\,.$

\subsection{Root systems and Coxeter elements}\label{ssec-dynkin} In this subsection we fix the labelling of simple roots in each exceptional type. We also fix a Coxeter element $w_h\in W_G$ which we use later. We write a positive root $\alpha=\sum_{i=1}^na_i\alpha_i$ as $a_1a_2\cdots a_n$ and a negative root $\alpha=-\sum_{i=1}^na_i\alpha_i$ as $-a_1a_2\cdots a_n$. Let $s_\alpha\in W_G$ denote the reflection corresponding to $\alpha\in R$ and let $s_i=s_{\alpha_i}$.

\subsubsection{Type $G_2$}  The extended Dykin diagram is $\xymatrix@C=1em{{\substack{\alpha\\\bullet\\\,}}&{{\substack{\beta\\\bullet\\\,}}}\ar@{-}[r]\ar@3{->}[l]&{{\substack{-\alpha_0\\\circ\\\,}}}}$ where $\alpha_0=3\alpha+2\beta$.

\subsubsection{Type $F_4$} 
The extended Dynkin diagram is
$\xymatrix{{\substack{-\alpha_0\\\circ\\\,}}\ar@{-}[r]&{\substack{\alpha_1\\\bullet\\\,}}\ar@{-}[r]&{\substack{\alpha_2\\\bullet\\\,}}\ar@2{=>}[r]&{\substack{\alpha_3\\\bullet\\\,}}\ar@{-}[r]&{\substack{\alpha_4\\\bullet\\\,}}}
$
where $\alpha_0=2342$. Let 
$
w_h:=s_{\alpha_1}s_{\alpha_2}s_{\alpha_3}s_{\alpha_4}
.$ Then
$
(w_h\alpha_i)_{i=1,\ldots,4}=0100,1120,0001,-1111.
$

\subsubsection{Type $E_6$} 
The extended Dynkin diagram is 
$
\xymatrix@R-1.5pc{{\substack{\alpha_1\\\bullet}}\ar@{-}[r]&{\substack{\alpha_3\\\bullet}}\ar@{-}[r]&{\substack{\alpha_4\\\bullet}}\ar@{-}[r]\ar@{-}[d]&{\substack{\alpha_5\\\bullet}}\ar@{-}[r]&{\substack{\alpha_6\\\bullet}}\\&&{\substack{\bullet\\\alpha_2}}\ar@{-}[d]&&\\&&{\substack{\circ\\-\alpha_0}}&&}
$
where $\alpha_0=\alpha_1+2\alpha_2+2\alpha_3+3\alpha_4+2\alpha_5+\alpha_6$.  
The center of $G$ is
$
Z_G=\langle\check\alpha_1(\xi_3)\check\alpha_3(\xi_3^2)\check\alpha_5(\xi_3)\check\alpha_6(\xi_3^2)\rangle\cong\mu_3.
$

Let  $w_h:=s_{\alpha_1}s_{\alpha_4}s_{\alpha_6}s_{\alpha_3}s_{\alpha_5}s_{\alpha_2}$.
Then
{\small$$(w_h\alpha_i)_{i=1,\ldots,6}= 001100,\,
-010100,\,
-101100,111211,\,
 -000111,\,
000110.
$$}

\subsubsection{Type $E_7$} 
The extended Dynkin diagram is 
\beqn
\xymatrix{{\substack{-\alpha_0\\\circ}}\ar@{-}[r]&{\substack{\alpha_1\\\bullet}}\ar@{-}[r]&{\substack{\alpha_3\\\bullet}}\ar@{-}[r]&{\substack{\alpha_4\\\bullet}}\ar@{-}[r]\ar@{-}[d]&{\substack{\alpha_5\\\bullet}}\ar@{-}[r]&{\substack{\alpha_6\\\bullet}}\ar@{-}[r]&{\substack{\alpha_7\\\bullet}}\\&&&{\substack{\bullet\\\alpha_2}}&&}
\eeqn
where $\alpha_0=2234321$. 
The center of $E_7$ is
$
Z_G=\langle\check\alpha_2(-1)\check\alpha_5(-1)\check\alpha_7(-1)\rangle\cong\mu_2.
$

Let $w_h=s_1  s_4  s_3  s_5 s_7  s_6  s_2$. Then {\small$$(w_h\alpha_i)_{i=1,\ldots,7}=0011000,-0101000,-1011000,1112110,  -0001110,0001111,-0000011.$$}

\subsubsection{Type $E_8$} 
The extended Dynkin diagram is 
\beqn
\xymatrix{{\substack{\alpha_1\\\bullet}}\ar@{-}[r]&{\substack{\alpha_3\\\bullet}}\ar@{-}[r]&{\substack{\alpha_4\\\bullet}}\ar@{-}[r]\ar@{-}[d]&{\substack{\alpha_5\\\bullet}}\ar@{-}[r]&{\substack{\alpha_6\\\bullet}}\ar@{-}[r]&{\substack{\alpha_7\\\bullet}}\ar@{-}[r]&{\substack{\alpha_8\\\bullet}}\ar@{-}[r]&{\substack{-\alpha_0\\\circ}}\\&&{\substack{\bullet\\\alpha_2}}&&}
\eeqn
where $\alpha_0=23465432$. We let $\alpha_1=\frac{1}{2}(e_1+e_8-\sum_{i=2}^7e_i),\,\alpha_2=e_1+e_2,\,\alpha_i=e_{i-1}-e_{i-2},\,i=3,\ldots,8.$ 

Let $w_h=s_1s_4s_6s_8s_3s_2s_5s_7$. Then {\tiny$$(w_h\alpha_i)_{i=1,\ldots,8}=00110000,-01010000,-10110000,11121100,-00011100,00011111,-00000111,00000110.$$}

\subsection{Cyclotomic Hecke algebras associated to complex reflection groups}
In this subsection we fix notations for cyclotomic Hecke algebras associated to complex reflection groups, see~\cite[\S4]{BMR} (see also~\cite[\S2.2]{VX2}).

Let $C\subset GL(V)$ be a finite complex reflection group (see~\cite{ST,Co}). Recall that the reflection hyperplanes $H$ of $(C,V)$ are in bijection with distinguished reflections $s_H\in C$. In particular, the stabiliser group $C_H$ of $H$ in $C$ is generated by $s_H$ and $\det s_H=\zeta_{|C_H|}$.

The cyclotomic Hecke algebras associated to $C$ (see~\cite[Definition 4.21]{BMR}) are certain quotient algebras of the group algebra $\bC[B_C]$ of  the associated braid group $B_C$.   
In view of~\cite[Proposition 4.22]{BMR} and~\cite{E} (and the references therein), to specify a Hecke algebra associated to $C$, it suffices to specify the Hecke relations associated to braid generators $\sigma_s$ of distinguished reflections $s$.

Let us write 
$\cH_{C,p(z)}$ for the Hecke algebra with Hecke relations $p(\sigma_s)=0$. When the distinguished reflections form 2 conjugacy classes in $C$, we write $\cH_{C,p_1(z),p_2(z)}$ for the Hecke algebra with Hecke relations $p_1(\sigma_s)=0$, $p_2(\sigma_t)=0$ for  distinguished reflections $s$ in one  conjugacy class and $t$ in the other. For example, $\cH_{\mu_n,p(z)}\cong\bC[z]/(p(z))$.

For Weyl groups, we write $S_n$ for symmetric groups, $W_n$, resp. $W_n'$, for  Weyl groups of type $B_n/C_n$, resp. $D_n$, and $W_{G_2}, W_{F_4}, W_{E_n}$ for Weyl groups of type $G_2,F_4,E_n$. For a Weyl group $W$, we will also write
$
\cH_{W,-1}=\cH_{W,\Phi_1^2}.
$
For the infinite families $G_{m,1,r}:=S_r\ltimes(\mu_m)^r$ and $G_{2m_0,2,r}$ (an index 2 subgroup of $G_{2m_0,1,r}$) we will also write, as in~\cite[\S7.1]{VX2}, $$\cH^{\ell}G_{m,1,r}=\cH_{G_{m,1,r},\Phi_1^2,\Phi_1^\ell\Phi_2^{m-2\ell}},\qquad \cH^{m_0}G_{2m_0,2,r}=\cH_{G_{2m_0,1,r},\Phi_1^2,\Phi_1^{m_0}}.$$

\section{General strategy}\label{sec-strategy}

In this section we describe our general strategy. As explained in the introduction, to determine the cuspidal character sheaves, we are reduced to determine the local systems $\cM_\chi$ that arises from $\cP_\chi^\dag\cong\on{IC}(\Lg_1^{rs},\cM_\chi)$, and the $W$-orbits in $\hat I$, where $I=Z_{G_0}(\fa)$.

Let us first briefly recall the local systems $\cM_\chi$. We refer the readers to~\cite{GVX} and~\cite[\S2]{VX2} for details. Let $a_0\in\fa^{rs}\subset\Lg_1^{rs}=\Lg_1\cap\Lg^{rs}$. We have
\beqn
\widetilde{B}_W:=\pi_1^{G_0}(\Lg_1^{rs},a_0)\cong B_W\ltimes I
\eeqn
where  $B_W$ is the braid group associated to the complex reflection group $W=N_{G_0}(\fa)/Z_{G_0}(\fa)$. This arises from the following split short exact sequence
\beqn
1\to I\to\widetilde{B}_W\xrightarrow{\tilde q} B_W\to 1\,
\eeqn
where we have chosen a Kostant slice as in~\cite[\S2]{VX2} to split the sequence.

Let $\chi\in\hat I$. We associate to $\chi$ the following subgroups of $W$ (see~\cite[Definition 2.3 and (5.3)]{VX2})
\begin{align*}
&W_{\chi}=\on{Stab}_W(\chi)\\& W_\chi^0\subset W_\chi\text{ the complex reflection subgroup generated by the reflections in $W_\chi$}\\
&W_\chi^{\on{en}}\subset W_\chi^0\text{ the endoscopic subgroup}\,.
\end{align*} 
Here the $W_\chi^{\on{en}}$ is by construction the little Weyl group of a stable order $m$ automorphism on the group $\check{G}(\chi)^0$, where we regard $\chi$ as an element in the dual group $\check G$ (see~\cite[(5.1)]{VX2}) and $\check{G}(\chi)^0$ is the identity component of the fixed point group $(\check G)^{\chi}$.

Let $p:B_W\to W$ be the map given by mapping $\fa^{rs}$ to a point (as in~\cite{GVX1,GVX}). Let
\begin{align*}
&B_W^{\chi}=p^{-1}(W_\chi),\ B_W^{\chi,0}=p^{-1}(W_\chi^0),\quad\widetilde{B}_W^\chi=\tilde{q}^{-1}(B_W^\chi),\ \widetilde{B}_W^{\chi,0}=\tilde{q}^{-1}(B_W^{\chi,0})\,. 
\end{align*}
Applying~\cite[Theorem 2.23]{GVX} (cf.~\cite[Theorem 3.8]{LTVX}), we have that
\beqn
\cP_\chi^\dag\cong\on{IC}(\Lg_1^{rs},\cM_\chi)
\eeqn
where $\cM_\chi$ is the $G_0$-equivariant local system corresponding to the following  representation of $\widetilde{B}_W$:
\beq\label{eqn-nearby}
\cM_\chi=\left(\bC[\widetilde{B}_W]\otimes_{\bC[\widetilde B_{W}^{\chi,0}]}(\bC_\chi\otimes\bC_{\rho}\otimes\cH_{W_\chi^0})\right)\otimes\bC_{\tau}
\eeq
and $\cH_{W_\chi^0}$ is a Hecke algebra associated to the complex reflection group $W_\chi^0$. In particular $\dim\cH_{W_\chi^0}=|W_\chi^0|$. We refer the readers to {\em loc.cit.} for the precise definition of $\cM_\chi$. We will check (in a case-by-case manner) that in our stable grading setting, the characters $\rho$ and $\tau$ are trivial (see~Lemma~\ref{lem-tau} and Lemma~\ref{lem-rho}). Thus to determine $\cP_\chi^\dag$, we are reduced to determine $\cH_{W_\chi^0}$.

\subsection{Stable gradings of rank 1}Suppose that $\dim\fa=1$. In this case, we have $W=\langle s\rangle$ is a cyclic group and $B_W\cong\bZ$. Let $\chi\in\hat I$. Suppose that $W_\chi=W_\chi^0=\langle s^\ell\rangle$, where $\ell||W|$. We have
\beq\label{eqn-poly-rank1}
\cH_{W_\chi^0}=\cH_{W_\chi^0,R_\chi(z)}\cong\bC[z]/(R_\chi(z))
\eeq
where $R_\chi(z)\in \bC[z]$ is a monic polynomial of degree $|W|/\ell$.

The stable gradings of rank 1, where $m$ equals the (twisted) Coxeter number, have been studied in~\cite{VX2}. We will follow the strategy in~\cite[\S4]{VX3} (in particular, Theorem 4.3 of {\em loc.cit.}) to calculate the polynomials $R_\chi(z)$ for the remaining stable gradings of rank 1 (under the assumption that $W_\chi=W$ for all $\chi\in\hat I$). The last assumption holds for all cases except for the non-trivial characters of $I$ in type $(G_2,3_\rs)$.

\subsection{Stable gradings of rank at least $2$.}\label{ssec-str-red}

Suppose now $\dim\fa\geq 2$. Recall~\cite{GVX} (see also~\cite[\S2.5]{VX2}) that to determine $\cH_{W_\chi^0}$, we are reduced to the case of (semisimple) rank one as follows. 
For each distinguished reflection $s\in W$, let $\fa_s\subset\fa$ denote the hyperplane fixed by $s$. Let 
$$G_s=Z_G(\fa_s)\text{ and $(G_s)_{\on{der}}\subset G_s$  the derived subgroup.}$$
 Then $(G_s)_{\on{der}}$ is simply connected (see~\cite[\S8.4.6 (6)]{S2}). Moreover, $\theta|_{(G_s)_{\on{der}}}$ induces a stable grading of rank one on $(\Lg_s)_\der:=\on{Lie}(G_s)_{\on{der}}$ with little Weyl group $W_s=\langle s\rangle$. Let  
\begin{align*}
&G_{s,0}:=((G_s)_{\on{der}})^\theta,\ \Lg_{s,1}:=(\Lg_s)_\der\cap\Lg_1\\
&n_s=|W_s|,\ W_{s,\chi}:=W_s\cap W_\chi=\langle s^{e_s}\rangle,\ e_s=|W_s|/|W_{s,\chi}|\,.
 \end{align*}
 Let $\fa_s^\perp\subset \Lg_{s,1}$ denote the Cartan subspace of the induced grading on $(\Lg_s)_\der$ such that $\fa_s\oplus\fa_s^\perp=\fa$. Let
\beq\label{eqn-local}
 I_s:=Z_{G_{s,0}}(\fa_s^\perp)=Z_{G_{s,0}}(\fa)\subset I,\ \chi_s=\chi|_{I_s}\text{ and }W_{s,\chi_s}=\on{Stab}_{W_s}(\chi_s)\,.
 \eeq
In~\cite[\S2.7]{GVX} we associate a monic polynomial $R_{\chi,s}\in\bC[z]$ of degree $n_s$ to the rank one polar representation $(G_{s,0},\Lg_{s,1})$ and the character $\chi_s\in\hat I_s$. It is shown in {\em loc.cit.} that there exists a polynomial $\bar{R}_{\chi,s}\in\bC[z]$ such that 
\beqn
R_{\chi,s}(z)=\bar{R}_{\chi,s}(z^{e_s}).
\eeqn
The Hecke relations defining the Hecke algebra $\cH_{W_\chi^0}$ are  given by the polynomials $\bar{R}_{\chi,s}$ (since $s^{e_s}$ are the distinguished reflections in $W_\chi^0$). In fact, one can check that we have $R_{\chi,s}(z)=R_{\chi_s}(z^{\ell_s})$ where $\ell_s=|W_s|/|W_{s,\chi_s}|$ and $R_{\chi_s}(z)$ is the monic polynomial  defined as in~\eqref{eqn-poly-rank1}   for the rank one stably graded Lie algebra $(\Lg_s)_\der$ and $\chi_s=\chi|_{I_s}$.  

Thus we are reduced to determine, for each distinguished reflection $s\in W$, the characters $\chi_s\in\hat{I_s}$, and the monic polynomials $R_{\chi_s}$ associated to the rank one stable automorphisms  $\theta|_{(G_s)_{\on{der}}}$. 

We will show (see Proposition~\ref{prop-rankone}) that the rank one stable automorphisms  $\theta|_{(G_s)_{\on{der}}}$ can be identified as one of the following types (here we include the data for $I_s$ and $R_{\chi_s}$ from~\cite[\S6]{VX2} for later use)
\begin{longtable}[c]{p{5.5cm}p{2.5cm}p{6cm}}
\hline\hline
Type&$I_s$&$R_{\chi_s}$
\\\hline\hline
$(A_{n-1},n_\rs),\ n=2,3,4,5$&$\mu_{n}$&$(z^d-1)^{{n}/{d}}$ if $\chi_s$ is of order $d$\\
\hline
$(^2A_{2n},(4n+2)_\rs)$, $n=1,2$&$1$&$(z-1)^{n+1}(z+1)^n$\\
\hline
$(B_2,4_\rs)$&$\mu_2$&$(z-1)^3(z+1)$ if $\chi_s$ trivial\\
&& $(z^2-1)^2$ if $\chi_s$ non-trivial\\
\hline
$(^2D_4,8_\rs)$&$\mu_2$&$(z-1)^4$ if $\chi_s$ trivial\\
&& $(z^2-1)^2$ if $\chi_s$ non-trivial\\
\hline
$(^3D_4,12_\rs)$&$1$&$(z-1)^3(z+1)$\\
\hline\hline
\caption{Reduction to rank one}
\label{table-rd1}
\end{longtable}

\subsubsection{Stable $\bZ/2\bZ$-gradings}\label{ssec-stable2} Suppose $\theta$ is stable of order $2$. Then $\theta$ is an involution and $(G,G_0)$ is a split symmetric pair. 
In this case, we have $W=W_G$ and $\dim\fa=n=\on{rank}G$. So the reflections in $W$ are $s_\alpha$, $\alpha\in R$.  By~\cite{GVX1,VX}), we have
\begin{align*}
&I=\langle\check\alpha(-1)\mid\alpha\in R\rangle\cong\mu_2^n\qquad W_\chi^0=\langle s_\alpha\mid \chi(\check\alpha(-1))=1\rangle\qquad\text{and }\quad\cH_{W_\chi^0}=\cH_{W_\chi^0,-1}.
\end{align*}
By~\cite[\S3.3]{VX}, the groups $W_\chi$ (resp. $W_\chi^0$) can be  identified with the Weyl group of the fixed point group $\check{G}^\chi$ (resp. of the identity component $(\check{G}^\chi)^0)$, where $\check{G}$ is the dual group of $G$ and $\chi$ is regarded as an order $2$ element in $\check G$. This allows us to obtain the results for the gradings $2_\rs$ from results for symmetric pairs (see for example~\cite{S3}). In particular, we have $W_\chi/W_\chi^0\cong\check{G}^\chi/(\check{G}^\chi)^0$. This implies that $W_\chi=W_\chi^0$ except when $G=E_7$, and $W_\chi^0\cong S_8$ or $W_{E_6}$. In the latter cases we have
$W_\chi/W_\chi^0\cong\mu_2$. 
\begin{remark}
For $\bZ/2\bZ$-stable gradings, $W_\chi^0$ in~\cite{GVX1,VX} coincides with $W_\chi^{\en}$ in~\cite{VX2}.
\end{remark}

\section{Main results}\label{sec-main}
In this section we state our main results, that is, we determine $\hat I/W$, $\cP_\chi^\dag$, $\chi\in\hat I/W$ and thus the cuspidal character sheaves for stably graded exceptional Lie algebras (except that the answer in one case for $(G_2,3_\rs)$ is still conjectural). 
We write $\chi_0$ for the trivial character of $I$. We use notations from~\S\ref{sec-rec} and~\S\ref{sec-strategy}.

 \subsection{Nearby cycle sheaves} 
 Let $a_0\in\fa^{rs}$, $I=Z_{G_0}(a_0)=Z_{G_0}(\fa)$, and $\chi\in\hat I$. 
 Let $\cP_\chi$ be the nearby cycle sheaf associated to the nil-supercuspidal datum $(G_0.a_0,\chi)$, see~\cite[\S3]{LTVX}.
 \begin{theorem}\label{mainthm}Let $(G,\theta)$ be  stable, where $G$ is a simply connected almost simple exceptional group. 
 We have
 \beqn
 \{\cP_\chi^\dag\mid\chi\in\hat I/W\}=\{\on{IC}(\Lg_1^{rs},\cM_\chi)\mid\chi\in\hat I/W\}
 \eeqn
 where $$\cM_\chi=\bC[\widetilde{B}_W]\otimes_{\bC[\widetilde B_{W}^{\chi,0}]}(\bC_\chi\otimes\cH_{W_\chi^0}).$$
 The representatives $\chi\in \hat I/W$, the groups $W_\chi$ and the Hecke algebras $\cH_{W_\chi^0}$ are given in the following tables

\FloatBarrier
\begin{longtable}[c]{p{1.8cm}p{1cm}p{1cm}p{2cm}p{4cm}p{2cm}p{1.1cm}}
\hline
\hline
$(G,\theta)$&$I$&$\chi$&$W_{\chi}$&$\cH_{W_{\chi}^0}=\cH_{W_\chi}$&$\check{G}(\chi)^0$\\\hline
\hline
$(G_2,2_\rs)$&$\mu_2^2$&$\chi_0$&$W_{G_2}$&$\cH_{W_{G_2},-1}$&\\
&&$\chi_1$&$S_2\times S_2$&$\cH_{S_2,-1}\otimes \cH_{S_2,-1}$&$A_1^2$\\\hline
$(G_2,3_\rs)$&$\mu_3$&$\chi_0$&$\mu_6$&$\cH_{\mu_6,\Phi_1^3\Phi_2\Phi_6}$\\
&&$\chi_1$&$\mu_3$&$\cH_{\mu_3,\Phi_1^3}$&$A_2$\\\hline
$(G_2,6_\rs)$&$1$&$\chi_0$&$\mu_6$&$\cH_{\mu_6,\Phi_1^3\Phi_2\Phi_3}$\\\hline
\hline
$(^3D_4,3_\rs)$&$\mu_3^2$&$\chi_0$&$G_4$&$\cH_{G_4,\Phi_1^3}$&\\
&&$\chi_1$&$\mu_3$&$\cH_{\mu_3,\Phi_1^3}$&$A_2$\\\hline
$(^3D_4,6_\rs)$&$1$&$\chi_0$&$G_4$&$\cH_{G_4,\Phi_1^2\Phi_2}$\\\hline
$(^3D_4,12_\rs)$&$1$&$\chi_0$&$\mu_{4}$&$\cH_{\mu_{4},\Phi_1^3\Phi_2}$\\\hline
\hline
$(F_4,2_\rs)$&$\mu_2^4$&$\chi_0$&$W_{F_4}$&$\cH_{W_{F_4},-1}$&\\
&&$\chi_1$&$W_3\times S_2$&$\cH_{W_3,-1}\otimes\cH_{S_2,-1}$&$C_3\times A_1$\\
&&$\chi_2$&$W_4$&$\cH_{W_4,-1}$&$B_4$\\\hline
$(F_4,3_\rs)$&$\mu_3^2$&$\chi_0$&$G_5$&$\cH_{G_5,\Phi_1^3}$&\\
&&$\chi_1$&$\mu_3\times\mu_3$&$\cH_{\mu_3^2,\Phi_1^3}$&$A_2^2$\\\hline
$(F_4,4_\rs)$&$\mu_2^2$&$\chi_0$&$G_8$&$\cH_{G_8,\Phi_1^3\Phi_2}$&\\
&&$\chi_1$&$G_{4,1,2}$&$\cH^{3}G_{4,1,2}$&$B_4$\\\hline
$(F_4,6_\rs)$&$1$&$\chi_0$&$G_5$&$\cH_{G_5,\Phi_1^2\Phi_2}$\\\hline
$(F_4,8_\rs)$&$\mu_2$&$\chi_0$&$\mu_8$&$\cH_{\mu_8,\Phi_1^5\Phi_2\Phi_4}$\\
&&$\chi_1$&$\mu_8$&$\cH_{\mu_8,\Phi_1^5\Phi_2^3}$&$B_4$\\\hline
$(F_4,12_\rs)$&$1$&$\chi_0$&$\mu_{12}$&$\cH_{\mu_{12},\Phi_1^5\Phi_2^3\Phi_3\Phi_4}$\\\hline
\hline
$(^2E_6,2_\rs)$&$\mu_2^6$&$\chi_0$&$W_{E_6}$&$\cH_{W_{E_6},-1}$&\\
&&$\chi_1$&$S_6\times S_2$&$\cH_{S_6,-1}\otimes\cH_{S_2,-1}$&$A_5\times A_1$\\
&&$\chi_2$&$W_5'$&$\cH_{W_5',-1}$&$D_5$\\\hline
$(^2E_6,4_\rs)$&$\mu_4^2$&$\chi_0$&$G_8$&$\cH_{G_8,\Phi_1^4}$&\\
&&$\chi_1$&$G_{4,1,2}$&$\cH^{4}G_{4,1,2}$&$D_5$\\
&&$\chi_2$&$\mu_4\times\mu_2$&$\cH_{\mu_4,\Phi_1^4}\otimes\cH_{\mu_2,\Phi_1^2}$&$A_3\times A_1^2$\\\hline
$(^2E_6,6_\rs)$&$1$&$\chi_0$&$G_{25}$&$\cH_{G_{25},\Phi_1^2\Phi_2}$\\\hline
$(^2E_6,12_\rs)$&$1$&$\chi_0$&$\mu_{12}$&$\cH_{\mu_{12},\Phi_1^5\Phi_2^5\Phi_6}$\\\hline
$(^2E_6,18_\rs)$&$1$&$\chi_0$&$\mu_{9}$&$\cH_{\mu_{9},\Phi_1^5\Phi_2^2\Phi_3}$\\\hline
\hline
$(E_8,2_\rs)$&$\mu_2^8$&$\chi_0$&$W_{E_8}$&$\cH_{W_{E_8},-1}$\\
&&$\chi_1$&$W_8'$&$\cH_{W_8',-1}$&$D_8$\\
&&$\chi_2$&$W_{E_7}\times S_2$&$\cH_{W_{E_7},-1}\otimes\cH_{S_2,-1}$&$E_7\times A_1$\\\hline
$(E_8,3_\rs)$&$\mu_3^4$&$\chi_0$&$G_{32}$&$\cH_{G_{32},\Phi_1^3}$\\
&&$\chi_1$&$G_{25}\times\mu_3$&$\cH_{G_{25},\Phi_1^3}\otimes\cH_{\mu_3,\Phi_1^3}$&$E_6\times A_2$\\\hline
$(E_8,4_\rs)$&$\mu_2^4$&$\chi_0$&$G_{31}$&$\cH_{G_{31},\Phi_1^2}$\\
&&$\chi_1$&$G_{4,2,4}$&$\cH^2G_{4,2,4}$&$D_8$\\\hline
$(E_8,5_\rs)$&$\mu_5^2$&$\chi_0$&$G_{16}$&$\cH_{G_{16},\Phi_1^5}$\\
&&$\chi_1$&$\mu_5^2$&$\cH_{\mu_5^2,\Phi_1^5}$&$A_4^2$\\\hline
$(E_8,6_\rs)$&$1$&$\chi_0$&$G_{32}$&$\cH_{G_{32},\Phi_1^2\Phi_2}$\\[5pt]\hline
$(E_8,8_\rs)$&$\mu_2^2$&$\chi_0$&$G_{9}$&$\cH_{G_{9},\Phi_1^4,\Phi_1^2}$\\
&&$\chi_1$&$G_{8,2,2}$&$\cH^4G_{8,2,2}$&$D_8$\\\hline
$(E_8,10_\rs)$&$1$&$\chi_0$&$G_{16}$&$\cH_{G_{16},\Phi_1^3\Phi_2^2}$\\[5pt]\hline
$(E_8,12_\rs)$&$1$&$\chi_0$&$G_{10}$&$\cH_{G_{10},\Phi_1^2\Phi_2,\Phi_1^3\Phi_2}$\\[5pt]\hline
$(E_8,15_\rs)$&$1$&$\chi_0$&$\mu_{30}$&$\cH_{\mu_{30},\Phi_1^9\Phi_2^9\Phi_3\Phi_5\Phi_6^3}$\\[5pt]\hline
$(E_8,20_\rs)$&$1$&$\chi_0$&$\mu_{20}$&$\cH_{\mu_{20},\Phi_1^9\Phi_2^5\Phi_4\Phi_5}$\\[5pt]\hline
$(E_8,24_\rs)$&$1$&$\chi_0$&$\mu_{24}$&$\cH_{\mu_{24},\Phi_1^9\Phi_2^5\Phi_3^3\Phi_4\Phi_6}$\\[5pt]\hline
$(E_8,30_\rs)$&$1$&$\chi_0$&$\mu_{30}$&$\cH_{\mu_{30},\Phi_1^9\Phi_2^5\Phi_3^3\Phi_4^2\Phi_5\Phi_6}$\\[5pt]\hline\hline
\caption{Types $G_2,{}^3D_4, F_4,{}^2E_6,E_8$}
\label{tab-equal}
\end{longtable}

\FloatBarrier
\begin{longtable}[c]{p{1.5cm}p{1cm}p{1.7cm}p{2.2cm}p{3.5cm}p{1.5cm}p{1.7cm}}
\hline
\hline
$(G,\theta)$&$I$&$\chi$&$W_{\chi}$&$\cH_{W_{\chi}^0}$&$\check{G}(\chi)^0$&$\cH_{W_{\chi}^{\on{en}}}$\\\hline
\hline
$(E_6,3_\rs)$&$\mu_3^3$&$\chi_0$&$G_{25}$&$\cH_{G_{25},\Phi_1^3}$\\
&&$\chi_1,\chi_3$&$\mu_3\ltimes G_{4}$&$\cH_{G_4,\Phi_1^3}$&$^3D_4$&\\
&&$\chi_2$&$\mu_3\ltimes (\mu_3^3)$&$\cH_{\mu_3^3,\Phi_1^3}$&$A_2^3$&\\\hline
$(E_6,6_\rs)$&$Z_G$&$\chi_0$&$G_5$&$\cH_{G_5,\Phi_1^2\Phi_2,\Phi_1^3}$\\
&&$\chi_1,\chi_2$&$G_5$&$\cH_{G_5,\Phi_1^2\Phi_2,\Phi_1\Phi_3}$&$^3D_4$&$\cH_{G_4,\Phi_1^2\Phi_2}$\\\hline
$(E_6,9_\rs)$&$Z_G$&$\chi_0$&$\mu_{9}$&$\cH_{\mu_{9},\Phi_1^7\Phi_3}$\\
&&$\chi_1,\chi_2$&$\mu_{9}$&$\cH_{\mu_{9},\Phi_1^3\Phi_3^3}$&$A_2^3$&$\cH_{\mu_3,\Phi_1^3}$\\\hline
$(E_6,12_\rs)$&$Z_G$&$\chi_0$&$\mu_{12}$&$\cH_{\mu_{12},\Phi_1^7\Phi_2^3\Phi_3}$\\
&&$\chi_1,\chi_2$&$\mu_{12}$&$\cH_{\mu_{12},\Phi_1^3\Phi_2\Phi_3^3\Phi_6}$&$^3D_4$&$\cH_{\mu_4,\Phi_1^3\Phi_2}$\\\hline
\hline
$(E_7,2_\rs)$&$\mu_2^7$&$\chi_0$&$W_{E_7}$&$\cH_{W_{E_7},-1}$\\
&&$\chi_1$&$\langle\tau_1\rangle\ltimes S_8$&$\cH_{S_8,-1}$&$^2A_7$\\
&&$\chi_2$&$W_6'\times S_2$&$\cH_{W_6',-1}\otimes\cH_{S_2,-1}$&$^2D_6\times A_1$\\
&&$\chi_3$&$\langle\tau_3\rangle\ltimes W_{E_6}$&$\cH_{W_{E_6},-1}$&$^2E_6$\\\hline
$(E_7,6_\rs)$&$Z_G$&$\chi_0$&$G_{26}$&$\cH_{G_{26},\Phi_1^2,\Phi_1^2\Phi_2}$\\
&&$\chi_1$&$G_{26}$&$\cH_{G_{26},\Phi_1\Phi_2,\Phi_1^2\Phi_2}$&$^2E_6$&$\cH_{G_{25},\Phi_1^2\Phi_2}$\\\hline
$(E_7,14_\rs)$&$Z_G$&$\chi_0$&$\mu_{14}$&$\cH_{\mu_{14},\Phi_1^8\Phi_2^4\Phi_4}$\\
&&$\chi_1$&$\mu_{14}$&$\cH_{\mu_{14},\Phi_1^5\Phi_2^5\Phi_4^2}$&$^2A_7$&$\cH_{\mu_{7},\Phi_1^5\Phi_2}$\\\hline
$(E_7,18_\rs)$&$Z_G$&$\chi_0$&$\mu_{18}$&$\cH_{\mu_{18},\Phi_1^8\Phi_2^4\Phi_3^2\Phi_4}$\\
&&$\chi_1$&$\mu_{18}$&$\cH_{\mu_{18},\Phi_1^5\Phi_2^5\Phi_4^2\Phi_3\Phi_6}$&$^2E_6$&$\cH_{\mu_{9},\Phi_1^5\Phi_2^2\Phi_3}$\\\hline\hline
\caption{Types $E_6, E_7$}
\label{tab-e6/e7}
\end{longtable}
 \end{theorem}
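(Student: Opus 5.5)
The proof will be a case-by-case verification organised along the strategy of \S\ref{sec-strategy}. Formula \eqref{eqn-nearby} (from \cite[Theorem 2.23]{GVX}) already gives $\cP_\chi^\dag\cong\on{IC}(\Lg_1^{rs},\cM_\chi)$ with the characters $\rho$ and $\tau$, and \cite[Theorem 3.8, Corollary 3.10]{LTVX} reduces the statement to representatives $\chi\in\hat I/W$. Accordingly I will carry out three tasks. \textbf{(i)} Show that $\rho$ and $\tau$ are trivial in the stable setting. For $\tau$ this is a check on how $I$ and the Kostant-slice splitting interact; for $\rho$ it comes from the parity of the dimensions of the $\theta$-orbits of roots relevant to each distinguished reflection. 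Since every $\theta$-orbit of roots has exactly $m$ elements, I expect a uniform computation there. \textbf{(ii)} Compute $I=Z_{G_0}(\fa)=T^\theta\cap G_0$ and its $W$-orbits. Using $\theta=\on{Int}(n_w)\circ\vartheta$ with $w\vartheta$ elliptic regular, we have $I=T^{w\vartheta}$, which is isomorphic to the torsion of $\check X/(1-w\vartheta)\check X$. I will compute this from the explicit Coxeter elements $w_h$ of \S\ref{ssec-dynkin} and their powers, since regular elements are powers of twisted Coxeter elements. By \eqref{eqn-weyl} we have $W=W_G^{w\vartheta}$, and its action on $\hat I$ is then explicit, which yields the orbit representatives and the stabilisers $W_\chi$. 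For $m=2$ everything is read off from symmetric-pair data via \S\ref{ssec-stable2}: $W_\chi$ is identified with the Weyl group of $\check G^\chi$, and $\cH=\cH_{W_\chi^0,-1}$.

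\textbf{(iii)} Determine the Hecke relations when $\dim\fa\ge2$ by the rank-one reduction of \S\ref{ssec-str-red}. Following Reeder, each distinguished reflection $s\in W$ corresponds to a $\theta$-stable rank-one sub-root system: the roots vanishing on $\fa_s$ form a union of $\theta$-orbits. This identifies $((G_s)_\der,\theta)$ with one of the types in Table~\ref{table-rd1}, and it identifies $I_s\subset I$ as the subgroup generated by the corresponding coroot values. For each conjugacy class of distinguished reflections I will compute $\chi_s=\chi|_{I_s}$ and $\ell_s$. Reading $R_{\chi_s}$ from Table~\ref{table-rd1} then gives $R_{\chi,s}(z)=R_{\chi_s}(z^{\ell_s})=\bar R_{\chi,s}(z^{e_s})$. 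From this I obtain $W_\chi^0$ as the group generated by the reflections $s^{e_s}$, and the relations $\bar R_{\chi,s}$, and I will match them with the named algebras in Tables~\ref{tab-equal}, \ref{tab-e6/e7}. As a cross-check, $W_\chi^0$ and $W_\chi^{\en}$ should agree with the little Weyl group of $\check G(\chi)^0$.

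For rank one, where $W=\mu_{|W|}$, I will use \cite[Theorem 4.3]{VX3}. Pick an invariant system on $\Lg_1$, compute the $b$-function of the basic invariant $f$ restricted to the slice, and read off $R_\chi$ from the roots of $b_f$, twisted by $\chi$ through the $I$-action on the relevant eigenspaces. The twisted Coxeter cases are already covered by \cite{VX2}. The main obstacle is \textbf{(a)} the large rank-one cases $(E_8,15_\rs),(E_8,20_\rs),(E_8,24_\rs),(E_7,14_\rs),(E_7,18_\rs)$ and others, where the degree of the invariant is large. There I would try to factor the $b$-function via a prehomogeneous or Capelli-type structure on the slice, and then use the constraint $\deg R_\chi=|W|$ to pin down the multiplicities. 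The other hard point is \textbf{(b)} $(G_2,3_\rs)$ with $\chi\ne\chi_0$, where $W_\chi\ne W$ and Theorem 4.3 of \cite{VX3} does not apply. There I will compute the Fourier transforms of the $G_0$-equivariant orbital sheaves on $\cN_{-1}$ directly (the nilpotent orbits are few). I will identify which IC sheaves have full support and extract the relation $\Phi_1^3$ for $\cH_{\mu_3}$ from the monodromy around the discriminant.
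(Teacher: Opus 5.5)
Your overall architecture matches the paper's. You use symmetric-pair data for $m=2$ (\S\ref{ssec-stable2}), the Reeder-style partition of $R$ into $\theta$-stable subsystems $S_i$ together with Table~\ref{table-rd1} for rank $\geq 2$, \cite{VX2} for the (twisted) Coxeter cases, \cite[Theorem 4.3]{VX3} for the remaining rank-one cases, and a direct sheaf-theoretic argument for $(G_2,3_\rs)$, $\chi_1$. The genuine gap is in the rank-one step for nontrivial $\chi$, namely the cases $(F_4,8_\rs)$, $(E_6,9_\rs)$ and $(E_7,14_\rs)$. To apply \cite[Theorem 4.3]{VX3} you must write $\chi=\psi_{s_\bullet}|_I$ for a genuine character $\psi_{s_\bullet}=\prod_i\psi_i^{s_i}\in X^*(G_0)$. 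Here the $\psi_i$ are the characters of the fundamental semi-invariants $f_i$ of the prehomogeneous space $(G_0\times\bC^*,\Lg_1)$, which you obtain from the Kac diagram. Doing this means working out $Z_{G_0}$, the integrality conditions on $s_\bullet$, and the value of $\psi_{s_\bullet}$ on a generator of $I$. Then $R_\chi=b_{\exp}$ for the $b$-function defined by $D_f\bigl(f^s\prod_if_i^{-s_i}\bigr)=b(s)f^{s-1}\prod_if_i^{-s_i}$ on all of $\Lg_1$. This cannot be obtained by twisting the roots of $b_f$ by values of $\chi$. In $(F_4,8_\rs)$ the answer changes from $\Phi_1^5\Phi_2\Phi_4$ to $\Phi_1^5\Phi_2^3$, and in $(E_7,14_\rs)$ from $\Phi_1^8\Phi_2^4\Phi_4$ to $\Phi_1^5\Phi_2^5\Phi_4^2$. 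Multiplying roots by values of an order-$2$ character preserves the number of roots equal to $\pm\zeta_4$, so no such twist produces either change. The constraint $\deg R_\chi=|W|$ does not fix multiplicities either, since it holds automatically. The multiplicities come from exact formulas: the two-variable formula \eqref{eqn-bfn-1}, the Capelli-type identities \eqref{eqn-diff-2}, Shintani's result for binary cubics, and Ukai's computation for $(E_8,15_\rs)$.

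Several smaller points also need fixing.
(1) The triviality of $\rho$ is not a uniform statement about $\theta$-orbit sizes. By its definition in \cite[(2.66)]{GVX}, $\rho$ is trivial once $\bar R_{\chi,s}=z-1$ for every distinguished reflection $s$ with $e_s=n_s$. That has to be read off from your rank-one polynomials, e.g. $R_{\chi_s}=z^n-1$ for $(A_{n-1},n_\rs)$ with $\chi_s$ of order $n$.
(2) $\tau$ is the character $x\mapsto\det(x|_{\Lg_1})$ of $I$, not a Kostant-slice issue. The paper writes it as $\prod_{\alpha\in R/\sim}\alpha(x)$ over $\theta$-orbits. This equals $1$ when $I=Z_G$, or when $\alpha$ and $-\alpha$ lie in different orbits (as for $m=3,5$); the remaining cases are checked on generators of $I$.
(3) Not every regular element is a power of a (twisted) Coxeter element, since $4$ and $8$ do not divide $30$. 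So for $(E_8,4_\rs)$ and $(E_8,8_\rs)$ you need other representatives; the paper uses an explicit element of type $D_8(a_3)$ and its square.
(4) Your cross-check fails as stated. For $E_6$ with $m\neq 3$ and for $E_7$ with $m\neq 2$, and $\chi\neq\chi_0$, one has $W_\chi^0\neq W_\chi^{\en}$, with index $3$, respectively $2$.
(5) For $(G_2,3_\rs)$ the paper does not compute monodromy directly. It shows that $6$ of the $11$ simple $G_0$-equivariant perverse sheaves on $\cN_{-1}$ are bi-orbital, using $\fF(\pi_i)_*\bC\cong(\check\pi_i)_*\bC[-]$ for $B_0$-stable subspaces $E_i$ with $E_i^\perp\subset\cN_1$. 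Hence exactly $5$ are cuspidal, and $4$ of these come from $\cP_{\chi_0}^\dag$. So $\cH_{W_{\chi_1}}$ has a unique irreducible representation, from which the paper concludes the relation is $\Phi_1^3$.
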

\begin{remark}
We can check (using tables in~\cite[\S8.6-\S8.8]{TX} and Ennola duality) that in each case the Hecke algebra $\cH_{W}$ coincides with $\cH_{\mathbb{T},1}^{\mathbb{G}}(1)$, specialisation at $x=1$ of the cyclotomic Hecke algebra $\cH_{\mathbb{T},1}^{\mathbb{G}}(x)$ attached by Brou\'e-Malle~\cite{BM}  to the principal $\Phi_m$ Harish-Chandra series~\cite{BMM} of Brou\'e-Malle-Michel, see~\cite[\S1]{TX}.
\end{remark}

\begin{remark} We note that in~\cite{RLYG} the group  is assumed to be adjoint. So our $I$ differs from those in {\em loc.cit.} by $Z_G$, the center of $G$, in types $E_6,E_7$.
The data of $W_{\chi_0}=W$ is taken from~\cite{RLYG}, where we use notations of the Shephard-Todd classification~\cite{ST}.
\end{remark}

Here we included the statement from~\cite[\S6]{VX2} for the cases when $m$ is the Coxeter number or twisted Coxeter number. In Table~\ref{tab-equal} and Table~\ref{tab-e6/e7} we also include data on endoscopic groups $\check G(\chi)^0$ and, when $W_\chi^0\neq W_\chi^{en}$, the Hecke algebras $\cH_{W_\chi^{\en}}$ (see~\cite[\S5]{VX2}) associated to $W_\chi^{\en}$. Recall that $W_\chi^{\en}$ is the little Weyl group associated to the stable order $m$ automorphism of $\check G(\chi)^0$. This allows us to verify the endoscopic interpretation of the Hecke algebras $\cH_{W_\chi^0}$, that is, the expectations~\cite[(5.5a) and (5.5b)]{VX2} and Theorem 5.2 of {\em loc.cit.} hold in each case.  Let us summarize this as the following 
\begin{corollary} \label{cor-end}
Let $G$ be a simply connected almost simple algebraic group over $\bC$ and $\theta:G\to G$ an order $m$ stable automorphism (where $(G,\theta)\neq(G_2,3_\rs)$). Let $\chi\in \hat I$ be a nontrivial character and $\check G(\chi)^0$ the corresponding endoscopic group.  
Let $\cH_{W_\chi^{\en}}$ be the Hecke algebra associated to the trivial character for the stable order $m$ automorphism of $\check G(\chi)^0$. Then we have
\beqn
\cM_\chi\cong\bC[\widetilde{B}_W]\otimes_{\bC[\widetilde{B}_W^{\chi,\en}]}(\bC_\chi\otimes\cH_{W_\chi^{\en}})
\eeqn
where $B_W^{\chi,\on{en}}=p^{-1}(W_\chi^{\on{en}})$ and $\widetilde{B}_W^{\chi,\on{en}}=\tilde{q}^{-1}(B_W^{\chi,\on{en}})\,.$
\end{corollary}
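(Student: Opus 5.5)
The corollary follows from Theorem~\ref{mainthm} by a case-by-case comparison; no new geometry is needed. First I dispose of the cases with $W_\chi^0=W_\chi^{\en}$. There $\widetilde B_W^{\chi,0}=\widetilde B_W^{\chi,\en}$, so the claim says that $\cH_{W_\chi^0}$ equals the Hecke algebra attached to the trivial character of the stable order $m$ automorphism of $\check G(\chi)^0$.

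This comparison is read off from the last columns of the tables. For instance, for $(F_4,4_\rs)$ with $\chi_1$ we have $\check G(\chi)^0=B_4$ with its stable grading of order $4$. By~\cite[\S7]{VX2}, whose classical results are recorded in our notation $\cH^{\ell}G_{m,1,r}$, the trivial-character Hecke algebra there is $\cH^3G_{4,1,2}$. The same lookup handles $\bZ/2\bZ$-gradings via the Remark in \S\ref{ssec-stable2}, and all products of type $A$ factors. For factors of type $A_k$ the rank one table (Table~\ref{table-rd1}) gives $\Phi_1^{k+1}$-type relations matching $\cH_{\mu_{k+1},\Phi_1^{k+1}}$. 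For exceptional endoscopic groups ($E_6\times A_2$, $E_7\times A_1$, ${}^3D_4$, ${}^2E_6$) the comparison uses the $\chi_0$ rows of the same tables.

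The remaining cases are the $E_6$, $E_7$ rows of Table~\ref{tab-e6/e7} with $W_\chi^{\en}\subsetneq W_\chi^0$: $(E_6,m_\rs)$ for $m=6,9,12$ and $(E_7,m_\rs)$ for $m=6,14,18$. In each, $W_\chi^{\en}$ is a normal reflection subgroup of $W_\chi^0$ with cyclic quotient. In rank one it is $\langle s^k\rangle\subset\langle s\rangle$; for $G_{25}\subset G_{26}$ and $G_4\subset G_5$ the quotient is $\mu_2$ or $\mu_3$. I would prove the isomorphism
\[
\bC[\widetilde B_W^{\chi,0}]\otimes_{\bC[\widetilde B_W^{\chi,\en}]}(\bC_\chi\otimes\cH_{W_\chi^{\en}})\cong\bC_\chi\otimes\cH_{W_\chi^0}
\]
and then induce up to $\widetilde B_W$ by transitivity of induction.

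In rank one this is elementary. If $W_\chi^0=\mu_N$ and $W_\chi^{\en}=\mu_{N/k}$ with relation $q(z)$, then $\Ind$ from $\bC[z^k]/(q(z^k))$ to $\bC[z]$ is $\bC[z]/(q(z^k))$. So one only checks the polynomial identities $R_\chi(z)=q(z^k)$, e.g. $\Phi_1^3\Phi_3^3=(z^3-1)^3$ for $(E_6,9_\rs)$, and similarly $\Phi_1^3\Phi_2\Phi_3^3\Phi_6=(z^3-1)^3(z^3+1)$ for $m=12$. These are direct cyclotomic factorisations of the entries in the tables.

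For $G_4\subset G_5$ and $G_{25}\subset G_{26}$ the same reduction applies one distinguished reflection at a time, by the local description in \S\ref{ssec-str-red}. For reflections of $W_\chi^{\en}$ the relations agree. For the other class (e.g. $\Phi_1\Phi_3$ for $G_5$, $\Phi_1\Phi_2$ for $G_{26}$), the induced module is governed by the relation $z^{k}$ generating the cyclic quotient. I would use the standard fact for Hecke algebras of such semidirect situations (cf.~\cite{BMR}): inducing from a reflection subgroup of index $k$, generated by $k$-th powers of a reflection class, gives the Hecke algebra whose relation on that class is the trivial one in $z^k$, i.e.\ $z^k-1=\prod_{d\mid k}\Phi_d$. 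This matches $\Phi_1\Phi_3$ for $k=3$ and $\Phi_1\Phi_2$ for $k=2$.

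The main obstacle is making this last step rigorous. The induced module has the right dimension $|W_\chi^0|$, but one must identify it as a Hecke algebra quotient of $\bC[B_{W_\chi^0}]$ via the braid generators. I would do this through the local rank one reduction of~\cite{GVX}, comparing the polynomials $R_{\chi,s}$ with those induced from the endoscopic side.
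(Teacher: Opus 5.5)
Your proposal is essentially the paper's argument: the paper obtains the corollary by the same case-by-case comparison, recording $\check G(\chi)^0$ and $\cH_{W_\chi^{\en}}$ in Tables~\ref{tab-equal} and~\ref{tab-e6/e7} and checking them against the expectations (5.5a), (5.5b) and Theorem~5.2 of \cite{VX2}, without spelling out the induction step you single out as the main obstacle. That step is elementary once stated correctly, and one table entry needs correcting:
\begin{enumerate}
\item For $G_4\subset G_5$ and $G_{25}\subset G_{26}$, the group $W_\chi^{\en}$ is generated by the other reflection class and meets $\langle t\rangle$ trivially; it is not generated by $k$-th powers. So $\sigma_t^k$ is a meridian around a hyperplane not in the arrangement of $W_\chi^{\en}$, hence maps to $1$ in $B_{W_\chi^{\en}}$. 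Normality of $p^{-1}(W_\chi^{\en})$ in $p^{-1}(W_\chi^0)$ then makes the induced module a quotient of $\bC[B_W^{\chi,0}]$ by a two-sided ideal.
\item When you check $R_\chi(z)=q(z^2)$ for $(E_7,14_\rs)$, use $q=\Phi_1^5\Phi_2^2$. The entry $\Phi_1^5\Phi_2$ in Table~\ref{tab-e6/e7} has degree $6\neq|\mu_7|$ and is a misprint.
\end{enumerate}
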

\begin{remark}
It would be interesting to have a uniform proof of this endoscopic interpretation.
\end{remark}
We prove Theorem~\ref{mainthm} by a case-by-case analysis following the strategy described in~\S\ref{sec-strategy}. In particular, the rank $\geq 2$ cases occupy~\S\ref{sec-reduction} and~\S\ref{sec-rankgt2}, and the rank one cases are dealt with in~\S\ref{sec-rkone}.  The triviality of the characters $\tau$ and $\rho$ in~\eqref{eqn-nearby} are summarized in the two lemmas below (making use of the case-by-case analysis).

\begin{lemma}\label{lem-tau}
The character $\tau:I\to\{\pm1\}, x\mapsto \on{det}(x|_{\Lg_1})$ is trivial.
\end{lemma}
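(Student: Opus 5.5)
We need to show that $\det(x|_{\Lg_1})=1$ for all $x\in I$. Since $I\subset T$ and $T$ acts on $\Lg_1$ diagonally, decompose $\Lg_1=\fa\oplus\bigoplus_{\mathcal O}\Lg_{1,\mathcal O}$, where $\mathcal O$ runs over the $\theta$-orbits in $R$ and $\Lg_{1,\mathcal O}$ is the $\zeta_m$-eigenspace of $\theta$ on $\bigoplus_{\alpha\in\mathcal O}\Lg_\alpha$. Each $\theta$-orbit has exactly $m$ roots (\S\ref{ssec-thetaorbits}), so $\theta$ permutes $\bigoplus_{\alpha\in\mathcal O}\Lg_\alpha$ cyclically up to scalars, and $\theta^m=1$ there. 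Hence each $\Lg_{1,\mathcal O}$ is one-dimensional, spanned by $\sum_{k}\zeta_m^{-k}\theta^k(X_\alpha)$.

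An element $x\in I\subset T^\theta$ commutes with $\theta$, so $\alpha(x)=(\theta\alpha)(x)$ for all $\alpha$, and $x$ acts on $\Lg_{1,\mathcal O}$ by the common scalar $\alpha(x)$, $\alpha\in\mathcal O$. It acts trivially on $\fa$. Thus
\[
\tau(x)=\prod_{\mathcal O}\alpha_{\mathcal O}(x).
\]
Since $-1$ is an eigenvalue of $w\vartheta$ only in special cases, I would pair orbits differently. The set of orbits is stable under $\alpha\mapsto-\alpha$. Moreover $\mathcal O\neq-\mathcal O$ unless $-\alpha=\theta^k\alpha$ for some $k$, which forces $m$ even and $k=m/2$. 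If $\mathcal O\neq-\mathcal O$, the pair contributes $\alpha(x)\alpha(x)^{-1}=1$. If $\mathcal O=-\mathcal O$, then $\alpha(x)=(-\alpha)(x)$, so $\alpha(x)=\pm1$. Hence $\tau(x)=\prod_{\mathcal O=-\mathcal O}\alpha_{\mathcal O}(x)$, a product of signs, and this recovers the claim that $\tau$ takes values in $\{\pm1\}$.

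It remains to show this product of signs equals $1$, and this is the main obstacle. For $m$ odd there are no self-opposite orbits, so $\tau=1$ immediately; this covers $3_\rs$, $5_\rs$, $9_\rs$ and $15_\rs$. Likewise $\tau=1$ whenever $I=1$, which disposes of most remaining rank-one cases in the tables. For $m=2$, every orbit $\{\alpha,-\alpha\}$ is self-opposite, and $\tau(x)=\prod_{\alpha>0}\alpha(x)=(2\rho)(x)$. For $x=\check\beta(-1)$ this equals $(-1)^{\langle2\rho,\check\beta\rangle}=1$, since $\langle\rho,\check\beta\rangle\in\bZ$, and these elements generate $I$ by \S\ref{ssec-stable2}. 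For general even $m$, I would attempt the same argument. The self-opposite orbits $\mathcal O=-\mathcal O$ split into $\{\alpha,-\alpha\}$ pairs, so $\tau(x)=\prod_{\alpha\in R^+_{\mathrm{so}}}\alpha(x)$ over a choice of half $R^+_{\mathrm{so}}$ of the self-opposite roots, counting one root per orbit. One then evaluates this on generators of $I$ as listed in the case-by-case sections. For the remaining cases with nontrivial $I$ and $m$ even, namely $(F_4,4_\rs)$, $(F_4,8_\rs)$, $({}^2E_6,4_\rs)$, $(E_8,4_\rs)$, $(E_8,8_\rs)$, $(E_6,6_\rs)$, $(E_6,12_\rs)$, $(E_7,6_\rs)$, $(E_7,14_\rs)$ and $(E_7,18_\rs)$, I would check this directly. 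When $I=Z_G$ this is automatic, since central elements act trivially on $\Lg$. For the others, I would compute $\alpha(x)$ explicitly using the realisation $\theta=\on{Int}(n_w)\vartheta$ with $w$ a power of the Coxeter element $w_h$ or a similar element, together with the explicit generators of $I$ from \S\ref{sec-reduction}.
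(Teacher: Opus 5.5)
Your proposal is correct and is essentially the paper's proof. The shared steps are the formula $\det(x|_{\Lg_1})=\prod_{\mathcal O}\alpha_{\mathcal O}(x)$ over $\theta$-orbits, cancellation of orbits with $\mathcal O\neq-\mathcal O$ (which settles odd $m$), triviality when $I=1$ or $I=Z_G$, and a direct check on explicit generators of $I$ in the few remaining even-$m$ cases, which the paper likewise leaves to the reader except $(F_4,8_\rs)$, done via the explicit action in the rank-one section. Your uniform $2\rho$ argument for $m=2$ is a small improvement, since the paper folds those cases into the case-by-case check.
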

\begin{proof}
It is clear that the lemma holds when $I=Z_G$.  For the other cases, as in~\cite[Proof of Lemma 7.11]{VX2}, for $x\in I=T^\theta$, we have $\on{det}(x|_{\Lg_1})=\prod_{\alpha\in R/\sim}\alpha(x)$, where $\alpha\sim\beta$ if they are in the same $\theta$-orbit. This implies that in particular, if $\alpha$ and $-\alpha$ are not in the same orbit, then $\on{det}(x|_{\Lg_1})=1$. This is the case when $m=3$ or $5$. We can check the remaining cases using the generators of $I$ described in~\S\ref{sec-rankgt2} and~\S\ref{sec-rkone}.
\end{proof}
\begin{lemma}\label{lem-rho}
The character $\rho$ in~\eqref{eqn-nearby} is trivial.
\end{lemma}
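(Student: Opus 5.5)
To prove Lemma~\ref{lem-rho} I would first recall from~\cite{GVX} (see also~\cite[\S2]{VX2}) how $\rho$ is defined. It is a character of $\widetilde B_W^{\chi,0}$ (equivalently of $B_W^{\chi,0}$, trivial on $I$). It records the monodromy of the determinant line of the relevant Morse/Picard--Lefschetz data along the braid generators $\sigma_s$ of distinguished reflections. Concretely, for each distinguished reflection $s\in W$, $\rho(\sigma_s)$ is a sign determined by the rank one situation $(G_{s,0},\Lg_{s,1})$. It can be expressed as a determinant of the action on $\Lg_1/\fa$ in the directions normal to $\fa_s$, or as the parity of $(\dim\Lg_{s,1}-1)$ (equivalently of $\dim(\Lg_s)_\der\cap\Lg_1$ minus the rank). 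Since $B_W$ is generated by conjugates of the $\sigma_s$, and $B_W^{\chi,0}$ by conjugates of $\sigma_s^{e_s}$, it suffices to show that $\rho(\sigma_s^{e_s})=1$ for every distinguished reflection $s$. This reduces the lemma to the rank one stable gradings.

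\textbf{Rank at least two.} I would invoke Proposition~\ref{prop-rankone}, which lists the possible types of $\theta|_{(G_s)_\der}$ as in Table~\ref{table-rd1}: $(A_{n-1},n_\rs)$ for $n=2,\dots,5$, $(^2A_{2n},(4n+2)_\rs)$, $(B_2,4_\rs)$, $(^2D_4,8_\rs)$ and $(^3D_4,12_\rs)$. For each type I would compute the local sign. In every one of these rank one stable gradings, each $\theta$-orbit of roots has exactly $m$ elements and contributes exactly one line to $\Lg_{s,1}$. So $\dim\Lg_{s,1}=|R_s|/m$, which equals the rank of $(G_s)_\der$ divided by the order of $\vartheta$ plus appropriate corrections; this gives $\dim\Lg_{s,1}=n_s$-type counts. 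The required parity can then be read off directly. Equivalently, these rank one cases are exactly the (twisted) Coxeter cases treated in~\cite[\S6]{VX2}, where $\rho$ was shown to be trivial. I would cite that result for each type appearing in Table~\ref{table-rd1}, so that $\rho(\sigma_s)=1$ for every $s$.

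\textbf{Rank one.} Here $W=\langle s\rangle$ and $\rho$ is determined by its value on the single generator. I would handle the cases listed in~\S\ref{sec-rkone} one by one. The (twisted) Coxeter ones again follow from~\cite{VX2}. For the remaining ones ($(G_2,3_\rs)$, $(^3D_4,6_\rs)$-type rank one cases, $(E_6,9_\rs)$, $(E_7,14_\rs)$, and so on, whichever have rank one), I would compute the sign from the explicit description of $\Lg_1$ via $\theta$-orbits of roots used in the b-function computations. The sign is the parity of the number of orbits, excluding the Cartan direction, on which the generator acts by an odd permutation.

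The main obstacle is pinning down the precise formula for $\rho$ from~\cite{GVX} in a checkable combinatorial form (a parity attached to $\dim\Lg_{s,1}$ and to the action of $\sigma_s$). Once that is fixed, the verification is a finite table check against Table~\ref{table-rd1} and the rank one list. If the dimension-parity formula does not immediately give triviality, my fallback is the equivalent criterion from~\cite{VX2}: $\rho$ is trivial exactly when the computed polynomials $R_{\chi_s}$ have the form stated, with no sign twist $z\mapsto -z$. I would then confirm this in each case from the b-function calculations.
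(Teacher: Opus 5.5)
There is a genuine gap: the proposal never pins down what $\rho$ is. The definitions you guess are not the one in \cite[(2.66)]{GVX}; they include a sign given by a determinant on $\Lg_1/\fa$ normal to $\fa_s$, a parity of $\dim\Lg_{s,1}$, and a parity of permutations of $\theta$-orbits. They look more like the character $\tau$ of Lemma~\ref{lem-tau}. So the parity checks you plan, in rank $\geq 2$ and in rank one, would not prove the lemma.

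Citing that $\rho$ is trivial for the rank one gradings $\theta|_{(G_s)_\der}$ of \cite[\S6]{VX2} also does not transfer to $G$. For $(G_s)_\der$ triviality is automatic, since $W_s$ fixes $\chi_s$ in every row of Table~\ref{table-rd1}. For $G$, however, the relevant stabiliser is $W_s\cap W_\chi$, which can be trivial.

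The missing idea is which reflections $\rho$ sees. Recall $R_{\chi,s}(z)=\bar R_{\chi,s}(z^{e_s})$.
\begin{itemize}
\item When $e_s<n_s$, $s^{e_s}$ is a reflection of $W_\chi^0$ and $\bar R_{\chi,s}$ is a Hecke relation of $\cH_{W_\chi^0}$; $\rho$ plays no role there.
\item When $e_s=n_s$, i.e.\ $W_s\cap W_\chi=1$, no nontrivial power of $s$ lies in $W_\chi^0$. The loop $\sigma_s^{n_s}$ lies in $B_W^{\chi,0}$ but gives no Hecke relation, and $\bar R_{\chi,s}(z)=z-c_s$ is linear. The character $\rho$ is built from these constants $c_s$, and it is trivial once $c_s=1$ for all such $s$.
\end{itemize}
This is the paper's one-line proof: $\bar R_{\chi,s}=z-1$ whenever $e_s=n_s$.

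It can be read off from data you already have. If $e_s=n_s$ then $R_{\chi,s}(z)=z^{n_s}-c_s$. The only entries of Table~\ref{table-rd1} of that shape are $z^n-1$ for $(A_{n-1},n_\rs)$ with $\chi_s$ of order $n$. This matches the polynomials $z^2-1$, $z^3-1$, $z^4-1$, $z^5-1$ found for the $t_i\notin W_\chi$ in \S\ref{sec-rankgt2}.

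In rank one there is nothing to check. There $W_\chi\neq 1$ for every $\chi$, including $\chi_1$ for $(G_2,3_\rs)$, where $W_{\chi_1}=\langle s^2\rangle$, so $e_s<n_s$ always. Your rank-one plan of extracting parities from the b-function data is therefore both unnecessary and aimed at the wrong quantity.

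Your fallback criterion (``no sign twist $z\mapsto -z$'') points in the right direction but is not the right condition. What must be checked is the constant term $c_s$ of $z^{n_s}-c_s$, and only for those $s$ with $e_s=n_s$.
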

\begin{proof}
This follows from the definition of $\rho$, see~\cite[(2.66)]{GVX}, by observing that we have $\bar R_{\chi,s}=z-1$ when $e_s=n_s$.
\end{proof}

 \subsection{Cuspidal character sheaves}
Recall the subgroups $W_\chi^{\en}\subset W_\chi^0\subset W_\chi\subset W$ associated to $\chi\in\hat I$. The following  propositions follow from our case-by-case analysis.

\begin{proposition}
\hfill\begin{enumerate}
\item The group $I$ is generated by the subgroups $I_s$ (see~\eqref{eqn-local}), $s\in W$ distinguished reflections.
\item A distinguished reflection $s\in W_\chi^{\en}$ if and only if $\chi|_{I_s}=1$.
\end{enumerate}
\end{proposition}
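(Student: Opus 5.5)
The plan is to verify both parts case by case, using the explicit data assembled in the later sections: the description of distinguished reflections via $\theta$-orbits of roots (Proposition~\ref{prop-distinguished}), the identification of the rank one automorphisms $\theta|_{(G_s)_{\der}}$ (Proposition~\ref{prop-rankone}, Table~\ref{table-rd1}), and explicit generators of $I=T^\theta$ and of each $I_s$.

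For part (1), take $I=T^\theta$ (here $T=Z_G(\fa)$, so $I=T\cap G_0$). For each distinguished reflection $s$, the group $(G_s)_\der$ is simply connected with maximal torus $T_s=T\cap (G_s)_\der$. Hence $I_s=T_s^\theta$ is generated by values of coroots $\check\alpha$ for roots $\alpha$ of $G_s$, and these roots are the $\theta$-orbits vanishing on $\fa_s$. Proposition~\ref{prop-distinguished} lists these orbits, and Table~\ref{table-rd1} gives $I_s$ as $\mu_n$, $\mu_2$ or $1$ with an explicit generator of the form $\prod_{\beta\in\text{orbit}}\check\beta(\zeta)$. So it suffices to check, case by case, that these elements generate $I$.

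\begin{itemize}
\item When $I=1$ there is nothing to prove.
\item When $I=Z_G$ (types $E_6,E_7$ with $m$ large), it suffices to find one $s$ with $I_s\ni$ a generator of $Z_G$. For instance, take $(G_s)_\der$ of type $A_2$ or $A_1$ whose $I_s=\mu_3$ or $\mu_2$ maps onto $Z_G$; this can be checked on the coroot expressions for $Z_G$ given in \S\ref{ssec-dynkin}.
\item For $m=2$, we have $I=\langle\check\alpha(-1)\rangle$ and $I_{s_\alpha}=\langle\check\alpha(-1)\rangle$, so the claim is immediate.
\item The remaining cases ($m=3,4,5,8$ and ${}^3D_4$, ${}^2E_6$) are finite checks using the generators listed in \S\ref{sec-rankgt2} and \S\ref{sec-rkone}.
\end{itemize}

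In rank one, $W=\langle s\rangle$ and $G_s=G$, so $I_s=I$ and the claim is trivial.

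For part (2), I would compare the list of reflections with $\chi|_{I_s}=1$ against $W_\chi^{\en}$, the little Weyl group of the stable order-$m$ automorphism of $\check G(\chi)^0$, recorded in Tables~\ref{tab-equal} and~\ref{tab-e6/e7}. Conceptually, $\chi|_{I_s}$ trivial should mean that the root subsystem of $G_s$ (or its dual) survives in $\check G(\chi)^0$, and then $s$ is a reflection of the endoscopic group. I would make this precise by using the duality $I\cong$ ($\theta$-coinvariants of the dual torus)$^\vee$, with $\chi\in\check T$: the condition $\chi(\prod\check\beta(\zeta))=1$ over the $\theta$-orbit of $\beta$ should say that the dual root $\check\beta$ evaluated on $\chi$ is $1$, i.e.\ that this dual root lies in $\check G(\chi)^0$.

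Two points need care:
\begin{itemize}
\item In rank one, $W_\chi^{\en}$ is a proper subgroup $\langle s^{k}\rangle$ (e.g.\ $(E_6,9_\rs)$), so the only distinguished reflection is $s$ itself. There the statement reduces to $\chi=1$ iff $W_\chi^{\en}=W$, which is read off from the tables.
\item The hard case is rank $\ge2$ with $\chi$ nontrivial, for instance $(E_6,3_\rs)$ and $(E_6,6_\rs)$, where $W_\chi^0\ne W_\chi^{\en}$ or $W_\chi\neq W_\chi^0$. There one must check which conjugacy classes of distinguished reflections have $\chi_s$ trivial, matching the computation of $\cH_{W_\chi^0}$ (where $\chi_s$ nontrivial produces non-$\Phi_1$ factors in $R_{\chi_s}$). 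The cross-check is that the reflections with $\chi_s=1$ generate a subgroup of order $|W_\chi^{\en}|$ of the stated type.
\end{itemize}

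The main obstacle is this bookkeeping of orbit-coroot products against $\chi$. I would rely on the explicit $I_s$ generators from \S\ref{sec-rankgt2}.
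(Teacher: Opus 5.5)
Your rank $\geq 2$ strategy matches the paper. That strategy is a finite check of both parts against the explicit $t_i$, $I_{t_i}=\langle\nu_i\rangle$ and the $W$-action on $\hat I$ listed in \S\ref{sec-rankgt2}. The paper says only that the proposition ``follows from our case-by-case analysis'', using remarks such as ``$\chi|_{I_{t_i}}=1$ iff $t_i\in W_\chi^0$'' and ``$\langle t_i\mid\chi_1|_{I_{t_i}}=1\rangle\cong G_4\cong W_\chi^{\en}$''.

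The genuine problem is your rank-one paragraph for part (2). You correctly note that in rank one $\fa_s=0$, so $G_s=G$ and $I_s=I$. Part (2) then reads ``$s\in W_\chi^{\en}$ iff $\chi=1$'', and you justify it by asserting that $W_\chi^{\en}$ is a proper subgroup $\langle s^k\rangle$ whenever $\chi\neq 1$. That is false for $(F_4,8_\rs)$:
\begin{enumerate}
\item There $I=\mu_2$, $\chi_1\neq 1$, and $\check G(\chi_1)^0$ is of type $B_4$.
\item The stable order-$8$ automorphism of $B_4$ is the Coxeter one, with little Weyl group $\mu_8$.
\item Hence $W_{\chi_1}^{\en}=W_{\chi_1}^0=W_{\chi_1}=W\cong\mu_8$. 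This agrees with the paper's proposition listing when $W_\chi$, $W_\chi^0$, $W_\chi^{\en}$ differ, which has no exception for $F_4$.
\item It also agrees with $\cH_{W_{\chi_1}}=\cH_{\mu_8,\Phi_1^5\Phi_2^3}$, whose relation is not a polynomial in $z^k$ for any $k>1$.
\end{enumerate}
So the distinguished reflection $s$ lies in $W_{\chi_1}^{\en}$ although $\chi_1|_{I_s}=\chi_1\neq 1$. Reading off from the tables produces a counterexample, not a verification.

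Part (2) therefore cannot be established in rank one with $I_s:=I$. It has to be read as in the paper, where $I_s$ is defined by \eqref{eqn-local} only in the setting $\dim\fa\geq 2$ of \S\ref{ssec-str-red}. In rank one the endoscopic subgroup shows up in the shape of $R_\chi$, not in $\chi|_{I_s}$. For example, for $(E_6,9_\rs)$ one has $R_{\chi_1}=(z^3-1)^3$, matching $W_{\chi_1}^{\en}=\langle s^3\rangle$.

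The same example shows that your heuristic ``$\chi|_{I_s}=1$ iff the root subsystem attached to $s$ survives in $\check G(\chi)^0$'' cannot be turned into a general argument: $B_4\subsetneq F_4$, yet $s\in W_{\chi_1}^{\en}$. For rank $\geq 2$ the equivalence rests on the explicit data, and there your finite check is exactly what the paper does. Remove the rank-one claim for (2) and the proposal agrees with the paper; the trivial rank-one case of (1) is harmless.
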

\begin{proposition}
 We have that $W_{\chi}=W_{\chi}^0=W_{\chi}^{\on{en}}$ except the following cases
\hfill\begin{enumerate}[itemsep=.5em]
\item $W_\chi^0=W_{\chi}^{\on{en}}$ and $W_\chi/W_\chi^0\cong \mu_3$  when $(G,\theta)=(E_6,3_\rs)$ and $\chi\neq \chi_0$ 
\item $W_{\chi}= W_\chi^0$ and $W_{\chi}^0/W_{\chi}^{\on{en}}\cong\mu_3$ when $G=E_6,$ $\theta\neq3_\rs$ and $\chi\neq \chi_0$

\item $W_\chi^0=W_{\chi}^{\on{en}}$ and $W_\chi/W_\chi^0\cong\mu_2$  when $(G,\theta)=(E_7,2_\rs)$ and $\chi=\chi_1$ or $\chi_3$ 
\item $W_{\chi}= W_\chi^0$ and $W_{\chi}^0/W_{\chi}^{\on{en}}\cong\mu_2$ when $G=E_7$, $\theta\neq2_\rs$ and $\chi\neq \chi_0$\,. 
\end{enumerate}

\end{proposition}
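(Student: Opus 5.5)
The plan is to read the statement off the case-by-case data in Theorem~\ref{mainthm}, using the structural description of $W_\chi^0$ and $W_\chi^{\en}$ from the preceding proposition. By that proposition, $W_\chi^{\en}$ is generated by the distinguished reflections $s$ with $\chi|_{I_s}=1$. Also, $W_\chi^0$ is generated by the reflections lying in $W_\chi$, namely the powers $s^{e_s}$ where $e_s=|W_s|/|W_{s,\chi}|$. So in each case I will compare three things: $\on{Stab}_W(\chi)$; the subgroup generated by the $s^{e_s}$; and the subgroup generated by those $s$ with $\chi_s$ trivial.

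\textbf{Cases where $I$ is trivial, or $W$ acts trivially and all local characters behave uniformly.} When $I=1$ there is only $\chi_0$, and all three groups equal $W$. When $\chi=\chi_0$ they also all equal $W$. For stable $\bZ/2\bZ$-gradings, \S\ref{ssec-stable2} and the remark following it give $W_\chi^0=W_\chi^{\en}$. The same subsection gives $W_\chi/W_\chi^0\cong\check G^\chi/(\check G^\chi)^0$, which is nontrivial only for $E_7$ with $\chi_1,\chi_3$, where it is $\mu_2$; this is item (3). For the remaining types $G_2,{}^3D_4,F_4,{}^2E_6,E_8$ with $m>2$, I will check from the explicit lists of distinguished reflections and of the $I_s$ (Proposition~\ref{prop-distinguished} and \S\ref{sec-rankgt2}) that the following hold whenever $\chi|_{I_s}\neq1$:
\begin{enumerate}
\item $W_{s,\chi}$ is trivial, i.e.\ $s$ contributes no reflection to $W_\chi$, so $W_\chi^0=W_\chi^{\en}$;
\item $W_\chi$ is itself generated by reflections. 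This is seen by comparing the order of the stabiliser, computed from the orbit size in $\hat I/W$, with the order of the reflection subgroup listed in Table~\ref{tab-equal}.
\end{enumerate}
In the rank one cases this reduces to the fact that $W$ acts trivially on $\hat I$ (except for $(G_2,3_\rs)$, which is excluded from the endoscopic statement). One then checks that $\chi|_{I}=\chi_s$ is nontrivial forces $e_s=1$ only when $I_s$ is a central subgroup on which $W$ acts trivially.

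\textbf{Types $E_6$ and $E_7$ with $I\supseteq Z_G$.} For $\theta\neq 3_\rs$ in $E_6$ (respectively $\theta\neq2_\rs$ in $E_7$), $I=Z_G$ is central. Hence $W$ fixes every $\chi$, and $W_\chi=W$ is a reflection group, so $W_\chi=W_\chi^0$. On the other hand, $W_\chi^{\en}$ is the little Weyl group of the stable order $m$ automorphism of the endoscopic group $\check G(\chi)^0$, which is ${}^3D_4$ or $A_2^3$ (respectively ${}^2E_6$ or ${}^2A_7$). Comparing orders via the tables gives the indices: $|G_5|/|G_4|=3$, $9/3$, $12/4$ for $E_6$, and $|G_{26}|/|G_{25}|=2$, $14/7$, $18/9$ for $E_7$. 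This yields items (2) and (4). For $(E_6,3_\rs)$ with $\chi\neq\chi_0$, the tables give $W_\chi=\mu_3\ltimes W_\chi^0$. Here the $\mu_3$ factor permutes the components, or acts by triality on ${}^3D_4$, and is not generated by reflections. For every $s$ one has $e_s\in\{1,3\}$, and $e_s=3$ exactly when $\chi_s\neq1$, since $I_s\cong\mu_3$ and $W_s\cong\mu_3$ acts on it. Hence $W_\chi^0=W_\chi^{\en}$, which is item (1).

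\textbf{Main obstacle.} The hardest step is the verification, for each $E_6,E_7,E_8,F_4$ grading of rank at least $2$, that a distinguished reflection $s$ with $\chi_s\neq1$ never has a nontrivial power fixing $\chi$. In other words, one must show that $W_s$ acts freely on the nontrivial characters of $I_s$, except in the central $E_6/E_7$ cases. I would settle this using the explicit generators of $I_s$ as $\check\alpha(\zeta)$ for $\theta$-orbits of roots, as in \S\ref{sec-reduction}, together with the rank one table. The trivial-character entries of that table differ from the nontrivial ones precisely in this respect.
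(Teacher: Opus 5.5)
\textbf{The gap: your description of $W_\chi^{\on{en}}$ is wrong in several cases.}

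Three parts of your proposal are sound, and they are how the paper's tables are meant to be read:
\begin{enumerate}
\item the $\bZ/2\bZ$ gradings;
\item the cases $I=Z_G$ in $E_6$ and $E_7$, where you compute $W_\chi^{\on{en}}$ as the little Weyl group of the stable grading on $\check G(\chi)^0$ and compare orders;
\item the $(E_6,3_\rs)$ case, provided it is rejustified as noted in the last paragraph.
\end{enumerate}

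The problem is in the remaining types. There you take $W_\chi^{\on{en}}$ to be generated by the distinguished reflections $s$ with $\chi|_{I_s}=1$. You then need that, whenever $\chi|_{I_s}\neq1$, no nontrivial power of $s$ fixes $\chi$ (your ``$W_s$ acts freely'' obstacle). Both steps fail.

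The preceding proposition only says which distinguished reflections of $W$ lie in $W_\chi^{\on{en}}$; it does not say they generate it. The freeness claim is false, and Table~\ref{table-rd1} already shows this. It gives $(z^2-1)^2$ for $(B_2,4_\rs)$ and $({}^2D_4,8_\rs)$ with $\chi_s\neq1$, and for $(A_3,4_\rs)$ with $\chi_s$ of order $2$. So $e_s=2<n_s$ in these cases.

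Concrete failures:
\begin{enumerate}
\item $(F_4,4_\rs)$: $t_1^2,t_2^2,t_5^2,t_6^2\in W_{\chi_1}\cong G_{4,1,2}$, although $\chi_1|_{I_{t_i}}\neq1$ for these $i$. The group generated by the $t_i$ with $\chi_1|_{I_{t_i}}=1$ is $\langle t_3,t_4\rangle\cong\mu_4^2$, which has index $2$.
\item $({}^2E_6,4_\rs)$: $t_1^2,t_3^2,t_4^2,t_6^2\in W_{\chi_1}$ and $t_3^2\in W_{\chi_2}$, with nontrivial local characters.
\item $(E_8,8_\rs)$: $t_{13}^2,\dots,t_{16}^2\in W_{\chi_1}$, with nontrivial local characters.
\item $(F_4,8_\rs)$, rank one: $W_{\chi_1}=W=\mu_8$ although $\chi_1|_{I_s}=\chi_1\neq1$. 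Here $I\cong\mu_2$ is not central in $F_4$, which also contradicts your remark that $e_s=1$ with $\chi_s\neq1$ forces $I_s$ central. Your recipe gives $W_{\chi_1}^{\on{en}}=1$, whereas the endoscopic group $B_4$ with $m=8$ gives $\mu_8$.
\item $(G_2,3_\rs)$: the proposition does not exclude this case, although you set it aside. Your recipe gives $W_{\chi_1}^{\on{en}}=1$, whereas $W_{\chi_1}^{\on{en}}=W_{\chi_1}=\langle s^2\rangle\cong\mu_3$ (the endoscopic group is $A_2$ with $m=3$).
\end{enumerate}
Carried out as written, your argument either stalls at the verification step or concludes $W_\chi^0\neq W_\chi^{\on{en}}$ in cases where the proposition asserts equality.

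\textbf{What is needed.} In all cases, determine $W_\chi^{\on{en}}$ independently as the little Weyl group of the stable order-$m$ automorphism of $\check G(\chi)^0$. For example, this gives $G_{4,1,2}$ for $B_4$ with $m=4$, $\mu_8$ for $B_4$ with $m=8$, and $G_{8,2,2}$ for $D_8$ with $m=8$. Then combine $W_\chi^{\on{en}}\subseteq W_\chi^0$ with an order comparison. This is the same device you already use for $E_6$ and $E_7$, and it is what the $\check G(\chi)^0$ column of the tables records.

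Your reflection-by-reflection argument is a valid shortcut only where its hypothesis holds. Since $W_\chi^{\on{en}}\subseteq W_\chi^0$ contains every distinguished $s$ with $\chi|_{I_s}=1$, equality follows whenever every reflection of $W_\chi$ is a power of such an $s$. The explicit action formulas show this for $({}^3D_4,3_\rs)$, $(F_4,3_\rs)$, $(E_6,3_\rs)$, $(E_8,3_\rs)$, $(E_8,4_\rs)$ and $(E_8,5_\rs)$: there $t_i\in W_\chi$ if and only if $\chi|_{I_{t_i}}=1$. It fails in the cases listed above.
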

As in~\cite[\S7]{VX2}, let $\on{Irr}(\cH_{W_\chi^0})$ denote the set of irreducible representations (over~$\bC$) of the Hecke algebra $\cH_{W_\chi^0}$. For $\rho\in\on{Irr}\cH_{W_\chi^0}$, let
$$V_{\rho,\chi}:=\bC[\widetilde{B}_W]\otimes_{\bC[\widetilde B_{W}^{\chi,0}]}(\bC_\chi\otimes\rho)$$ where $B_{W}^{\chi,0}$ acts on $\rho$ via the quotient $\bC[\widetilde B_{W}^{\chi,0}]\to\cH_{W_\chi^0}$. Then $V_{\rho,\chi}$ is an irreducible $\bC[\widetilde{B}_W]$-module if $W_\chi=W_\chi^0$. When $W_\chi\neq W_\chi^0$, let us write $V_{\rho,\chi}^\delta$ for the irreducible direct summands of $V_{\rho,\chi}$. Let $\cL_{\rho,\chi}$, resp. $\cL_{\rho,\chi}^\delta$ denote the irreducible $G_0$-equivariant local systems on $\Lg_1^{rs}$ corresponding to $V_{\rho,\chi}$, resp. $V_{\rho,\chi}^\delta$.

\begin{corollary}The set of cuspidal character sheaves is
\bern
&&\{\on{IC}(\Lg_1^{rs},\cL_{\rho,\chi})\mid \rho\in\on{Irr}\cH_{W_\chi^0},\ \chi\in\hat I/W,\ W_\chi=W_\chi^0\}\\&\cup&\{\on{IC}(\Lg_1^{rs},\cL_{\rho,\chi}^\delta)\mid \rho\in\on{Irr}\cH_{W_\chi^0}, \chi\in \hat I/W,\ W_\chi\neq W_\chi^0\}\,.
\eern 
\end{corollary}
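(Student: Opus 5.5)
The corollary will follow from Theorem~\ref{mainthm} together with the result of~\cite{LTVX} recalled in the introduction. That result says that for stable gradings the cuspidal character sheaves on $\Lg_1$ are exactly the full support character sheaves, and each of them occurs as a simple quotient of some $\cP_\chi^\dag$. Here $\chi$ runs over representatives of the equivalence classes of nil-supercuspidal data, which in the stable setting may be taken to be $\chi\in\hat I/W$. A full support character sheaf is an IC extension of an irreducible $G_0$-equivariant local system on $\Lg_1^{rs}$. So the plan has two steps. First, identify the irreducible constituents of the local systems $\cM_\chi$. Second, check that as $\chi$ runs over $\hat I/W$ these constituents give exactly the listed IC sheaves, each appearing once.

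For the first step, use the description in Theorem~\ref{mainthm}: $\cM_\chi=\bC[\widetilde{B}_W]\otimes_{\bC[\widetilde B_{W}^{\chi,0}]}(\bC_\chi\otimes\cH_{W_\chi^0})$. The Hecke algebras $\cH_{W_\chi^0}$ occurring in the tables are specialisations of generic cyclotomic Hecke algebras, and they are semisimple. I would check this case by case using the known semisimplicity criteria (Schur elements nonvanishing at the specialisation), or appeal to the argument of~\cite[\S7]{VX2}. Given semisimplicity, $\cH_{W_\chi^0}\cong\bigoplus_{\rho}\rho^{\oplus\dim\rho}$ as a left module. Induction commutes with direct sums, so $\cM_\chi$ is a direct sum of the modules $V_{\rho,\chi}$. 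The IC extension of a local system is exact on semisimple local systems, so $\cP_\chi^\dag$ is semisimple and its simple quotients are the IC extensions of the irreducible summands of the $V_{\rho,\chi}$.

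For the second step, apply Clifford--Mackey theory to the normal subgroup $I\subset\widetilde B_W$. When $W_\chi=W_\chi^0$, the stabiliser of $\chi$ in $\widetilde B_W$ is $\widetilde B_W^{\chi,0}$, so inducing the irreducible module $\bC_\chi\otimes\rho$ gives an irreducible module. Moreover, distinct pairs $(\chi\in\hat I/W,\rho)$ give non-isomorphic modules, because the restriction to $I$ recovers the $W$-orbit of $\chi$ and the $\chi$-isotypic part recovers $\rho$. When $W_\chi\neq W_\chi^0$, this happens only in the $E_6$ and $E_7$ cases listed in the proposition above, with $W_\chi/W_\chi^0\cong\mu_3$ or $\mu_2$. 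In those cases one first induces from $\widetilde B_W^{\chi,0}$ to $\widetilde B_W^{\chi}$, where the module may split into summands, and then induces irreducibly to $\widetilde B_W$. This produces the summands $V_{\rho,\chi}^\delta$. Distinct $(\chi,\rho,\delta)$ again give distinct modules, except when $W_\chi/W_\chi^0$ permutes the $\rho$'s. I would treat that case by indexing over orbits, which is how the notation $\cL^\delta_{\rho,\chi}$ should be read.

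The main obstacle is the semisimplicity and multiplicity bookkeeping in the non-split cases $W_\chi\neq W_\chi^0$. There one has to know how $W_\chi/W_\chi^0$ acts on $\on{Irr}\cH_{W_\chi^0}$, namely through the diagram automorphism of $D_4$ in the $\mu_3\ltimes G_4$ case, of $S_8$ and $W_{E_6}$ in the $E_7$ cases, and so on. One also has to know that the extension obstruction vanishes, since the quotient group is cyclic. Once this is settled, the list of all simple constituents of $\cP_\chi^\dag$, $\chi\in\hat I/W$, is exactly the union stated in the corollary.
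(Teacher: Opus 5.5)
Your overall frame matches how the corollary is meant to follow. By \cite{LTVX}, the cuspidal character sheaves are exactly the simple quotients of the $\cP_\chi^\dag$ for $\chi\in\hat I/W$, and Theorem~\ref{mainthm} identifies $\cP_\chi^\dag$ with $\on{IC}(\Lg_1^{rs},\cM_\chi)$. However, the step your argument rests on is false: the Hecke algebras $\cH_{W_\chi^0}$ here are not semisimple, because their defining polynomials have repeated roots at $z=1$. For instance, $\cH_{S_2,-1}=\bC[z]/((z-1)^2)$, and more generally $\cH_{W,-1}=\cH_{W,\Phi_1^2}$ is the Iwahori--Hecke algebra at $q=-1$ (put $T_s=-\sigma_s$). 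Likewise $\cH_{\mu_3,\Phi_1^3}=\bC[z]/((z-1)^3)$ is a $3$-dimensional local algebra with a single simple module; the paper's treatment of $\chi_1$ for $(G_2,3_\rs)$ uses exactly this. And $\cH_{\mu_6,\Phi_1^3\Phi_2\Phi_6}$ has dimension $6$ but only $4$ simple modules. So your proposed Schur-element check would fail. As a result $\cM_\chi$ is not a direct sum of the $V_{\rho,\chi}$, and $\cP_\chi^\dag$ is in general not semisimple. This is precisely why the input from \cite{LTVX} is phrased in terms of simple quotients rather than direct summands.

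Semisimplicity of $\cH_{W_\chi^0}$ is not actually needed. Intermediate extension preserves surjections, and $\on{IC}(\Lg_1^{rs},\cM_\chi)$ has no nonzero quotient supported on $\Lg_1\setminus\Lg_1^{rs}$. Hence its simple quotients are precisely the $\on{IC}(\Lg_1^{rs},\cL)$ with $\cL$ a simple quotient of the finite-dimensional $\widetilde B_W$-module $\cM_\chi$. By Frobenius reciprocity, $\on{Hom}_{\widetilde B_W}(\cM_\chi,V)\cong\on{Hom}_{\widetilde B_W^{\chi,0}}(\bC_\chi\otimes\cH_{W_\chi^0},V)$. A nonzero image of $\bC_\chi\otimes\cH_{W_\chi^0}$ contains some simple $\bC_\chi\otimes\rho$. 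Conversely, $\cM_\chi\twoheadrightarrow V_{\rho,\chi}$ for every $\rho\in\on{Irr}\cH_{W_\chi^0}$, because each $\rho$ is a quotient of the regular module and induction is exact. So the simple quotients of $\cM_\chi$ are exactly the simple quotients of the $V_{\rho,\chi}$.

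These $V_{\rho,\chi}$ are semisimple. Since $W_\chi^0\lhd W_\chi$, the group $\widetilde B_W^{\chi,0}$ is normal of finite index in $\widetilde B_W^{\chi}$. Inducing $\bC_\chi\otimes\rho$ up to $\widetilde B_W^{\chi}$ therefore gives a semisimple module, by Clifford's theorem plus averaging over $W_\chi/W_\chi^0$. Inducing each simple summand further to $\widetilde B_W$ stays irreducible, by Clifford theory for the finite normal subgroup $I$. This gives $V_{\rho,\chi}$ irreducible when $W_\chi=W_\chi^0$, and the summands $V^\delta_{\rho,\chi}$ otherwise, which is exactly the corollary. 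Since the statement is about sets, your multiplicity and extension-obstruction bookkeeping is not needed.
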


\subsection{Calculating the endoscopic groups}In this subsection we give an example in the case of $(G,\theta)=(E_6,3_\rs)$ to demonstrate how we calculate the endoscopic groups $\check G(\chi)^0$. Let $\check G$ be the dual group and $\check T$ the dual torus of $T=Z_G(\fa)$.

We have $I=T^\theta$ and we can regard $\chi\in\hat I$ as an element of $\check T^{\check\theta}$ (see~\cite[(5.1)]{VX2}). Let $\omega_i\in X_*(\check T)=X^*(T)$, $1\leq i\leq 6$, be such that
$
\langle\omega_i,\check\alpha_j\rangle=\delta_{i,j},\,i,j=1,\ldots,6.
$
Then $\check T=\{\prod_{i=1}^6\omega_i(x_i)\mid x_i\in\bC^*\}$. 
Let us write $\omega_i=\sum_{j=1}^6 a_{ij}\alpha_j$ as $\omega_i=(a_{i1},\ldots,a_{i6})$. Then
\bern
&&\omega_1=\tfrac{1}{3}(4,3,5,6,4,2),\ \omega_2=(1,2,2,3,2,1),\ \omega_3=\tfrac{1}{3}(5,6,10,12,8,4)\\
&&\omega_4=(2,3,4,6,4,2),\ \omega_5=\tfrac{1}{3}(4,6,8,12,10,5),\ \omega_6=\tfrac{1}{3}(2,3,4,6,5,4).
\eern
Let  $w_h$ be the Coxeter element in $E_6$ chosen in~\S\ref{ssec-dynkin}. We can assume that $\theta|_T=w=w_h^4$.
One checks that 
$w(\omega_i)= -\omega_1+\omega_2+\omega_3-\omega_4, -\omega_1+\omega_3-\omega_4+\omega_5-\omega_6,\,$$
 -\omega_1+\omega_2+\omega_3-2\omega_4+\omega_5,\,
 -\omega_1+\omega_2+2\omega_3-3\omega_4+2\omega_5-\omega_6,\,\
 \omega_2+\omega_3-2\omega_4+\omega_5-\omega_6,\,
 \omega_2-\omega_4+\omega_5-\omega_6.
$
So we have
\bern
&&\check T^{w}=\langle\check\gamma_1:=\omega_1(\xi_3^2)\,\omega_3(\xi_3^2)\,\omega_5(\xi_3)\,\omega_6(\xi_3),\ \check\gamma_2:=\omega_2(\xi_3)\,\omega_3(\xi_3^2)\,\omega_5(\xi_3^2),\\
&&\qquad\qquad \check\gamma_3:=\omega_3(\xi_3^2)\,\omega_4(\xi_3)\,\omega_5(\xi_3)\omega_6(\xi_3^2)\rangle\cong\mu_3^3\,.
\eern
Let us write $\check\gamma^{abc}=\check\gamma_1^{a}\check\gamma_2^{b}\check\gamma_3^{c}$. One checks that 
\begin{align*}
&(\check G^{\check\gamma^{abc}})^0\cong A_2\times A_2\times A_2&& abc=010,102,112,122,020,201,221,211\\
&(\check G^{\check\gamma^{abc}})^0\cong  D_4&&\text{otherwise}.
\end{align*}
Moreover $\theta$ leaves each factor of $A_2$ stable in the first case. For example, $(\check G^{\check\gamma_2})^0$ has root basis $\{\beta_1=100000,\theta\beta_1=011100\}\cup\{\beta_2=000001,\theta\beta_2=010110\}\cup\{\beta_3=000100,\theta\beta_3=112221\}$, where $\theta^2\beta_i=-\beta_i-\theta\beta_i$, $i=1,2,3$.

\section{Distinguished reflections and reduction to rank one}\label{sec-reduction}

In this section we determine, for each distinguished  reflection $s\in W$, the structure of $(G_{s})_\der$, the groups $I_s$, and identify $\theta|_{(G_{s})_\der}$ in a case-by-case manner. We do so by describing the distinguished reflections in $W$ explicitly in~\S\ref{ssec-cyclotomic} and~\S\ref{ssec-noncyclo} (when $\dim\fa\geq 2$) following Reeder's approach in~\cite{R}.

Recall the notations from~\S\ref{ssec-thetaorbits}.  The following proposition (together with the analysis in~\S\ref{ssec-cyclotomic}-\S\ref{ssec-noncyclo}) allows us to determine $\theta|_{(G_s)_\der}$.
\begin{proposition}\label{prop-distinguished}
There exists a partition $ R=\sqcup_{i} S_i$ such that 
\begin{enumerate}
\item each $S_i$ is a union of $\theta$-orbits in $R$ and it forms a sub root system of $ R$ 
\item each $S_i$ determines a unique distinguished reflection $t_i:=t_{S_i}\in W$
\item $\theta|_{(\Lg_{S_i})_{\on{der}}}$ is (GIT) stable of rank 1 with little Weyl group $\langle t_i\rangle$.
\end{enumerate}
The distinguished reflections in   $W$ are precisely the $t_{i}$'s, so the reflections in $W$ are $t_{i}^j$, $j=1,\ldots,|\langle t_{i}\rangle |-1$. Moreover,
$
Z_\Lg(\fa_{t_{i}})=\Lg_{S_i}
:=\Lt\oplus\bigoplus_{\alpha\in S_i}\Lg_\alpha\,$, so $(\Lg_{t_i})_\der=(\Lg_{S_i})_{\der}$.
\end{proposition}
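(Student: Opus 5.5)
Our plan is to build the partition from the reflection hyperplanes of $(W,\fa)$. For each $\alpha\in R$, set $\ker_\fa\alpha:=\{x\in\fa\mid d\alpha(x)=0\}$. Since $\fa$ lies in the $\zeta_m$-eigenspace of $\theta|_\Lt$, we have $(\theta\alpha)|_\fa=\zeta_m^{-1}\alpha|_\fa$ up to the convention for the action. Hence all roots in one $\theta$-orbit restrict to nonzero multiples of a single linear form on $\fa$. These restrictions are nonzero because $\fa$ contains regular semisimple elements. So every $\theta$-orbit $\cO\subset R$ determines a hyperplane $H_\cO=\ker_\fa\alpha$, $\alpha\in\cO$.

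We then declare two orbits equivalent when they give the same hyperplane, and let $S_i$ be the union of the orbits with hyperplane $H_i$. Equivalently,
\beqn
S_i=\{\alpha\in R\mid \alpha|_{H_i}=0\}.
\eeqn
With this description, $S_i$ is the root system of the Levi subgroup $Z_G(H_i)$: it is closed under negation and under sums that are roots, and it is $\theta$-stable. This gives (1). It also gives $Z_\Lg(H_i)=\Lt\oplus\bigoplus_{\alpha\in S_i}\Lg_\alpha=\Lg_{S_i}$.

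Next we match the $H_i$ with the reflection hyperplanes of $W$. First suppose $H$ is a reflection hyperplane of $W$ with reflection $s$, and take $x\in H$ generic. Then $Z_G(x)\neq T$, because otherwise the stabiliser of $x$ in $W$ would be trivial by Steinberg's theorem applied to the graded setting (Vinberg: $W_x$ is the little Weyl group of $Z_G(x)$ with the induced grading). So $H\subset\ker\alpha$ for some $\alpha$, that is, $H=H_i$ for some $i$.

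Conversely, fix $H_i$ and put $L=Z_G(H_i)$, which is $\theta$-stable. Then $H_i$ is central in $\on{Lie}L$, and $\fa=H_i\oplus\fa_i^\perp$ with $\fa_i^\perp\subset(\Lg_{S_i})_\der\cap\Lg_1$ one-dimensional. This holds because $\fa\cap\Lt_\der(L)$ is the complement determined by the roots in $S_i$. Since $\theta$ is stable and $\fa\subset\Lg_1$ contains regular elements of $\Lg$, the line $\fa_i^\perp$ contains regular semisimple elements of $(\Lg_{S_i})_\der$. The grading on $L_\der$ is elliptic because $w\vartheta$ acts on $X^*(T\cap L_\der)\otimes\bC$ with no fixed vectors, as it has eigenvalue $\zeta_m^{\pm1}$ only on $\fa$. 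Hence the grading on $(\Lg_{S_i})_\der$ is stable of rank one, which is (3).

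By Vinberg theory, the little Weyl group $W_L$ of $(L_\der,\theta)$ is a cyclic group of order at least $2$, since a rank one stable grading has nontrivial $W$. It embeds in $W$ as the pointwise stabiliser of $H_i$, using \eqref{eqn-weyl}: $W_L=W_{G}(L)^\theta$ and it fixes $H_i$. A cyclic group fixing a hyperplane is generated by a unique distinguished reflection $t_i$. This gives (2) and shows that every $H_i$ is a reflection hyperplane. The bijection between hyperplanes and distinguished reflections recalled in \S2.3 then gives the claim that the distinguished reflections are exactly the $t_i$ and the reflections are their powers.

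The main obstacle is the identification of the pointwise stabiliser $W_{H_i}$ in $W$ with the little Weyl group of $L_\der$. This needs the stabiliser $N_{G_0}(\fa)\cap Z_{G_0}(H_i)$ to land in $L^\theta$, and it needs $(L_\der)^\theta$ together with $T^\theta$ to give all of that stabiliser. For this we would cite Vinberg's result that $Z_W(x)$ equals the little Weyl group of $Z_G(x)$ for $x\in\fa$, applied to generic $x\in H_i$. We would also use the fact that $(G_s)_\der$ is simply connected (\cite{S2}), so that Steinberg connectedness applies to $(L_\der)^\theta$.
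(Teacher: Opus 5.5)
Your argument is correct, and it takes a genuinely different, uniform route from the paper's.

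\textbf{The paper's route.} The paper proceeds case by case. It builds the $S_i$ explicitly: Reeder's $K$-equivalence classes when $w\vartheta$ is cyclotomic, and a direct $\theta$-orbit analysis for $(E_6,6_\rs)$, $({}^2E_6,4_\rs)$ and $(E_7,6_\rs)$. It writes down $t_{S_i}$ and verifies by hand that it is a distinguished reflection whose fixed hyperplane is cut out by $S_i$. It shows these reflections are all the distinguished ones by comparing the number of $S_i$ with the counts in Table~\ref{tab-rt}. Only then does it upgrade $\Lg_{S_i}\subset Z_\Lg(\fa_{t_i})$ to an equality, using \cite[Theorem 2.12]{DK} and the fact that each $\theta$-orbit contributes one dimension to $\Lg_0$.

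\textbf{Your route.} You define $S_i=\{\alpha\mid\alpha|_{H_i}=0\}$ for the root hyperplanes $H_i\subset\fa$. The partition property and $Z_\Lg(H_i)=\Lg_{S_i}$ then hold by construction. All the content moves into showing that root hyperplanes and reflection hyperplanes of $W$ coincide.

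\textbf{Your ``main obstacle''.} This can be settled directly. By Steinberg's theorem, the pointwise stabiliser of $H_i$ in $W_G$ is $W(L)$. Hence $W_{H_i}=W\cap W(L)=W(L)^\theta$. This group is the little Weyl group of $(L_\der,\theta)$ by \eqref{eqn-weyl} applied to $L_\der$. The lifting of $W(L)^\theta$ to $N_{L_\der}(T\cap L_\der)^\theta$ works because $1-\theta$ is invertible on $\Lt$.

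\textbf{Two points to tighten.}
\begin{enumerate}
\item Ellipticity on $\Lt\cap\mathfrak{l}_\der$ follows simply from $\Lt^\theta=0$, which is the stability of $\theta$. It does not follow from the eigenvalue remark you make.
\item The nontriviality of a rank-one stable little Weyl group needs a reason. For example, Springer's theory of regular elements (Lehrer--Springer in the twisted case) shows that the invariants of $W_L$ on $\fa_i^\perp$ are restrictions of $W(L)$-invariants. None of these has degree one, since $L_\der$ is semisimple.
\end{enumerate}

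\textbf{What each approach gives.} Your approach yields a type-free proof, valid for every stable grading and independent of the tables. It does not give the explicit root types, the formulas for $t_i$, or the groups $I_{S_i}$. The paper's case analysis produces these along the way, and they are used for Proposition~\ref{prop-rankone} and in \S\ref{sec-rankgt2}.
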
 
\begin{remark}
The proposition also holds for stably graded Lie algebras in classical types, see~\cite[pp44-45]{VX2}\,.
\end{remark}

\begin{proposition}\label{prop-rankone}
For each $(G,\theta)$ of rank $\geq 2$, the $\theta|_{(G_{s})_\der}$ are as follows
\FloatBarrier
\begin{longtable}[c]{p{1.7cm}p{2.2cm}p{1.5cm}p{1cm}p{3.2cm}}
\hline\hline
$(G,\theta)$&$w\vartheta$&$\Phi(w\vartheta)$&$r$&$\theta|_{(G_s)_\der}$\\
\hline\hline
$(^3D_4,3_\rs)$&$A_2\times\tilde A_2$&$\Phi_3^2$&$2$&$(A_2,3_\rs)$\\
\hline
$(F_4,3_\rs)$&$A_2+\tilde{A_2}$&$\Phi_3^2$&$2$&$(A_2,3_\rs)$\\ 
\hline
$(F_4,4_\rs)$&$D_4(a_1)$&$\Phi_4^2$&$2$&$(B_2,4_\rs)$\\ 
\hline
$(F_4,6_\rs)$&$F_4(a_1)$&$\Phi_6^2$&$2$&$(^2A_2,6_\rs)$\\
\hline
$(E_6,3_\rs)$&$3A_2$&$\Phi_3^3$&$3$&$(A_2,3_\rs)$\\
\hline
$(E_6,6_\rs)$&$E_6(a_2)$&$\Phi_6^2\Phi_3$&$2$&$(^2A_2,6_\rs)$ $(A_2,3_\rs)$\\
\hline
$(^2E_6,4_\rs)$&$-D_4(a_1)$&$\Phi_4^2\Phi_2^2$&$2$&$(A_3,4)$\\
\hline

$(E_7,6_\rs)$&$E_7(a_4)$&$\Phi_6^3\Phi_2$&$3$&$(A_1,2_\rs)\ (^2A_2,6_\rs)$\\ 
\hline
$(E_8,3_\rs)$&$4A_2$&$\Phi_3^4$&$4$&$(A_2,3)$\\ 
\hline
$(E_8,4_\rs)$&$2D_4(a_1)$&$\Phi_4^4$&$4$&$(A_1,2_\rs)$\\
\hline
$(E_8,5_\rs)$&$2A_4$&$\Phi_5^2$&$2$&$(A_4,5_\rs)$\\ 
\hline
$(E_8,6_\rs)$&$E_8(a_8)$&$\Phi_6^4$&$4$&$(^2A_2,6_\rs)$\\ 
\hline
$(E_8,8_\rs)$&$D_8(a_3)$&$\Phi_8^2$&$2$&$(^2D_4,8_\rs)\ (A_2,2_\rs)$\\ 
\hline
$(E_8,10_\rs)$&$E_8(a_6)$&$\Phi_{10}^2$&$2$&$(^2A_4,10_\rs)$\\ \hline
$(E_8,12_\rs)$&$E_8(a_3)$&$\Phi_{12}^2$&$2$&$(^3D_4,12_\rs)\ (^2A_2,6_\rs)$\\ 
\hline\hline
\caption{Stable gradings of rank $\geq 2$}
\label{tab-rankgt2}
\end{longtable}
\end{proposition}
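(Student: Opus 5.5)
The plan is a case-by-case verification built on Proposition~\ref{prop-distinguished}. For each $(G,\theta)$ of rank $\geq 2$ in Table~\ref{tab-rankgt2}, we realise $\theta|_T$ as an explicit regular elliptic element $w\vartheta$. We take $w$ to be a suitable power of the Coxeter element $w_h$ fixed in \S\ref{ssec-dynkin}: for instance $w_h^4$ for $(E_6,3_\rs)$, $w_h^{h/m}$ in general when $m$ divides the Coxeter number $h$. For $(F_4,4_\rs)$, $(E_8,8_\rs)$ and $(E_8,12_\rs)$, where $m\nmid h$, we instead take representatives of the classes $D_4(a_1)$, $D_8(a_3)$, $E_8(a_3)$ from Carter's classification. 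For ${}^3D_4$ and ${}^2E_6$ we take $w$ composed with the pinned automorphism $\vartheta$. The columns $w\vartheta$ and $\Phi(w\vartheta)$ are then read off from the characteristic polynomial of $w\vartheta$ on $\Lt$. The rank $r=\dim\fa$ is the multiplicity of $\Phi_m$, using \cite{RLYG} and the fact that $\fa$ is the $\zeta_m$-eigenspace of $w\vartheta$.

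The main step is to produce the partition $R=\sqcup_i S_i$ of Proposition~\ref{prop-distinguished}. Following Reeder~\cite{R}, we list the $\theta$-orbits in $R$; each has exactly $m$ elements. A root subsystem $S\subset R$ spanned by a union of $\theta$-orbits gives a candidate reflection hyperplane $\fa_S=\{a\in\fa\mid \alpha(a)=0,\ \alpha\in S\}$. It is a genuine hyperplane exactly when the span of $S$ in $\Lt^*$ meets the $\zeta_m$-eigenspace of $w\vartheta$ in the dual picture in a one-dimensional piece, i.e.\ when $\theta$ restricted to $\Lt\cap[\Lg_S,\Lg_S]$ has $\zeta_m$ as an eigenvalue of multiplicity one. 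Concretely, for each $\theta$-orbit $O$ we compute the smallest $\theta$-stable closed subsystem $S$ containing $O$ whose rank-one condition holds. We then identify $S$ from its rank and number of roots: $|S|=6$ with rank $2$ gives $A_2$, $|S|=8$ with rank $2$ gives $B_2$, $|S|=12$ with rank $3$ gives $A_3$, $|S|=20$ with rank $4$ gives $A_4$, $|S|=24$ with rank $4$ gives $D_4$, and $|S|=2$ gives $A_1$. We also determine whether $\theta$ acts on $S$ by an inner or an outer (diagram) automorphism, by checking whether $\theta$ preserves a base of $S$ up to $W_S$ and comparing the order of $\theta$ on $S$ with $m$. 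This gives the types $(A_2,3_\rs)$ versus $({}^2A_2,6_\rs)$, $({}^3D_4,12_\rs)$, and so on, since a stable rank-one grading of order $m$ on a simple group is determined by the (twisted) Coxeter number or by the small list of Table~\ref{table-rd1}.

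The hardest part is the consistency check that the $S_i$ exhaust $R$ with the right number of distinguished reflections. We handle it by counting. We compute the reflection hyperplanes of $W$ from the Shephard--Todd data of $W=W_G^{w\vartheta}$ in \eqref{eqn-weyl}, as tabulated in \cite{RLYG} (e.g.\ $G_{25}$ has $12$ hyperplanes, $G_{32}$ has $40$). We then check that $\sum_i |S_i|=|R|$ with the number of $S_i$ equal to the number of hyperplanes. For example, for $(E_8,3_\rs)$ this is $40\cdot 6=240$ with $S_i\cong A_2$. For $(E_8,4_\rs)$ it is $60\cdot 2\cdot\ldots$, i.e.\ $4$-element orbits pair into $A_1^2$ with $\theta$ swapping the factors, which becomes $(A_1,2_\rs)$ after restriction. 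When $W$ has two classes of hyperplanes, as in $(E_6,6_\rs)$, $(E_7,6_\rs)$, $(E_8,8_\rs)$ and $(E_8,12_\rs)$, the two entries in the last column correspond to the two classes. We confirm these by matching $|C_H|$ with the order of the little Weyl group of the rank-one piece: $6$ for $({}^2A_2,6_\rs)$, $3$ for $(A_2,3_\rs)$, $2$ for $(A_1,2_\rs)$, $4$ for $({}^3D_4,12_\rs)$, and so on.
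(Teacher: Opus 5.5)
The overall architecture is the same as the paper's: explicit $w\vartheta$, $\theta$-orbits, a partition of $R$ indexed by reflection hyperplanes, identification of each rank-one piece, then a count against the Shephard--Todd data. However, there is a genuine gap in the step that produces the pieces $S_i$.

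\textbf{The main gap: ``smallest'' should be ``largest''.} The root system of $(G_s)_{\mathrm{der}}$ is $S_H=\{\beta\in R\mid \beta|_H=0\}$, where $H=\ker(\alpha|_{\mathfrak a})$. This is the \emph{largest} $\theta$-stable subsystem containing $\alpha$ that satisfies your multiplicity-one condition. Indeed, if $S'\ni\alpha$ satisfies it, then $\mathfrak a_{S'}$ is a hyperplane contained in $H$, hence equal to $H$, hence $S'\subseteq S_H$.

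The \emph{smallest} additively closed such $S'$ is strictly smaller whenever some $\theta$-orbit in $S_H$ is closed on its own and already meets the $\zeta_m$-eigenspace. This happens in three of your cases.
\begin{itemize}
\item In $(F_4,4_\mathrm{s})$, the long orbit $\{\pm\delta,\pm\theta\delta\}$ consists of two orthogonal long roots, so it is closed. $\theta$ acts on its span with eigenvalues $\pm\zeta_4$. Your rule therefore returns $A_1\times A_1$, i.e.\ $(A_1,2_\mathrm{s})$, instead of $B_2$ and $(B_2,4_\mathrm{s})$.
\item In $({}^2E_6,4_\mathrm{s})$ the same happens for the orbit $\{\pm\beta,\pm\theta\beta\}$, $\beta=\alpha+\theta\alpha$, inside each $A_3$.
\item In $(E_8,8_\mathrm{s})$, model $\theta$ on $D_4$ by $e_1\mapsto e_2\mapsto e_3\mapsto e_4\mapsto -e_1$. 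The orbit of $\beta+\theta\beta$ is $\{\pm e_1\pm e_3,\pm e_2\pm e_4\}\cong A_1^4$, so you would get $18$ pieces of type $A_1^4$ rather than $12$.
\end{itemize}
In each case the remaining orbits in the same hyperplane still generate the full $B_2$, $A_3$ or $D_4$ containing the spurious piece. So your family is not a partition, the count against $G_8$, $G_9$ fails, and the proposal gives no way to repair it.

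The fix is to close up $\mathbb{Q}$-linearly rather than additively: take all roots vanishing on $H$. In the cyclotomic cases this is Reeder's $K$-equivalence class $R\cap K\alpha$ used in \S\ref{ssec-cyclotomic}. The paper then pins down maximality by proving $Z_{\mathfrak g}(\mathfrak a_{t_i})=\mathfrak g_{S_i}$. It uses the Dadok--Kac result $Z_{\mathfrak g_0}(\mathfrak a_{t_i})\cap Z_{\mathfrak g_0}(\mathfrak a_{t_j})=0$ for $i\neq j$, together with the fact that each $\theta$-orbit contributes one dimension to $\mathfrak g_0$. This is exactly the step your proposal lacks.

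\textbf{Further problems with the final check.}
\begin{itemize}
\item You use the wrong order for $({}^2A_2,6_\mathrm{s})$. Its little Weyl group is $\mu_3$, not $\mu_6$: the distinguished reflection $(s_\alpha s_{\theta\alpha})^2$ has order $3$, and $R_{\chi_s}=(z-1)^2(z+1)$ has degree $3$ in Table~\ref{table-rd1}. With your value, $G_{26}$ and $G_{10}$, which have no reflections of order $6$, would not match at all.
\item In $(E_6,6_\mathrm{s})$ both classes of reflections in $G_5$ have order $3$. So $|C_H|$ cannot separate $({}^2A_2,6_\mathrm{s})$ from $(A_2,3_\mathrm{s})$ there; you need the orbit structure directly.
\item $\mathfrak a_S$ being a hyperplane does not by itself make it a reflection hyperplane of $W$. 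The paper constructs $t_S\in W(S)\cap W_G^\theta$ explicitly. Once your pieces are the correct $S_H$, your count can replace that construction only if you add the standard fact that every reflection hyperplane of $W$ is of the form $\ker(\alpha|_{\mathfrak a})$.
\item Minor: $4\mid 12$, so $(F_4,4_\mathrm{s})$ is a power-of-Coxeter case, with $w=w_h^3$. Meanwhile $(E_8,4_\mathrm{s})$ is missing from your list of exceptions.
\end{itemize}
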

\begin{remark}
We omit the entries for stable $\bZ/2\bZ$-gradings (see~\S\ref{ssec-stable2}) where $\theta|_{(G_s)_\der}\cong (A_1,2_\rs)$. We have included in Table~\ref{tab-rankgt2} the following data from~\cite{RLYG}:
\begin{itemize}
\item $w\vartheta\in W_G\vartheta$ that gives rise to $\theta$. The notation for Weyl group elements is from~\cite{Ca}.
\item $\Phi(w\vartheta)$ denotes the characteristic polynomial of $w\vartheta$.

\item $r=\dim\fa$, the rank of the graded Lie algebra.
\end{itemize}

\end{remark}

To prove Proposition~\ref{prop-distinguished} and hence Proposition~\ref{prop-rankone}, we will describe the subsets $S_i$ in the partition $R=\sqcup S_i$ explicitly and associate a unique distinguished reflection $t_{S_i}\in W$ to each $S_i$, case-by-case, in the remainder of this section. We also determine  the groups $I_{S_i}$ associated to $\theta|_{(\Lg_{S_i})_{\der}}$ which we use in~\S\ref{sec-rankgt2}.  We give a summary of the subroot systems arising from $S_i$, referred to as root types,  in the table below (grouped according to the little Weyl group, denoted by $C$ here), where $R_0\,(a\times b)$ indicates that there are $b$ subsets $S_i\subset R$ of type $R_0$ and each $S_i$ consists of $a$ orbits. We also include the following data 
\begin{itemize}
\item $|C|$ denotes the size of the complex reflection group $C$, $N_s$ denotes the numbers of distinguished reflections in $C$, where $a^{n_a}$ means that the number of order $a$ distinguished reflections is $n_a$ \cite{Co}
\item diagram of $C$ as in~\cite{BMR}
\item the possible $(G,\theta)$ such that $C$ arises as the corresponding little Weyl group (\cite{RLYG})
\end{itemize}
\FloatBarrier
\begin{longtable}[c]{p{1cm}p{1.8cm}p{1.2cm}p{5.5cm}p{1.8cm}p{2cm}}\\
\hline\hline
$C$&$|C|$&$N_s$&Diagram&$(G,\theta)$&Root types\\\hline\hline
$ G_4$&$2^3\cdot 3$&$3^4$&$\xymatrix{{\text{\textcircled{\tiny{3}}}}\ar@{-}[r]&{\text{\textcircled{\tiny{3}}}}\,}$&$(^3D_4,3_\rs)$&$A_2\,(2\times 4)$\\&&&&$(^3D_4,6_\rs)$&$A_2\,(1\times 4)$\\\hline
$G_5$&$2^3\cdot 3^2$&$3^{8}$&$\xymatrix{{\text{\textcircled{\tiny{3}}}}\ar@{=}[r]&{\text{\textcircled{\tiny{3}}}}\,}$&$(F_4,3_\rs)$&$A_2\,(2\times 8)$\\
&&&&$(F_4,6_\rs)$&$A_2\,(1\times 8)$\\
&&&&$\begin{array}{cc}(E_6,6_\rs)\\\ \end{array}$&$\begin{array}{cc}A_2\,(1\times 4)\\A_2^2(2\times 4)\end{array}$\\
\hline
$G_8$& $2^5\cdot 3$&$4^{6}$&$\xymatrix{{\text{\textcircled{\tiny{4}}}}\ar@{-}[r]&{\text{\textcircled{\tiny{4}}}}\,}$&$(F_4,4_\rs)$&$B_2\,(2\times 6)$\\
&&&&$(^2E_6,4_\rs)$&$A_3\,(3\times 6)$\\\hline
$G_{9}$&$2^6\cdot 3$&$\begin{array}{cc}4^{6}\\ 2^{6}\end{array}$&$\xymatrix{{\text{\textcircled{\tiny{4}}}}\ar@3{-}[r]&{\text{\textcircled{\tiny{2}}}}}$&$\begin{array}{cc}(E_8,8_\rs)\\\ \end{array}$&$\begin{array}{cc}D_4\,(3\times 6)\\A_1^4(1\times 12) \end{array}$\\\hline
$G_{10}$&$2^5\cdot3^2$&$\begin{array}{cc}4^{6}\\3^{8}\end{array}$&$\xymatrix{{\text{\textcircled{\tiny{4}}}}\ar@2{-}[r]&{\text{\textcircled{\tiny{3}}}}}$&$\begin{array}{cc}(E_8,12_\rs)\\\ \end{array}$&$\begin{array}{cc}D_4\,(2\times 6)\\A_2^2\,(1\times 8)\end{array}$\\\hline
$G_{16}$&$2^3\cdot 3\cdot 5^2$&$5^{12}$&\xymatrix{{\text{\textcircled{\tiny{5}}}}\ar@{-}[r]&{\text{\textcircled{\tiny{5}}}}}&$(E_8,5_\rs)$&$A_4\,(4\times12)$\\&&&&$(E_8,10_\rs)$&$A_4\,(2\times12)$\\\hline
$G_{25}$&$2^3\cdot 3^4$&$3^{12}$&$\xymatrix{{\text{\textcircled{\tiny{3}}}}\ar@{-}[r]&{\text{\textcircled{\tiny{3}}}}\ar@{-}[r]&{\text{\textcircled{\tiny{3}}}}}$&$(E_6,3_\rs)$&$A_2\,(2\times 12)$\\&&&&$(^2E_6,6_\rs)$&$A_2\,(1\times 12)$\\\hline
$G_{26}$&$2^4\cdot 3^4$&$\begin{array}{c}2^9\\3^{12}\end{array}$&$\xymatrix{{\text{\textcircled{\tiny{2}}}}\ar@{=}[r]&{\text{\textcircled{\tiny{3}}}}\ar@{-}[r]&{\text{\textcircled{\tiny{3}}}}}$&$\begin{array}{c}(E_7,6_\rs)\\\ \end{array}$&$\begin{array}{c}A_1^4\,(1\times 9)\\A_2\,(1\times 12) \end{array}$\\\hline\\
$G_{31}$&$2^{10}\cdot 3^2\cdot 5$&$2^{60}$&$\xymatrix@C-.8pc{&{\text{\textcircled{\tiny{2}}}}\ar@{-}[dl]\ar@{}@/^{2pc}/[rr]\ar@{}@/_{1pc}/[dr]&&{\text{\textcircled{\tiny{2}}}}\ar@{-}[dr]\ar@{}@/^{1pc}/[dl]&\\{\text{\textcircled{\tiny{2}}}}\ar@{-}[rr]&&{\text{\textcircled{\tiny{2}}}}\ar@{-}[rr]&&{\text{\textcircled{\tiny{2}}}}}$&$(E_8,4_\rs)$&$A_1^2\,(1\times 60)$\\\hline
$G_{32}$& $2^7\cdot3^5\cdot 5$ & $3^{40}$&
$
\xymatrix{{\text{\textcircled{\tiny{3}}}}\ar@{-}[r]&{\text{\textcircled{\tiny{3}}}}\ar@{-}[r]&{\text{\textcircled{\tiny{3}}}}\ar@{-}[r]&{\text{\textcircled{\tiny{3}}}}}
$&$(E_8,3_\rs)$&$A_2\,(2\times 40)$\\
&&&&$(E_8,6_\rs)$&$A_2\,(1\times 40)$\\\hline\hline
\caption{Root types and distinguished reflections}
\label{tab-rt}
\end{longtable}
When $\vartheta=1$ and $w$ is cyclotomic (i.e., its minimal polynomial is irreducible over $\bQ$),  the root types have been determined in~\cite{R}. In what follows we will use a similar strategy to study the remaining cases.
 
 \begin{proof}[Proof of Proposition~\ref{prop-distinguished}] 
 Let us write $t_i=t_{S_i}$. 
 By our case-by-case analysis in~\S\ref{ssec-cyclotomic}-\S\ref{ssec-noncyclo}, we have that 
 \beq\label{eqn-claim}
 \text{$a\in\fa_{t_{i}}$ if and only if $\beta(a)=0$ for any $\beta\in S_i$.}
 \eeq
  Since the $S_i$'s are disjoint, we conclude that $t_{i}\neq t_{j}$, $i\neq j$. Comparing the number of $S_i$'s and the number of distinguished reflections in the little Weyl group $W$ in Table~\ref{tab-rt}, we see that the $t_{i}$'s are precisely the distinguished reflections in $W$. 
  
Since $Z_\Lg(\fa_{t_i})=\Lt\oplus\bigoplus_{\alpha\in R,\alpha(\fa_{t_i})=0}\Lg_\alpha$,~\eqref{eqn-claim} implies that $\Lg_{S_i}\subset Z_\Lg(\fa_{t_i})$. By~\cite[Theorem 2.12]{DK}, we have $Z_{\Lg_0}(\fa_{t_i})\cap Z_{\Lg_0}(\fa_{t_j})=Z_{\Lg_0}(\fa)=0$ for $i\neq j$. Note that if $\alpha\in Z_{\Lg}(\fa_{t_i})$, then $\theta\alpha\in Z_{\Lg}(\fa_{t_i})$. So $\{\alpha\in R,\alpha(\fa_{t_i})=0\}$ is a union of $\theta$-orbits in $R$. Since each $\theta$-orbit of roots contributes one dimensional to $\Lg_0$ (see~\cite[Lemma 8.2]{LVX}), we conclude that $\Lg_{S_i}= Z_\Lg(\fa_{t_i})$. Together with our explicit description of the $S_i$'s in~\S\ref{ssec-cyclotomic}-\S\ref{ssec-noncyclo}, this completes the proof of the proposition. 
  \end{proof}

\subsection{The cyclotomic cases.} \label{ssec-cyclotomic}

As in~\cite{R}, we say that $w\vartheta$ is cyclotomic, if its minimal polynomial $M(w\vartheta)$ is irreducible (over $\bQ$). Suppose that $w\vartheta$ is cyclotomic.  Let $K=\bQ(w\vartheta)$ and $K\alpha=\{f(w\vartheta)\alpha\mid f(t)\in\bQ[t]\}$ for $\alpha\in R$. Two roots $\alpha$ and $\beta$ are said to be $K$-equivalent if $K\alpha=K\beta$. Then each $K$-equivalence class  $S\subset R$ forms a subroot system of rank equal to the degree of $K$ over $\bQ$. The same argument as in~\cite{R} shows that the primitive reflections (i.e., powers of distinguished reflections with the same order) of $W=W_G^\theta$ (see~\eqref{eqn-weyl}) are precisely the generators of the cyclic groups $W(S)\cap W_G^\theta $, where $S$ runs through the $K$-equivalence classes in $ R$ and $W(S)\subset W_G$ is the Weyl group of $S$ (generated by the reflections $s_\alpha$, $\alpha\in S$). In what follows, we describe each case more explicitly.

\subsubsection {$m=3$, $M(w\vartheta)=\Phi_3$.} In this case $\theta^2\alpha+\theta\alpha+\alpha=0$ so $\langle\theta^2\alpha,\check\alpha\rangle=\langle\theta\alpha,\check\alpha\rangle=-1$ (note that $\theta$ takes long, resp. short roots to long, resp. short roots). The $K$-equivalence classes are of the form $$S=\{\alpha,\theta\alpha,\theta^2\alpha\}\cup\{-\alpha,-\theta\alpha,-\theta^2\alpha\}\cong A_2$$ with a root basis $\{\alpha,\theta\alpha\}$. It follows that $\theta|_{(\Lg_S)_{\on{der}}}\cong(A_2,3_\rs)$. The distinguished reflection of order $3$ is 
\beqn
t_S=s_{\alpha}s_{\theta\alpha}\,\text{ and }I_{S}=\langle\check\alpha(\zeta_3^2)\theta\check\alpha(\zeta_3)\rangle\cong\mu_3.
\eeqn
This can be seen explicitly as follows. We have $t_S(\delta)=\delta+\theta^2\alpha(\delta) h_\alpha-\theta\alpha(\delta) h_{\theta\alpha},\,\delta\in\Lt$ and so $s_\alpha s_{\theta\alpha}=s_{\theta\alpha}s_{\theta^2\alpha}$. It follows that $t_S$ is well-defined (independent of choice of $\alpha\in S$) and $t_S\in W_G^\theta=W$. Moreover, $t_S(a)=a+\alpha(a)(\zeta_3h_\alpha-\zeta_3^2h_{\theta\alpha}),\ a\in\fa,\ 
t_S(a_0)=\zeta_3a_0\text{ for }a_0=h_\alpha-\zeta_3h_{\theta\alpha}\in\fa\,$. 
  Since $t_S(a)=a\iff \alpha(a)=0$ for $a\in\fa$ and $t_S(a_0)=\zeta_3a_0$, we conclude that $t_S$ is a distinguished reflection of order $3$. The claim on $I_S$ follows since 
   $I_S=\{x:=\check\alpha(x_1)\check\theta\alpha(x_2)\mid t_S(x)=x\}$.   
If we write $S_i=\pm\{\theta^a\beta_i,a=0,1,2\}$, $t_i=s_{\beta_i}s_{\theta\beta_i}$ and $\nu_i:=\check\beta_i(\zeta_3^2)\theta\check\beta_i(\zeta_3)$, then we have
\beq\label{action-m=3}
I_{t_i}=\langle\nu_i\rangle\text{ and } t_i(\nu_j)=\nu_j\nu_i^{\langle\check\beta_j-\theta\check\beta_j,\beta_i\rangle}\,.
\eeq

\subsubsection{$m=6$, $M(w\vartheta)=\Phi_6$.}  In this case, $\theta^2\alpha-\theta\alpha+\alpha=0$ so $\langle\theta^2\alpha,\check\alpha\rangle=-\langle\theta\alpha,\check\alpha\rangle=-1$. 
The $K$-equivalence classes are of the form $$S=\{\alpha,\theta\alpha,\theta^2\alpha,-\alpha,-\theta\alpha,-\theta^2\alpha\}\cong A_2$$ with a root basis $\{\alpha,-\theta\alpha\}$. It follows that $\theta|_{(\Lg_S)_{\on{der}}}\cong(^2A_2,6_\rs)$. The distinguished reflection of order $3$ is 
\beqn
t_S=s_{\alpha}s_{\theta\alpha}s_{\alpha}s_{\theta\alpha}\,\text{ and }I_{S}=1.
\eeqn

\subsubsection{$m=5$, $M(w)=\Phi_5$} In this case, $\sum_{i=0}^4\theta^i\alpha=0$. So either $\langle\theta\alpha,\check\alpha\rangle=\langle\theta^4\alpha,\check\alpha\rangle=-1$ or $\langle\theta^2\alpha,\check\alpha\rangle=\langle\theta^3\alpha,\check\alpha\rangle=-1$. We can choose $\alpha$ with $\langle\theta\alpha,\check\alpha\rangle=-1$ so that 
the $K$-equivalence classes are of the form
$$S=\{\theta^i\alpha,i=0,\ldots,4\}\cup\{-\theta^i\alpha,i=0,\ldots,4\}\cup\{\theta^i\beta,i=0,\ldots,4\}\cup\{-\theta^i\beta,i=0,\ldots,4\}\cong A_4$$ where $\beta=\alpha+\theta\alpha$. A root basis of $S$ is $\{\alpha,\theta\alpha,\theta^2\alpha,\theta^3\alpha\}$.  It follows that $\theta|_{(\Lg_S)_{\on{der}}}\cong(A_4,5_\rs)$. The distinguished reflection of order $5$ is 
\beqn
t_S=s_{\alpha}s_{\theta\alpha}s_{\theta^2\alpha}s_{\theta^3\alpha}\,\text{ and }I_{S}=\langle\prod_{i=0}^3\theta^i\check\alpha(\zeta_5^{4-i})\rangle\cong\mu_5.
\eeqn
If we write $S_i=\pm\{\theta^a\beta_i\}\cup\pm\{\theta^a(\beta_i+\theta\beta_i)\}$, $t_i=s_{\beta_i}s_{\theta\beta_i}s_{\theta^2\beta_i}s_{\theta^3\beta_i}$ and $\nu_i:=\prod_{a=0}^3\theta^a\check\beta_i(\zeta_5^{4-a})$, then we have
\beq\label{action-m=5}
I_{t_i}=\langle\nu_i\rangle\text{ and } t_i(\nu_j)=\nu_j\nu_i^{\langle\check\beta_j+2\theta\check\beta_j-2\theta^2\check\beta_j-\theta^3\check\beta_j,\beta_i\rangle}\,.
\eeq

\subsubsection{$m=10$, $M(w)=\Phi_{10}$}  In this case, $\sum_{i=0}^4(-1)^i\theta^i\alpha=0$. We can choose $\alpha$ such that $\langle\theta\alpha,\check\alpha\rangle=1$ and the $K$-equivalence classes are
$$S=\{\pm\theta^i\alpha,i=0,\ldots,4\}\cup \{\pm\theta^i\beta,i=0,\ldots,4\}\cong A_4$$ where $\beta=\alpha-\theta\alpha$. A root basis of $S$ is $\{\alpha,-\theta\alpha,\theta^2\alpha,-\theta^3\alpha\}$. It follows that $\theta|_{(\Lg_S)_{\on{der}}}\cong(^2A_4,5_\rs)$. The distinguished reflection (of order 5) is
\beqn
t_S=(s_{\alpha}s_{\theta\alpha}s_{\theta^2\alpha}s_{\theta^3\alpha})^2,\ \text{ and }I_S=1.
\eeqn

\subsubsection{$m=4$ (and $G=E_8$), $M(w)=\Phi_{4}$}  In this case, we $\langle\theta\alpha,\check\alpha\rangle=0$ for all $\alpha\in R$. The $K$-equivalence classes are
 $$S=\{\alpha,\theta\alpha,-\alpha,-\theta\alpha\}\cong A_1^2.$$  
 It follows that $\theta|_{(\Lg_S)_{\on{der}}}\cong(A_1,2_\rs)$. The distinguished reflection (of order 2) is
\beqn
t_S=s_\alpha s_{\theta\alpha},\ \text{ and }I_S=\langle\check\alpha(-1)\theta\check\alpha(-1)\cong\mu_2.
\eeqn
If we write $S_i=\pm\{\beta_i,\theta\beta_i\}$, $t_i=s_{\beta_i}s_{\theta\beta_i}$ and $\nu_i:=\prod_{a=0}^1\theta^a\check\alpha(-1)$, then we have
\beq\label{action-m=4-E8}
I_{t_i}=\langle\nu_i\rangle\text{ and } t_i(\nu_j)=\nu_j\nu_i^{\langle\check\beta_j+\theta\check\beta_j,\beta_i\rangle}\,.
\eeq

\subsubsection{$m=4$  (and $G=F_4$), $M(w)=\Phi_{4}$}  In this case, we can order the roots so that $\alpha+\beta=-\theta\beta$, $\beta-\theta\beta=\theta\alpha$ and  the $K$-equivalence classes are
$$S=\{\pm\alpha,\pm\theta\alpha\}\cup\{\pm\beta,\pm\theta\beta\}\cong B_2$$ with a root basis $\{\alpha,\beta\}$ (where $\alpha$ is long and $\beta$ is short).  It follows that $\theta|_{(\Lg_S)_{\on{der}}}\cong(B_2,4_\rs)$. The distinguished reflection (of order 4) is
\beqn
t_S=s_\alpha s_{\beta},\ \text{ and }I_S=\langle\check\beta(-1)\rangle\cong\mu_2.
\eeqn
If we write $S_i=\pm\{\delta_i,\theta\delta_i\}\cup\pm\{\beta_i,\theta\beta_i\}$ (where $\delta_i$ is long and $\beta_i$ is short), $t_i=s_{\delta_i}s_{\beta_i}$  and $\nu_i:=\check\beta_i(-1)$, then we have
\beq\label{action-m=4-F4}
I_{t_i}=\langle\nu_i\rangle\text{ and } t_i(\nu_j)=\nu_j\nu_i^{\langle\check\beta_j,\beta_i\rangle}\,.
\eeq

\subsubsection{$m=8$  (and $G=E_8$), $M(w)=\Phi_{8}$}  By~\cite[\S3.4.4]{R},
\begin{enumerate}
\item There are $6$ $K$-equivalence classes of the form $$S=\{\pm\theta^i\beta,i=0,1,2,3\}\cup\{\pm\theta^i\alpha,i=0,1,2,3\}\cup\{\pm\theta^i\gamma,i=0,1,2,3\}\cong D_4$$ where one can choose $\alpha,\beta,\gamma$ such that $\gamma=\beta+\theta\beta-\theta^{3}\beta$, $\alpha=\beta+\theta\beta$, $\langle \check\alpha,\theta\alpha\rangle=0$ and  $\langle \check\beta,\theta\beta\rangle=-1$. A root basis is $\{\beta,\theta\beta,\theta^2\beta,-\gamma\}$ (with $\theta\beta$ the branching node).

We have $\theta|_{(\Lg_S)_\der}\cong(^2D_4,8_\rs)$. The distinguished reflection (of order $4$) is
\beqn
t_S=(s_\gamma s_{\theta^2\beta}s_{\theta\beta}s_\beta s_{\theta^2\beta}s_{\theta\beta})^3\text{ and }I_S=\langle\theta^2\check\beta(-1)\check\gamma(-1)\rangle=\langle\prod_{i=0}^3\theta^i\check\beta(-1)\rangle\cong\mu_2\,.
\eeqn

\item There are $12$ $K$-equivalence classes of the form $$S=\{\pm\theta^i\alpha,i=0,1,2,3\}\cong A_1^4.$$
Note that $\theta$ permutes the factors of $A_1$ cyclicly. So $\theta|_{(\Lg_S)_\der}\cong(A_1,2_\rs)$. The distinguished reflection (of order $2$) is
\beqn
t_S=\prod_{i=0}^3s_{\theta^i\alpha}\text{ and }I_S=\langle\prod_{i=0}^3\theta^i\check\alpha(-1)\rangle\cong\mu_2\,.
\eeqn

\end{enumerate}
If we write $S_j=\pm\{\theta^a\beta_j\}\cup\pm\{\theta^a(\beta_j+\theta\beta_j)\}\cup\pm\{\theta^a\gamma_j\}$, $t_j=(s_{\gamma_j} s_{\theta^2\beta_j}s_{\theta\beta_j}s_\beta s_{\theta^2\beta_j}s_{\theta\beta_j})^3$, $j=1,\ldots,6$, $S_i=\pm\{\theta^a\beta_i\}$, $t_i=\prod_{a=0}^3s_{\theta^a\beta_i}$, $i=7,\ldots,12$, and $\nu_i=\prod_{a=0}^3\theta^a\check\beta_i(-1)$, then we have
\beq\label{action-m=8-E8}
I_{t_i}=\langle\nu_i\rangle\text{ and }t_i(\nu_j)=\nu_j\nu_i^{\langle\sum_{a=0}^3\theta^a\check\beta_j,\beta_i\rangle}\,.
\eeq

\subsubsection{$m=12$  (and $G=E_8$), $M(w)=\Phi_{12}=x^4-x^2+1$.}  By~\cite[\S3.4.5]{R}, 
\begin{enumerate}
\item There are $8$ $K$-equivalence classes of the form $$S=\{\pm\theta^i\alpha,i=0,1,2,3,4,5\}\cong A_2^2$$ where $\langle\check\alpha,\theta\alpha\rangle=0$, $\langle\check\alpha,\theta^4\alpha\rangle=-\langle\check\alpha,\theta^2\alpha\rangle=-1$ and a root basis of $S$ is $\{\alpha,-\theta^2\alpha\}\cup\{\theta\alpha,-\theta^3\alpha\}$. 
Note that $\theta$ permutes the two factors of $A_2$. We have $\theta|_{(\Lg_S)_\der}\cong(^2A_2,6_\rs)$. The distinguished reflection (of order $3$) is
\beqn
t_S=(s_\alpha s_{\theta^2\alpha} s_{\theta\alpha} s_{\theta^3\alpha})^2\text{ and }I_S=1\,.
\eeqn

\item 
There are $6$ $K$-equivalence classes of the form $$S=\{\pm\theta^i\alpha,i=0,\ldots,5\}\cup\{\pm\theta^i\beta,i=0,\ldots,5\}\cong D_4$$
 where one can choose $\alpha,\beta$ such that  $\langle \check\alpha,\theta\alpha\rangle=1$ and  $\beta=\theta\alpha-\alpha$. A root basis of $S$ is $\{\beta,\theta\beta,-\theta^5\alpha,-\theta^3\beta\}$ (where $\theta\beta$ corresponds to the branch node).  
 
 We have $\theta|_{(\Lg_S)_\der}\cong(^3D_4,12_\rs)$. The distinguished reflection (of order $4$) is
\beqn
t_S=(s_{\theta\beta}s_{\beta}s_{\theta^3\beta}s_{\theta\beta}s_{\theta^5\alpha}s_{\theta^3\beta})^3\text{ and }I_S=1.
\eeqn

\end{enumerate}

\subsection{The non-cyclotomic cases}\label{ssec-noncyclo}

From Table~\ref{tab-rankgt2}, the only non-cyclotomic $w\vartheta$ arises in types $(E_6,6_\rs)$, $(^2E_6,4_\rs)$ and $(E_7,6_\rs)$. We study these cases using similar strategy as follows.

\subsubsection{Type $(E_6,6_\rs)$.} \label{sssec-e6-6s}
In this case we have $M(w)=\Phi_6\Phi_3=x^4+x^2+1$. Let $w=w_h^2$. One checks that 
\begin{enumerate}
\item There are 4 $\theta$-orbits of the form $$S=\{\alpha,\theta\alpha,\theta^2\alpha,-\alpha,-\theta\alpha,-\theta^2\alpha\}\cong A_2$$ with a root basis $\{\alpha,-\theta\alpha\}$, 
where $\theta^2\alpha-\theta\alpha+\alpha=0$. We have $\theta|_{(\Lg_S)_\der}\cong(^2A_2,6_\rs)$. The distinguished reflection (of order $3$) is
$
t_S=(s_\alpha  s_{\theta\alpha} )^2\text{ and }I_S=1\,.
$

\item There are 8 $\theta$-orbits of the form $\{\theta^i\alpha,i=0,\ldots,5\}$, where $\theta^3\alpha\neq-\alpha$ and $\theta^4\alpha+\theta^2\alpha+\alpha=0$. For such orbits, let $$S=\{\theta^i\alpha,i=0,\ldots,5\}\cup\{-\theta^i\alpha,i=0,\ldots,5\}\cong A_2^2$$ with a root basis $\{\alpha,\theta^2\alpha\}\cup\{\theta\alpha,\theta^3\alpha\}$. We have $\theta|_{(\Lg_S)_\der}\cong(A_2,3_\rs)$. The distinguished reflection (of order $3$) is
$$
t_S=s_\alpha s_{\theta^2\alpha} s_{\theta\alpha}s_{\theta^3\alpha}\text{ and }I_S=\langle \check\alpha(\zeta_3^2)\theta^2\check\alpha(\zeta_3)\theta\check\alpha(\zeta_3^2)\theta^3\check\alpha(\zeta_3)\rangle\cong\mu_3\,.
$$

\end{enumerate}

\subsubsection{Type $({}^2E_6,4_\rs)$.} In this case $M(w\vartheta)=\Phi_4\Phi_2=x^3+x^2+x+1$. Let $w\vartheta=-w_h^3$. One checks that 
\begin{itemize}
\item There are 6 $\theta$-orbits of the form $\{\alpha,\theta\alpha,-\alpha,-\theta\alpha\}$, where $\theta^2\alpha+\alpha=0$.  
\item There are 12 $\theta$-orbits of the form $\{\theta^i\alpha,i=0,\ldots,3\}$, where $\theta^2\alpha\neq-\alpha$ and $\theta^3\alpha+\theta^2\alpha+\theta\alpha+\alpha=0$.

\end{itemize}

Let $\alpha\in\Phi$ be such that $\theta^2\alpha\neq-\alpha$. Note that $\theta^3\alpha+\theta^2\alpha+\theta\alpha+\alpha=0$ and $\theta^4\alpha=\alpha$ implies that $\langle\theta\alpha,\check\alpha\rangle=\langle\theta^2\alpha,\theta\check\alpha\rangle=-1$ and $\langle\theta^2\alpha,\check\alpha\rangle=0$. Let $$S=\{\theta^i\alpha,i=0,\ldots,3\}\cup\{-\theta^i\alpha,i=0,\ldots,3\}\cup\{\beta,\theta\beta,-\beta,-\theta\beta\}\cong A_3$$ with a root basis $\{\alpha,\theta\alpha,\theta^2\alpha\}$, where $\beta=\alpha+\theta\alpha$.  We have $\theta|_{(\Lg_S)_\der}\cong(A_3,4_\rs)$. The distinguished reflection of order $4$ is
$$t_S=s_\alpha s_{\theta\alpha}s_{\theta^2\alpha}\text{ and }I_S=\langle\check\alpha(\zeta_4)\theta\check\alpha(-1)\theta^2\check\alpha(\zeta_4^{-1})\rangle\cong\mu_4\,.$$

Let us write $S_i=\pm\{\theta^a\beta_i,a=0,\ldots,3\}\cup\pm\{\theta^a(\beta_i+\theta\beta_i),a=0,1\}$, $t_i=s_{\beta_i} s_{\theta\beta_i}s_{\theta^2\beta_i}$ and $\nu_i=\check\beta_i(\zeta_4^3)\theta\check\beta_i(\zeta_4^2)\theta^2\check\beta_i(\zeta_4)$. We have 
\beq\label{action=m=4-2e6}
I_{t_i}=\langle\nu_i\rangle\text{ and }
t_i(\nu_j)=\nu_j\nu_i^{\langle\check\beta_j+2\theta\check\beta_j+3\theta^2\check\beta_j,\beta_i\rangle}.
\eeq

\subsubsection{Type $(E_7,6_\rs)$.}\label{sssec-e76s} In this case we have $M(w)=\Phi_6\Phi_2=x^3+1$. Let $w=w_h^3$. One checks that
\begin{enumerate}
\item There are $12$ $\theta$-orbits of the form $$S=\{\alpha,\theta\alpha,\theta^2\alpha,-\alpha,-\theta\alpha,-\theta^2\alpha\}\cong A_2\text{ with a root basis $\{\alpha,\theta\alpha\}$}$$ where $\theta^2\alpha-\theta\alpha+\alpha=0$ (so $\langle \theta\alpha,\alpha\rangle=-\langle \theta^2\alpha,\alpha\rangle=\langle\theta^2\alpha,\theta\alpha\rangle=1$). We have $\theta|_{(\Lg_S)_\der}\cong(^2A_2,6_\rs)$.  The distinguished reflection (of order $3$) is
$$t_S=s_{\alpha}s_{\theta\alpha}s_{\alpha}s_{\theta\alpha}\text{ and }I_S=1.$$

\item There are $9$ $\theta$-orbits of the form $$S=\{\beta,\theta\beta,\theta^2\beta,-\beta,-\theta\beta,-\theta^2\beta\}\cong A_1^3$$ with with a root basis $\{\beta\}\cup\{\theta\beta\}\cup\{\theta^2\beta\}$.
We have $\theta|_{(\Lg_S)_\der}\cong(A_1,2_\rs)$. The distinguished reflection (of order $2$) is
$$t_S=s_{\beta}s_{\theta\beta}s_{\theta^2\beta}\text{ and }I_S=\langle\check\beta(-1)\theta\check\beta(-1)\theta^2\check\beta(-1)\rangle\cong\mu_2.$$

\end{enumerate}

\section{Stable gradings of rank at least 2}\label{sec-rankgt2}
 
In this section we prove Theorem~\ref{mainthm} for stable gradings of rank $\geq 2$.  
Recall $T=Z_G(\fa)$ and $\theta=\on{Int}(n_{w})\circ\vartheta$. We have $I=T^\theta$. In each case we choose a specific $w$ and display the partition $R=\sqcup S_i$ in Proposition~\ref{prop-distinguished}, the distinguished reflections $t_i$, and $I_i:=I_{S_i}=I_{t_i}$, in the form described in~\S\ref{ssec-cyclotomic} and~\S\ref{ssec-noncyclo}, explicitly. This allows us to determine $\hat I/W$, $W_\chi^0$, $W_\chi$, $\chi|_{I_{i}}$ and hence the polynomials $R_{\chi,t_i}$, using the strategy described in~\S\ref{ssec-str-red}, together with Proposition~\ref{prop-rankone} and Table~\ref{table-rd1}.

\subsection{The gradings with trivial $I$}Suppose that $I=1$.  By Proposition~\ref{prop-rankone} and Table~\ref{table-rd1}, we have the following:

\subsubsection{$(^3D_4,6_\rs)$, $(F_4,6_\rs)$, $(^2E_6,6_\rs)$, and $(E_8,6_\rs)$} We have $R_{s}=\Phi_1^2\Phi_2$ since $\theta|_{(G_s)_{\on{der}}}\cong({^2}A_2,6_\rs)$.
 
\subsubsection{$(E_8,12_\rs)$} We have two conjugacy classes of distinguished reflections in $W$. For the distinguished reflections of order 4, we have $\theta|_{(G_s)_{\on{der}}}\cong({^3}D_4,12_\rs)$ so $R_s=\Phi_1^3\Phi_2$.  For the distinguished reflections of order 3, we have $\theta|_{(G_t)_{\on{der}}}\cong({^2}A_2,6_\rs)$ so $R_t=\Phi_1^2\Phi_2$.

\subsubsection{$(E_8,10_\rs)$} We have  $R_{s}=\Phi_1^3\Phi_2^2$ since $\theta|_{(G_s)_\der}\cong(^2A_5,10_\rs)$.

\subsection{The gradings with non-trivial $I$}We state the results on $\hat I/W, W_\chi^0, W_\chi$  as Lemma~\ref{lem-case1}-Lemma~\ref{lem-3se8}, whose proofs are entirely similar. We will only prove a couple of them in more detail. For the rest, we display the necessary information and leave most of the details to the reader. In what follows we use the Coxeter elements $w_h\in W_G$ chosen in~\S\ref{ssec-dynkin}.

\subsubsection{$(^3D_4,3_\rs)$}Let $\alpha_i=e_i-e_{i+1}$, $i=1,2,3$ and $\alpha_4=e_3+e_4$ be a set of simple roots of $D_4$. We can assume that $\theta|_R=(w\circ\vartheta)^4$, where $w=s_{\alpha_1}s_{\alpha_2}$ and $\vartheta:\alpha_1\mapsto\alpha_4\mapsto\alpha_3,\alpha_2\mapsto\alpha_2$. We have $S_i=\pm\{\beta_i,\theta\beta_i,\theta^2\beta_i\}$, $i=1,\ldots,4$ where 
{\small\bern
\{\beta_i,\theta\beta_i\}=\{1000,0111\},\{0100,-0101\},\{0010,-1110\},\{0110,\,-1211\}.\eern}

\begin{lemma}\label{lem-case1}
We have
$$I=\langle\gamma_1:=\check\alpha_2(\xi_3)\check\alpha_4(\xi_3^2),\,\gamma_2=\check\alpha_1(\xi_3^2)\check\alpha_3(\xi_3)\check\alpha_4(\xi_3^2)\rangle\cong\mu_3^2\,.$$
The action of $W$ on $\hat I$ has two orbits with representatives $\chi_0,\chi_1:(\gamma_1,\gamma_2)\mapsto(\zeta_3,1)$ and
\beqn
W_{\chi_1}= W_{\chi_1}^0=\langle t_4=s_{0110}s_{1211}\rangle\cong\mu_3.
\eeqn
\end{lemma}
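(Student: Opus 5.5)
We compute $I=T^\theta$ directly and then use the explicit distinguished reflections $t_i$ and the formula \eqref{action-m=3} for their action on $I$.

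\emph{Computing $I$.} Since $G$ is simply connected, $T=\{\prod_{i}\check\alpha_i(x_i)\}$ with $x_i\in\bC^*$. The map $\theta|_T$ is induced by the automorphism $(w\vartheta)^4$ of the coroot lattice. Applied to the simple coroots this gives
\[
\theta\check\alpha_1=\check\beta,\qquad \theta\check\alpha_2=-(\check\alpha_2+\check\alpha_4),\qquad \ldots,
\]
which is read off from the listed pairs $\{\beta_i,\theta\beta_i\}$. We then write the condition $\theta(t)=t$ as a system of equations in the $x_i$, namely $\prod_j x_j^{a_{ij}}=x_i$ for the integer matrix $A=(a_{ij})$ of $\theta|_{X_*(T)}$. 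Since $\theta$ is elliptic of order $3$ with characteristic polynomial $\Phi_3^2$, the group $T^\theta$ is finite. Its order is $\det(1-A)=\Phi_3(1)^2=9$.

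Next we check that $\gamma_1$ and $\gamma_2$ are $\theta$-fixed; this is a short computation with cube roots of unity. We also check that they are independent, which holds by comparing exponents. Hence $I=\langle\gamma_1,\gamma_2\rangle\cong\mu_3^2$. As a cross-check, the $\nu_i=\check\beta_i(\zeta_3^2)\theta\check\beta_i(\zeta_3)$ from \eqref{action-m=3} should be expressed in terms of $\gamma_1,\gamma_2$. This also confirms part (1) of the proposition, that the $I_{t_i}=\langle\nu_i\rangle$ generate $I$.

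\emph{Orbits.} By Proposition~\ref{prop-distinguished}, $W=G_4$ is generated by $t_1,\dots,t_4$. Using \eqref{action-m=3},
\[
t_i(\nu_j)=\nu_j\nu_i^{\langle\check\beta_j-\theta\check\beta_j,\beta_i\rangle},
\]
we compute the action of each $t_i$ on $I$, and dually on $\hat I\cong\mu_3^2$. The $8$ nontrivial characters must form a single orbit. This is plausible because $W$ acts on $I\otimes\mathbb F_3\cong\mathbb F_3^2$ through $G_4\to SL_2(\mathbb F_3)$, which is transitive on nonzero vectors. In practice we only need to exhibit enough $t_i$-moves connecting the nontrivial characters to $\chi_1$. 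Since there are two orbits, $|W_{\chi_1}|=24/8=3$.

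\emph{Stabiliser.} A character $\chi$ is fixed by $t_i$ exactly when $\chi(\nu_i)^{c}=1$ for all the relevant exponents $c$. Concretely, $t_i$ fixes $\chi$ iff $\chi|_{I_{t_i}}=1$, or the pairings vanish mod $3$. We evaluate $\chi_1$ on each $\nu_i$ and find that $\chi_1(\nu_4)=1$, using the expression of $\nu_4$ in the $\gamma$'s. Hence $t_4\in W_{\chi_1}$. The group $\langle t_4\rangle\cong\mu_3$ already has order $|W_{\chi_1}|$, so it is all of $W_{\chi_1}$, and it is generated by a reflection, so $W_{\chi_1}^0=W_{\chi_1}$. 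Finally, $t_4=s_{0110}s_{1211}$ is the formula $t_S=s_\beta s_{\theta\beta}$ for $S_4$; the sign of $-1211$ is irrelevant since $s_{-\gamma}=s_\gamma$.

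\emph{Main obstacle.} The main obstacle is the bookkeeping: computing $\theta$ on the coroots correctly, including the diagram automorphism, and the pairings $\langle\check\beta_j-\theta\check\beta_j,\beta_i\rangle$ without sign errors. We would first verify the pairings against the relation $t_4\chi_1=\chi_1$ and the orbit count $8$ as consistency checks.
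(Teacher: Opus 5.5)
Your proposal is correct and follows essentially the same route as the paper. The paper computes $\theta$ on the simple coroots to get $I=T^\theta$. It then uses \eqref{action-m=3} to express the $\nu_i$ in $\gamma_1,\gamma_2$ (in particular $\nu_4=\gamma_2$) and computes the $t_i$-action to see that the eight nontrivial characters form one orbit. It concludes $|W_{\chi_1}|=24/8=3=|\langle t_4\rangle|$, and your argument has the same steps; the only addition is your check $|T^\theta|=\det(1-A)=\Phi_3(1)^2=9$, which makes explicit why $\gamma_1,\gamma_2$ generate all of $I$, a point the paper leaves implicit.
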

\begin{proof}
Since $\theta:\check\alpha_1\mapsto\sum_{i=2}^4\check\alpha_i,\check\alpha_2\mapsto-\check\alpha_2-\check\alpha_4,\check\alpha_3\mapsto-\sum_{i=1}^3\check\alpha_i,\check\alpha_4\mapsto\check\alpha_2$,
we have $I=T^\theta$ as claimed. 
Using~\eqref{action-m=3} one checks that
\begin{align}\label{eqn-I1}
I_{i}=\langle\nu_i\rangle,\ \nu_1= \gamma_1\gamma_2,\ \nu_2=\gamma_1,\ \nu_3=\gamma_1^2\gamma_2,\ \nu_{4}=\gamma_2
\end{align} 
\begin{align*}
\text{ and }\quad&(t_i(\gamma_1),t_i(\gamma_2))=(\gamma_1^2\gamma_2,\gamma_1^2),\ (\gamma_1,\gamma_1^2\gamma_2), (\gamma_2,\gamma_1^2\gamma_2^2),\ (\gamma_1\gamma_2,\gamma_2)\,.
\end{align*}
It follows that $W_{\chi_1}^0=\langle t_4\rangle$ and $W.\chi_1=\hat I\setminus\{\chi_0\}$. Hence $|W_{\chi_1}|=|W|/8=3$ which implies that $W_{\chi_1}=W_{\chi_1}^0$.
\end{proof}

It follows from~\eqref{eqn-I1} that $\chi_1|_{I_{4}}=1$ and $\chi_1|_{I_{i}}$ is of order $3$, $i=1,2,3$. 
Since $\theta|_{(G_{t_i})_{\on{der}}}\cong(A_2,3_s)$, we conclude that $R_{\chi,t_i}=(z-1)^3$ if $t_i\in W_\chi$ and  $R_{\chi,t_i}=z^3-1$ otherwise.

\subsubsection{$(F_4,4_\rs)$}

Let $w=w_h^3$.  
We have $S_i=\pm\{\delta_i,\theta\delta_i\}\cup\pm\{\beta_i,\theta\beta_i\}$, $i=1,\ldots,6$, where
\begin{align*}
(\delta_i)=1000,0100,1100,1120,1220,0122,\ (\beta_i)=0111,0010,0121,0001,0011,1110.
\end{align*}

\begin{lemma}We have
$$I=\langle\gamma_1:=\check\alpha_3(-1),\,\gamma_2:=\check\alpha_4(-1)\rangle\cong\mu_2^2.$$
The action of $W$ on $\hat I$ has $2$ orbits with representatives $\chi_0,\chi_1:(\gamma_1,\gamma_2)\mapsto(-1,1)$ and 
\beqn
W_{\chi_1}=W_{\chi_1}^0=\langle t_3,t_4,t_1^2,t_2^2,t_5^2,t_6^2\rangle\cong G_{4,1,2}.
\eeqn
\end{lemma}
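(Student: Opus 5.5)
I would follow the template of the proof of Lemma~\ref{lem-case1}, now using the $m=4$, $G=F_4$ cyclotomic case of \S\ref{ssec-cyclotomic} and its formula~\eqref{action-m=4-F4}.

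\textbf{Step 1: compute $I$.} First I compute the action of $\theta|_T=w=w_h^3$ on the simple coroots. From the list $(w_h\alpha_i)$ in \S\ref{ssec-dynkin}, I get $w_h$ on roots, then on coroots by rescaling short and long roots appropriately, and then cube it. I then solve $\theta(x)=x$ for $x=\prod_i\check\alpha_i(x_i)\in T$. Since $w$ is elliptic with characteristic polynomial $\Phi_4^2$, one has $|T^w|=|\det(1-w)|=\Phi_4(1)^2=4$. It therefore suffices to check that $\check\alpha_3(-1)$ and $\check\alpha_4(-1)$ are $\theta$-fixed and distinct; this gives $I=\langle\gamma_1,\gamma_2\rangle\cong\mu_2^2$.

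\textbf{Step 2: express the $\nu_i$ and compute the $t_i$-action.} Next, for each of the six sets $S_i$ with short root $\beta_i\in\{0111,0010,0121,0001,0011,1110\}$, I write $\nu_i=\check\beta_i(-1)$ in terms of $\gamma_1,\gamma_2$. This amounts to reducing the coefficients of $\check\beta_i$ in the simple coroots modulo $2$, and keeping only the $\check\alpha_3,\check\alpha_4$ components up to the relations in $I$. One expects $\nu_2=\gamma_1$ and $\nu_4=\gamma_2$, with the others among $\gamma_1,\gamma_2,\gamma_1\gamma_2$.

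Then, using~\eqref{action-m=4-F4}, $t_i(\nu_j)=\nu_j\nu_i^{\langle\check\beta_j,\beta_i\rangle}$, I compute $t_i(\gamma_1)$ and $t_i(\gamma_2)$ for all $i$. This gives the dual action on $\hat I$.

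\textbf{Step 3: orbits and stabilisers.} There are three nontrivial characters. I check from the action that the $t_i$ permute them transitively, so $\hat I/W=\{\chi_0,\chi_1\}$ and $|W\cdot\chi_1|=3$. Hence $|W_{\chi_1}|=|G_8|/3=96/3=32=|G_{4,1,2}|$.

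To identify $W_{\chi_1}^0$, I use that the reflections of $W$ are $t_i,t_i^2,t_i^3$. A power $t_i^k$ fixes $\chi_1$ iff $\chi_1(\nu_i^{k\langle\check\beta_j,\beta_i\rangle})$ is trivial for all $j$. Since $\nu_i$ has order $2$, $t_i^2$ always fixes every character, so all six $t_i^2$ lie in $W_{\chi_1}$. The computation should show that $t_i$ itself fixes $\chi_1$ exactly when $i=3,4$, which I expect is equivalent to $\chi_1(\nu_i)=1$.

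So $W_{\chi_1}^0=\langle t_3,t_4,t_1^2,t_2^2,t_5^2,t_6^2\rangle$. This group contains $2$ order-$4$ cyclic reflection subgroups and $6$ further order-$2$ reflections, matching the reflection data of $G_{4,1,2}$. Since $W_{\chi_1}^0\subset W_{\chi_1}$ and $|W_{\chi_1}|=32$, it suffices to show $|W_{\chi_1}^0|\ge 32$. I would get this by recognising $W_{\chi_1}^0$ as $G_{4,1,2}$: the $\langle t_3\rangle,\langle t_4\rangle$ commuting (or generating $\mu_4^2$) together with the order-$2$ reflections swapping coordinates.

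\textbf{Main obstacle.} I expect the main obstacle to be this last identification, i.e.\ verifying that the listed generators generate a group of order exactly $32$ isomorphic to $G_{4,1,2}$. I would try to settle it by computing the action of the generators on $\fa$ via the explicit Chevalley-basis formulas, as was done for $t_S$ in the $m=3$ case. An alternative is to use that the reflection subgroups of $G_8$ containing two order-$4$ reflection lines and all order-$2$ reflections are of type $G(4,1,2)$, by the Shephard–Todd classification.
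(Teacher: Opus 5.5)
Your proposal is correct and follows essentially the paper's route: determine $I=T^\theta$ from the action of $w=w_h^3$ on the simple coroots, write each $\nu_i=\check\beta_i(-1)$ in terms of $\gamma_1,\gamma_2$, compute the $t_i$-action via \eqref{action-m=4-F4}, and then read off the single nontrivial orbit and the stabiliser using the count $|W_{\chi_1}|=96/3=32$, as in the proof of Lemma~\ref{lem-case1}. Two small remarks: outside $\langle t_3\rangle\cup\langle t_4\rangle$ there are only four order-$2$ reflections in $W^0_{\chi_1}$, namely $t_1^2,t_2^2,t_5^2,t_6^2$ (not six); and your ``main obstacle'' is easy here, because the $B_2$ subsystems $S_3$ (basis $1100,0121$) and $S_4$ (basis $1120,0001$) are mutually orthogonal, so $t_3$ and $t_4$ commute, $\langle t_3,t_4\rangle\cong\mu_4^2$ has order $16$, and adjoining $t_1^2\notin\langle t_3,t_4\rangle$ gives $|W^0_{\chi_1}|\geq 32=|W_{\chi_1}|$.
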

We have
$
w:\check\alpha_1\mapsto\check\alpha_1+2\check\alpha_2+\check\alpha_3+\check\alpha_4,\ \check\alpha_2\mapsto\check\alpha_2+\check\alpha_3,\,\check\alpha_3\mapsto-2\check\alpha_2-\check\alpha_3,\,\check\alpha_4\mapsto-2\check\alpha_1-2\check\alpha_2-2\check\alpha_3-\check\alpha_4.
$  
Let $\Gamma_i=(t_i(\gamma_1),t_i(\gamma_2))$. Using~\eqref{action-m=4-F4} one checks that
\begin{align*}
&I_{1}=I_{5}=\langle\gamma_1\gamma_2\rangle,&& I_{2}=I_{6}=\langle\gamma_1\rangle,&&I_{3}=I_{4}=\langle\gamma_2\rangle\,\\
&\Gamma_1=\Gamma_5=(\gamma_2,\gamma_1)&& \Gamma_2=\Gamma_6=(\gamma_1
,\gamma_1\gamma_2)&& \Gamma_3=\Gamma_4=(\gamma_1\gamma_2,\gamma_2).
\end{align*}
It follows that $\chi_1|_{I_{t_i}}=1$, $i=3,4$, and $\chi_1|_{I_{t_i}}$ is of order $2$, $i=1,2,5,6$.
Since $\theta|_{(G_{t_i})_\der}\cong(B_2,4_\rs)$, $i=1,\ldots,6$, we conclude that $R_{\chi_0,t_i}=R_{\chi_1,t_3}=R_{\chi_1,t_4}=(z-1)^3(z+1)$, and $R_{\chi_1,t_i}=(z^2-1)^2$, $i=1,2,5,6$.

\subsubsection{$(F_4,3_\rs)$} 
Let $w=w_h^4$. 
We have $S_i=\pm\{\beta_i,\theta\beta_i,\theta^2\beta_i\}$, $i=1,\ldots,8$, where
{\small\beqn
(\beta_i)= 1000 ,\,  0100 ,\,  1100 ,\, 1220 ,\,  0010 ,\,  0001 ,\, 0011 ,\,  0121 .
\eeqn}
 
\begin{lemma}We have
$$I=\langle\gamma_1:=\check\alpha_1(\xi_3^2)\check\alpha_2(\xi_3)\check\alpha_3(\xi_3),\ \gamma_2:=\check\alpha_1(\xi_3^2)\check\alpha_2(\xi_3^2)\check\alpha_4(\xi_3)\rangle\cong\mu_3^2.$$
The action of $W$ on $\hat I$ has $2$ orbits with representatives $\chi_0,\chi_1:(\gamma_1,\gamma_2)\mapsto(\zeta_3, 1)$ and 
\beqn
W_{\chi_1}=W_{\chi_1}^0=\langle t_4\rangle\times\langle t_5\rangle\cong\mu_3\times \mu_3.
\eeqn
\end{lemma}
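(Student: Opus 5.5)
We follow the template of the proof of Lemma~\ref{lem-case1}, with $\theta|_T=w=w_h^4$ and $w_h=s_{\alpha_1}s_{\alpha_2}s_{\alpha_3}s_{\alpha_4}$ as in \S\ref{ssec-dynkin}.

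\textbf{Step 1: compute $I=T^\theta$.} From the listed values $(w_h\alpha_i)$ we obtain the action of $w_h$, and hence of $w=w_h^4$, on the simple coroots $\check\alpha_1,\ldots,\check\alpha_4$. A general element of $T$ is $x=\prod_i\check\alpha_i(x_i)$. The fixed-point equations $w(x)=x$ form a system of multiplicative equations in the $x_i$. Since $w$ is elliptic with characteristic polynomial $\Phi_3^2$, we have $|T^w|=|\det(1-w)|=\Phi_3(1)^2=9$. It therefore suffices to check that $\gamma_1$ and $\gamma_2$ are $w$-fixed and independent of order $3$; this gives $I=\langle\gamma_1,\gamma_2\rangle\cong\mu_3^2$.

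\textbf{Step 2: compute the local groups and the $W$-action.} For each of the eight classes $S_i=\pm\{\beta_i,\theta\beta_i,\theta^2\beta_i\}$ we first compute $\theta\beta_i$. We then use \eqref{action-m=3} to obtain two things:
\begin{itemize}
\item $I_i=\langle\nu_i\rangle$ with $\nu_i=\check\beta_i(\zeta_3^2)\,\theta\check\beta_i(\zeta_3)$, expressed in terms of $\gamma_1,\gamma_2$;
\item the action $t_i(\gamma_j)$, obtained by writing each $\gamma_j$ as a product of $\nu$'s and applying $t_i(\nu_j)=\nu_j\nu_i^{\langle\check\beta_j-\theta\check\beta_j,\beta_i\rangle}$.
\end{itemize}
We expect the eight subgroups $I_i$ to run through the four order-$3$ subgroups of $\mu_3^2$, each occurring twice. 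In particular $I_4$ and $I_5$ should both be contained in $\ker\chi_1$, which is $\langle\gamma_2\rangle$ up to relabeling the generators.

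\textbf{Step 3: orbits and stabilisers.} Fixing $\chi_0$ is automatic. From the explicit action we check that the $W$-orbit of $\chi_1$ contains all eight nontrivial characters. It is enough to find a few $t_i$ moving $\chi_1$ around transitively, since an orbit containing at least $5$ of the $8$ elements on which $W$ acts must be everything, given that $|W|=72$ forces stabiliser sizes. Then $|W_{\chi_1}|=|G_5|/8=9$.

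A reflection $t_i$ with $\chi_1|_{I_i}=1$ fixes $\chi_1$: indeed $t_i(x)x^{-1}\in I_i$ for $x\in I$, by \eqref{action-m=3}. This applies to $t_4$ and $t_5$. We verify that $t_4$ and $t_5$ commute, which holds because $S_4\perp S_5$, i.e.\ $\langle\check\beta_4,\beta_5\rangle=\langle\check\beta_4,\theta\beta_5\rangle=0$. Hence $\langle t_4\rangle\times\langle t_5\rangle\cong\mu_3^2$ has order $9$, so it equals $W_{\chi_1}$. Since it is generated by reflections, $W_{\chi_1}^0=W_{\chi_1}$.

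\textbf{Main obstacle.} The principal difficulty is the bookkeeping in Steps 1 and 2: computing $w_h^4$ on coroots, finding the $\theta$-orbits of the listed $\beta_i$, and carrying out the pairings correctly. Everything after that is forced by the counting argument.
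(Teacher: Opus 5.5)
Your proposal follows essentially the paper's route, and it works once one faulty counting claim in Step 3 is replaced. The shared steps are: compute $w=w_h^4$ on coroots, use \eqref{action-m=3} to get $I_i=\langle\nu_i\rangle$ and the action $t_i(\gamma_j)$, then argue as in Lemma~\ref{lem-case1}. There, the nontrivial characters form one orbit, so $|W_{\chi_1}|=72/8=9$. This order is filled by $\langle t_4\rangle\times\langle t_5\rangle$, since $I_4=I_5=\langle\gamma_2\rangle=\ker\chi_1$.

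Your two additions are correct. Identifying $I$ via $|T^w|=|\det(1-w)|=\Phi_3(1)^2=9$ is valid. The direct product from $S_4\perp S_5$ also holds, since $\theta\beta_5=-1121$ is orthogonal to $\beta_4=1220$.

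The faulty claim is the shortcut justification. Divisibility by $|W|=72$ does not force an orbit meeting $5$ of the $8$ nontrivial characters to be everything. Since $6\mid 72$, an orbit of size $6$ together with smaller orbits is not excluded by counting. The threshold that works is $7$, because $7\nmid 72$.

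The data reaches this threshold at once. Applying $t_1,t_2,t_3,t_6,t_7,t_8$ to $\chi_1$ gives the six characters with $\chi(\gamma_2)\neq 1$. The remaining character $\chi_1^{-1}$ is the image of $\chi_1$ under $-1\in W$. This element lies in $W$ because it is central in $W_G$ and so commutes with $w$, and it acts on $I$ by inversion.

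An alternative fix: each $t_i$ acts on $I\cong\mathbb{F}_3^2$ as a transvection with centre $I_i$. Transvections with two different centres generate $SL_2(\mathbb{F}_3)$, which is transitive on the nonzero vectors of $I$ and of $\hat I$.
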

We have
$
w:\check\alpha_1\mapsto\check\alpha_2+\check\alpha_3,\ \check\alpha_2\mapsto\check\alpha_1+\check\alpha_2+\check\alpha_3+\check\alpha_4,\,\check\alpha_3\mapsto-2\check\alpha_1-2\check\alpha_2-2\check\alpha_3-\check\alpha_4,\,\check\alpha_4\mapsto-2\check\alpha_2-\check\alpha_3-\check\alpha_4.
$
 Using~\eqref{action-m=3}, one checks that
 \bern
I_{i}=\langle\nu_i\rangle&& \nu_i=\gamma_1,\gamma_1\gamma_2,\gamma_1^2\gamma_2,\gamma_2,\gamma_2^2,\gamma_1^2\gamma_2,\gamma_1^2,\gamma_1\gamma_2\\
&&t_i(\gamma_1)=\gamma_1,\gamma_2^2,\gamma_1^2\gamma_2^2,\gamma_1\gamma_2,\gamma_1\gamma_2,\gamma_2,\gamma_1,\gamma_1^2\gamma_2\\&&
t_i(\gamma_2)=\gamma_1\gamma_2,\gamma_1\gamma_2^2,\gamma_1,\gamma_2,\gamma_2,\gamma_1^2\gamma_2^2,\gamma_1^2\gamma_2,\gamma_1^2.
\eern
It follows that $\chi_1|_{I_{t_i}}$ is of order $3$, if $t_i\notin W_{\chi_1}^0$ and $\chi_1|_{I_{t_i}}=1$ otherwise. 
Since $\theta|_{(G_{t_i})_\der}\cong(A_2,3_\rs)$ we conclude that $R_{\chi,t_i}=z^3-1$, if $t_i\notin W_{\chi}^0$,  and $R_{\chi,t_i}=(z-1)^3$, if $t_i\in W_{\chi}^0$.

\subsubsection{$(E_6,6_\rs)$}
Let $w=w_h^2$. Then $$(w\alpha_i)_{i=1,\ldots,6}=010111,-101111,-011211,112321,-111210,111100.$$  
\begin{lemma}We have
\beqn
I= Z_G\cong\mu_3.
\eeqn
The little Weyl group $W$ acts on $\hat I$ trivially. 
\end{lemma}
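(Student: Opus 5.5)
We want to show that for $(E_6,6_\rs)$ with $\theta|_T=w=w_h^2$ we have $I=T^\theta=Z_G\cong\mu_3$, and that $W$ acts trivially on $\hat I$.

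\emph{Inclusion $Z_G\subset I$.} Since $\theta=\on{Int}(n_w)$ is inner, $\theta$ fixes the center pointwise. Also $Z_G\subset T$. Hence $Z_G\subset T^\theta=I$.

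\emph{Equality $I=Z_G$.} Write $x=\prod_{i=1}^6\check\alpha_i(x_i)\in T$. Then
\beqn
\theta(x)=\prod_i (w\check\alpha_i)(x_i),
\eeqn
so we need the action of $w$ on the coroot lattice. Because $E_6$ is simply laced, $w\check\alpha_i$ is the coroot of $w\alpha_i$, and the roots $w\alpha_i$ are listed just above. Write each coroot $w\check\alpha_i$ in the basis $\check\alpha_j$, collect exponents, and obtain the system of multiplicative equations $x=\theta(x)$ in the torus $(\bC^*)^6$.

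A more conceptual route gives the order of $T^\theta$ directly. Since $w$ is elliptic, $T^w$ is finite, and
\beqn
|T^w|=|\det(1-w)|_{X_*(T)}|=\Phi(w)(1)=\Phi_6(1)^2\,\Phi_3(1)=1\cdot 1\cdot 3=3,
\eeqn
using the characteristic polynomial $\Phi_6^2\Phi_3$ of $w=E_6(a_2)$ from Table~\ref{tab-rankgt2}. A group of order $3$ containing $Z_G\cong\mu_3$ equals $Z_G$, so $I=Z_G$. If one prefers to avoid quoting the finite-torus order formula, the same conclusion follows from checking that the Smith normal form of $1-w$ on the coroot lattice has invariant factors $1,\dots,1,3$.

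\emph{Trivial $W$-action.} The group $W\subset N_{G_0}(\fa)/I$ acts on $I$ by conjugation by elements of $G_0\subset G$. Since $I=Z_G$ is central, this conjugation action on $I$ is trivial, and hence so is the induced action on $\hat I$.

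The main point needing care is the determinant computation. One must confirm that $\Phi(w)(1)=3$ is the right count: $\det(1-w)$ on $X_*(T)$ equals the value at $1$ of the characteristic polynomial, up to sign. As a sanity check, the explicit image of $w\check\alpha_i$ should fix the generator of $Z_G$ given in \S\ref{ssec-dynkin}.
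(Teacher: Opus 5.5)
Your proof is correct, and it takes a different route from the paper. The paper only lists the action of $w=w_h^2$ on the simple roots and leaves to the reader the direct solution of $\theta(x)=x$ for $x=\prod_i\check\alpha_i(x_i)$ in coordinates. You instead argue in two steps. First, $\theta=\on{Int}(n_w)$ is inner, so $Z_G\subset T^\theta=I$. Second, you count $|T^w|=|\det(1-w\mid X_*(T))|=\Phi_6(1)^2\Phi_3(1)=3$, justified via the Smith normal form of $1-w$, which forces $I=Z_G$. In both treatments, the triviality of the $W$-action on $\hat I$ follows from the centrality of $I$.

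Your count is coordinate-free and essentially instantaneous. It also checks the order of $I$ uniformly from the characteristic polynomials in Table~\ref{tab-rankgt2}: for example, $\Phi_6(1)^3\Phi_2(1)=2$ gives $I=Z_G$ for $(E_7,6_\rs)$ by the same reasoning. Its limitation is that it yields only the order. So it pins down the group only when, as here, a subgroup of that order is already known, or once the full Smith normal form is computed. It also does not produce explicit generators in the coordinates $\check\alpha_i(\cdot)$.

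Those explicit generators are what the paper needs in the other cases, to compute the subgroups $I_{t_i}$ and the action of the $t_i$ via formulas such as \eqref{action-m=3}. In the present case nothing is lost, since the generator of $Z_G$ is already recorded in \S\ref{ssec-dynkin}.
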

By the discussion in~\S\ref{sssec-e6-6s}, we have $S_i=\pm\{\beta_i,\theta\beta_i,\theta^2\beta_i\}$, $i=1,\ldots,4$, and $S_i=\pm\{\theta^j\beta_i,j=0,\ldots,5\}$, $i=5,\ldots,8$, where
{\small\begin{align*}
(\beta_i)=& 010000 , 000100 , 010100 , 011110 , 100000 , 001000 , 001100 , 101100 \,.
\end{align*}}
One checks that 
$
I_{i}=1,\,i=1,\ldots,4,\ I_{i}=Z_G,\,i=5,\ldots,8.
$
It follows that $\chi_j|_{I_{t_i}}=1$, $i=1,2,3,4$, and $\chi_j|_{I_{t_i}}$ is of order $3$, $i=5,\ldots,8$, for $j=1,2$.
We have
$\theta|_{(G_{t_i})_{\on{der}}}\cong({}^2A_2,6_\rs), i=1,2,3,4,$ and $\theta|_{(G_{t_i})_{\on{der}}}\cong(A_2,3_\rs),\, i=5,6,7,8$.  So
$R_{\chi_j,t_i}=(z-1)^2(z+1),$ $i=1,\ldots,4,\,j=0,1,2$, and $R_{\chi_0,t_i}=(z-1)^3, R_{\chi_j,t_i}=z^3-1,\,i=5,\ldots,8,\,j=1,2.$

Note that $\langle t_i\mid\chi_1|_{I_{t_i}}=1\rangle=\langle t_i,i=1,2,3,4\rangle\cong G_4\cong W_{\chi}^{\en}$.

\subsubsection{$(E_6,3_\rs)$}Let $w=w_h^4$. Then
\beqn
(w\alpha_i)_{i=1,\ldots,6}=011100,-111111,-112210,112221,-011221,010110.
\eeqn
We have $S_i=\pm\{\beta_i,\theta\beta_i,\theta^2\beta_i\}$, $i=1,\ldots,12$, where
{\small\begin{align*}
(\beta_i)= &100000 , 010000 , 001000 , 000100 , 000010 , 000001 ,\\& 101000 , 010100 , 001100 , 101100 , 000111 , 011110\, .
\end{align*}}
\begin{lemma}
We have
\beqn
I\cong\langle\gamma_1,\gamma_2,\gamma_3\rangle\cong\mu_3^3
\eeqn
where
\beqn
\gamma_1=\check\alpha_1(\xi_3)\check\alpha_2(\xi_3)\check\alpha_3(\xi_3)\check\alpha_6(\xi_3^2),\ \gamma_2=\check\alpha_1(\xi_3)\check\alpha_4(\xi_3)\check\alpha_6(\xi_3),\ \gamma_3=\check\alpha_1(\xi_3^2)\check\alpha_2(\xi_3)\check\alpha_5(\xi_3)\check\alpha_6(\xi_3).
\eeqn
Define $\chi_i\in\hat I$ by
\beqn
\chi_1:(\gamma_1,\gamma_2,\gamma_3)\mapsto(\zeta_3,1,1),\,\chi_2:(\gamma_1,\gamma_2,\gamma_3)\mapsto(1,\zeta_3,1)\,\chi_3:(\gamma_1,\gamma_2,\gamma_3)\mapsto(1,1,\zeta_3).
\eeqn
The $W$ action on $\hat I$ has four orbits with representatives $\chi_0,\chi_1,\chi_2,\chi_3$. Moreover
\begin{align*}
&W_{\chi_1}^0=\langle t_{10},t_{12}\rangle\cong G_4,\ W_{\chi_2}^0=\langle t_8,t_{10},t_{11}\rangle\cong (\mu_3)^3,\ \ W_{\chi_3}^0=\langle t_{11},t_{12}\rangle\cong G_4,\\
&W_{\chi_i}=\langle\sigma_i\rangle\ltimes W_{\chi_i}^0\cong\mu_3\ltimes W_{\chi_i}^0,\,i=1,2,3.
\end{align*}

\end{lemma}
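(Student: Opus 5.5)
The plan follows the pattern of the proof of Lemma~\ref{lem-case1}. First I will compute $I=T^\theta$ directly. From $(w\alpha_i)$ I will write down the action of $w=w_h^4$ on the simple coroots. Since $w$ has order $3$ and characteristic polynomial $\Phi_3^3$, $T^w$ is $3$-torsion of rank $3$; in fact $T^w\cong\mu_3^3$ because $\Phi_3(1)=3$ and there are three factors. I will then check that each of $\gamma_1,\gamma_2,\gamma_3$ is $w$-fixed by verifying $\langle$exponent vector$\rangle\cdot(w-1)\equiv 0$ modulo $3$ on the coroot lattice, and that they are independent. This gives $I=\langle\gamma_1,\gamma_2,\gamma_3\rangle$. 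As a consistency check, $Z_G\subset I$: the generator $\check\alpha_1(\xi_3)\check\alpha_3(\xi_3^2)\check\alpha_5(\xi_3)\check\alpha_6(\xi_3^2)$ should be expressible in the $\gamma_i$.

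Next, for each of the twelve $S_i$ with the listed $\beta_i$, I will compute $\theta\beta_i=w\beta_i$ and $\nu_i=\check\beta_i(\zeta_3^2)\,\theta\check\beta_i(\zeta_3)$. I will express each $\nu_i$ as a monomial in $\gamma_1,\gamma_2,\gamma_3$, so that $I_{t_i}=\langle\nu_i\rangle$. Using~\eqref{action-m=3}, I will then write the action of each $t_i$ on $I$ as a $3\times3$ matrix over $\mathbb F_3$. Dually this gives the action on $\hat I\cong\mathbb F_3^3$: each $t_i$ acts by $\chi\mapsto\chi\cdot\chi(\nu_i)^{\pm1}$-type transvections along the vector $\chi|_{\text{dual of }\nu_i}$. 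In particular $t_i\chi=\chi$ iff $\chi(\nu_i)=1$, since the order-$3$ element $t_i$ acts on $I$ by $x\mapsto x\,\nu_i^{\lambda_i(x)}$ for a linear form $\lambda_i$.

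From this I will determine the reflection subgroups. For each $\chi_j$, $W_{\chi_j}^0$ is generated by the $t_i$ with $\chi_j(\nu_i)=1$ (together with the other reflections $t_i^2$, which give nothing new). I expect this to single out $\{t_{10},t_{12}\}$, $\{t_8,t_{10},t_{11}\}$ and $\{t_{11},t_{12}\}$ respectively. I will identify $\langle t_{10},t_{12}\rangle\cong G_4$ by checking the braid relation $t_{10}t_{12}t_{10}=t_{12}t_{10}t_{12}$; equivalently, $S_{10}\cup S_{12}$ spans a $D_4$ on which $\theta$ induces $(^3D_4,3_\rs)$, consistent with the endoscopic computation. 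I will identify $\langle t_8,t_{10},t_{11}\rangle\cong\mu_3^3$ by checking that the corresponding $S_i$ are mutually orthogonal, so the $t_i$ commute.

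For the orbits, I will compute the orbit sizes by hand or with the explicit matrices, with the target $|\hat I|=27=1+|W\chi_1|+|W\chi_2|+|W\chi_3|$. Since $|G_{25}|=648$, I need $|W_{\chi_i}|$ with $648/|W_{\chi_1}|+648/|W_{\chi_2}|+648/|W_{\chi_3}|=26$. With $|W_{\chi_1}|=|W_{\chi_3}|=72=3\cdot24$ and $|W_{\chi_2}|=81=3\cdot27$, the orbit sizes are $9+8+9=26$, which is consistent. So it suffices to show three things: the orbits of $\chi_1,\chi_2,\chi_3$ are distinct; they have sizes at least $9,8,9$; and each $W_{\chi_i}$ contains an element $\sigma_i$ of order $3$ outside $W_{\chi_i}^0$ normalizing it. I will produce $\sigma_i$ explicitly as a product of two or three $t_k$'s fixing $\chi_i$. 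The counting then forces $|W_{\chi_i}|=3|W^0_{\chi_i}|$ and $W_{\chi_i}=\langle\sigma_i\rangle\ltimes W^0_{\chi_i}$.

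The main obstacle is this last step: finding $\sigma_i$ and proving that the three orbits are distinct and exhaust $\hat I\setminus\{\chi_0\}$. To handle it I will use two invariants. The restriction $\chi|_{Z_G}$ is $W$-invariant, since $Z_G$ is central and fixed by $N_{G_0}(\fa)$. This should separate $\chi_2$, which is trivial on $Z_G$, from $\chi_1$ and $\chi_3$, which take conjugate values on $Z_G$. Distinguishing $\chi_1$ from $\chi_3$ then follows from the different values on $Z_G$. After that, the orbit sizes follow by running the explicit transvections starting from $\chi_j$.
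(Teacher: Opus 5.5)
Your plan is the paper's computation up to the point where $W^0_\chi$ is read off from the condition $\chi(\nu_i)=1$: compute $I=T^w$, the $\nu_i$, and the action of the $t_i$ via \eqref{action-m=3}. The difference is in the orbit and stabiliser step. The paper realises $W\cong G_{25}$ by explicit matrices of $t_{10},t_{11},t_{12}$ on $\fa$, writes every $t_i$ as a word in these, exhibits $\sigma_1,\sigma_2,\sigma_3$, and leaves the orbit decomposition to direct checking. You instead use the $W$-invariant $\chi\mapsto\chi|_{Z_G}$ together with counting. That works, and it gives a bit more than you state. The generator of $Z_G$ equals $\gamma_1^2\gamma_3$, so $\chi_1,\chi_2,\chi_3$ take the values $\zeta_3^2,1,\zeta_3$ on it. The three fibres of restriction to $Z_G$ in $\hat I\setminus\{\chi_0\}$ therefore have sizes exactly $9,8,9$. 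So distinctness and the upper bounds on orbit sizes come for free, and the explicit transvections are needed only to prove transitivity on each fibre.

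Three points need fixing.

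(i) The reflections fixing $\chi_1$ are $t_3,t_6,t_{10},t_{12}$, and those fixing $\chi_3$ are $t_1,t_5,t_{11},t_{12}$. This is forced anyway, since $G_4$ has four reflection hyperplanes, so your expectation that only two are singled out is wrong. You must show $t_3,t_6\in\langle t_{10},t_{12}\rangle$. One way is the paper's identities $t_3=t_{12}t_{10}t_{12}^{-1}$ and $t_6=t_{10}t_{12}t_{10}^{-1}$. Another is to note that the four distinguished reflections of $W$ contained in $\langle t_{10},t_{12}\rangle\cong G_4$ all fix $\chi_1$.

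(ii) These identities, the braid relation, and the order and non-membership checks for $\sigma_i$ cannot be done with your $3\times 3$ matrices over $\mathbb F_3$. The element $w$ lies in $W$, acts on $\fa$ by the scalar $\zeta_3$, and acts trivially on $I=T^w$, so $W$ does not act faithfully on $\hat I$. Compute on $\fa$, as the paper does, or inside $W_G$.

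(iii) $\sigma_i$ cannot be a product of $t_k$'s each of which fixes $\chi_i$, since such a product lies in $W^0_{\chi_i}$; only the product as a whole may fix $\chi_i$. The element $w$ actually settles this for $\chi_1$ and $\chi_3$. It fixes every character, and it does not lie in $\langle t_{10},t_{12}\rangle$, which fixes a line of $\fa$ pointwise. So once counting gives index $3$ you may take $\sigma_1=w$, and $W_{\chi_1}\cong\mu_3\times G_4$; the same works for $\chi_3$. For $\chi_2$ this fails: $S_8,S_{10},S_{11}$ are mutually orthogonal, so $w=t_8t_{10}t_{11}\in W^0_{\chi_2}$. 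There you need a non-central $\sigma_2$, such as the paper's $\sigma_2=t_{12}^2t_{10}^2t_{11}t_{12}$, which permutes $t_8,t_{10},t_{11}$ cyclically.
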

Using~\eqref{action-m=3}, one checks that $I_{i}=\langle\nu_i\rangle$
\begin{align*}
&\nu_i=\gamma^{110},\gamma^{222},\gamma^{012},\gamma^{212},\gamma^{210},\gamma^{011},\gamma^{122},\gamma^{101},\gamma^{221},\gamma^{001},\gamma^{100},\gamma^{020}
\end{align*}
and
\begin{align*}
t_i(\gamma_1)&=\gamma^{020},\gamma^{022},\gamma^{121},\gamma^{221},\gamma^{220},\gamma^{122},\gamma^{222},\gamma^{100},\gamma^{021},\gamma^{100},\gamma^{100},\gamma^{120}\\
t_i(\gamma_2)&=\gamma^{120},\gamma^{202},\gamma^{001},\gamma^{222},\gamma^{100},\gamma^{021},\gamma^{010},\gamma^{212},\gamma^{010},\gamma^{011},\gamma^{110},\gamma^{010}\\
t_i(\gamma_3)&=\gamma^{221},\gamma^{220},\gamma^{022},\gamma^{122},\gamma^{121},\gamma^{020},\gamma^{120},\gamma^{001},\gamma^{222},\gamma^{001},\gamma^{001},\gamma^{021}
\end{align*}
where we have written $\gamma^{abc}=\gamma_1^a\gamma_2^b\gamma_3^c$. 
We give more detail in this case. The little Weyl group $W=G_{25}$. We can choose $t_{10}, t_{11},t_{12}$ as generators of $W$, where $t_{10}t_{12}t_{10}=t_{12}t_{10}t_{12}$, $t_{11}t_{12}t_{11}=t_{12}t_{11}t_{12}$ and $t_{11}t_{10}=t_{10}t_{11}$. This can be checked as follows. The Cartan subspace $\fa$ has a basis
$a_1=\zeta_3 h_{\beta_{10}}-\zeta_3^2h_{\theta\beta_{10}}$, $a_2=\zeta_3 h_{\beta_{11}}-\zeta_3^2h_{\theta\beta_{11}}$, $a_3=\zeta_3 h_{\beta_{12}}-\zeta_3^2h_{\theta\beta_{12}}$. With respect to this basis we have
\beqn
t_{10}=\begin{pmatrix}\zeta_3&0&-\zeta_3\\0&1&0\\0&0&1\end{pmatrix}\quad t_{11}=\begin{pmatrix}1&0&0\\0&\zeta_3&-\zeta_3\\0&0&1\end{pmatrix}\quad t_{12}=\begin{pmatrix}1&0&0\\0&1&0\\1&1&\zeta_3\end{pmatrix}\,.
\eeqn
Moreover
\begin{align*}
&t_1=t_{11}t_{12}t_{11}^{-1}&&t_2=t_{10}^{-1}t_{11}t_{12}t_{10}t_{12}^{-1}t_{11}^{-1}t_{10}&&t_3=t_{12}t_{10}t_{12}^{-1}\\
&t_4=t_{12}t_{10}^{-1}t_{12}t_{11}t_{12}^{-1}t_{10}t_{12}^{-1}&
&t_5=t_{12}t_{11}t_{12}^{-1}&&t_6=t_{10}t_{12}t_{10}^{-1}\\
&t_7=t_{10}t_{12}t_{11}t_{12}^{-1}t_{10}^{-1}&&t_8=t_{12}t_{10}^{-1}t_{12}t_{11}t_{12}^{-1}t_{10}t_{12}^{-1}&&t_9=t_{11}t_{12}t_{10}t_{12}^{-1}t_{11}^{-1}\,.
\end{align*}
One now checks that
\begin{align*}
&W_{\chi_1}^0=\langle t_3,t_6,t_{10},t_{12}\rangle=\langle t_{10},t_{12}\rangle\cong G_4,\ \ W_{\chi_3}^0=\langle t_1,t_5,t_{11},t_{12}\rangle=\langle t_{11},t_{12}\rangle\cong G_4,\\& W_{\chi_2}^0=\langle t_8,t_{10},t_{11}\rangle\cong (\mu_3)^3,\ 
W_{\chi_i}=\langle\sigma_i\rangle\ltimes W_{\chi_i}^0,\,i=1,2,3
\end{align*}
where we can take $\sigma_1=t_{11}t_{12}t_{10}^2t_{12}t_{11}t_{12}t_{10}$, $\sigma_3=t_{10}t_{12}t_{11}^2t_{12}t_{10}t_{12}t_{11}$ and $\sigma_2=t_{12}^2t_{10}^2t_{11}t_{12}$. One checks that each $\sigma_i$ has order 3, $\sigma_1t_{12}\sigma_1^{-1}=t_{10}$, $\sigma_3t_{12}\sigma_3^{-1}=t_{11}$, $\sigma_2t_{8}\sigma_2^{-1}=t_{11}$, and  $\sigma_2t_{11}\sigma_2^{-1}=t_{10}$.

We have $\theta|_{(G_{t_i})_\der}\cong(A_2,3_\rs)$, $i=1,\ldots,12.$ Moreover, $\chi|_{I_{t_i}}=1$ iff $t_i\in W_\chi^0$. So $R_{\chi,t_i}=(z-1)^3$ if $t_i\in W_\chi^0$ and $R_{\chi,t_i}=z^3-1$ otherwise.

\subsubsection{$(^2E_6,4_\rs)$}  Let $w\vartheta=-w_h^3$. Then
\beqn
(w\vartheta\alpha_i)_{i=1,\ldots,6}=-101110,011210,111221,-122321,112211,-001111.
\eeqn
We have $S_i=\pm\{\theta^j\beta_i,j=0,\ldots,3\}\cup\pm\{\delta_i,\theta\delta_i\}$, $i=1,\ldots,6$, where $\delta_i=\beta_i+\theta\beta_i$ and 
{\small\beqn
(\beta_i)= 011111 , 101100 , 001000 , 001100 , 101000 ,
 100000 \,.
\eeqn}
\begin{lemma}
We have
\beqn
I=\langle\gamma_1:=\check\alpha_1(\xi_4)\check\alpha_4(\xi_4^2)\check\alpha_6(\xi_4^3),\,\gamma_2:=\check\alpha_2(\xi_4^2)\check\alpha_3(\xi_4)\check\alpha_5(\xi_4^3)\rangle\cong\mu_4^2.
\eeqn 
Define $\chi_i\in\hat I$, $i=1,2$ by
\beqn
\chi_1:(\gamma_1,\gamma_2)\mapsto(-1,-1),\ \chi_2:(\gamma_1,\gamma_2)\mapsto(\zeta_4,1).
 \eeqn
The $W$-action on $\hat I$ has $3$ orbits with representatives $\chi_i$, $i=0,1,2$. Moreover
\bern
&&W_{\chi_1}=W_{\chi_1}^0=\langle t_2,t_5,t_1^2,t_3^2,t_4^2,t_6^2\rangle\cong G_{4,1,2} 
\qquad W_{\chi_2}=W_{\chi_2}^0=\langle t_4\rangle\times\langle t_3^2\rangle\cong\mu_4\times\mu_2.
\eern
\end{lemma}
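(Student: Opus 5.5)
We follow the same pattern as for the preceding lemmas. First we compute $I=T^\theta$ directly. We write down the action of $\theta|_T=w\vartheta=-w_h^3$ on the simple coroots $\check\alpha_i$; this is read off from the displayed images $w\vartheta\alpha_i$, since $E_6$ is simply laced. Then we solve $\theta(t)=t$ for $t=\prod_i\check\alpha_i(x_i)$.

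Because $\Phi(w\vartheta)=\Phi_4^2\Phi_2^2$ and $\theta$ is stable, $T^\theta$ is finite. Its order is $|\det(1-w\vartheta)|$ on $X_*(T)$, which equals $\Phi_4(1)^2\Phi_2(1)^2=4\cdot 4=16$. So it suffices to check that $\gamma_1,\gamma_2$ are $\theta$-fixed and that each has order $4$, with $\langle\gamma_1\rangle\cap\langle\gamma_2\rangle=1$. This gives $I\cong\mu_4^2$.

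Next, for each $i=1,\dots,6$ we compute the generator $\nu_i=\check\beta_i(\zeta_4^3)\theta\check\beta_i(\zeta_4^2)\theta^2\check\beta_i(\zeta_4)$ of $I_i$, using the listed $\beta_i$. We express each $\nu_i$ as $\gamma_1^a\gamma_2^b$. Using \eqref{action=m=4-2e6}, we then compute $t_i(\gamma_1)$ and $t_i(\gamma_2)$; for this we write $\gamma_1,\gamma_2$ as products of the $\nu_j$, or equivalently apply $t_i$ to the coroot expressions. This gives the action of the generators $t_i$ of $W=G_8$ on $\hat I\cong\mu_4^2$.

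The orbit statement then follows by counting.
\begin{itemize}
\item The stabiliser in $W$ of $\chi_1$ contains $t_2,t_5$, because $\chi_1|_{I_{t_2}}=\chi_1|_{I_{t_5}}=1$. It also contains $t_1^2,t_3^2,t_4^2,t_6^2$, because $\chi_1$ is trivial on the squares of the relevant $\nu_i$, i.e.\ $\chi_1|_{I_i}$ has order $2$.
\item We check that these elements generate a reflection subgroup isomorphic to $G_{4,1,2}$, of order $32$. This is done by exhibiting $t_2,t_5$ as the order-$4$ reflections, with the $t_j^2$ conjugate to each other, exactly as in the $(F_4,4_\rs)$ case.
\item Since $|G_8|=96$, the orbit $W\chi_1$ has at most $3$ elements. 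We verify that it has exactly $3$: these are the characters of order $2$ that take the value $-1$ on at least one generator in a prescribed pattern.
\item Similarly, $W_{\chi_2}^0\supset\langle t_4\rangle\times\langle t_3^2\rangle$, of order $8$. Hence $|W\chi_2|\le 12$.
\item The orbit sizes must satisfy $1+|W\chi_1|+|W\chi_2|=16$. We show $|W\chi_1|=3$ and $|W\chi_2|=12$ by explicitly generating each orbit from the $t_i$-action.
\item These equalities force $W_{\chi_1}$ and $W_{\chi_2}$ to have orders $32$ and $8$. Hence $W_\chi=W_\chi^0$, which equals the displayed groups.
\end{itemize}

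The main obstacle is the explicit bookkeeping. We must confirm the commutation $t_4t_3^2=t_3^2t_4$ and the identification with $G_{4,1,2}$; we do this by writing the $t_i$ as $2\times 2$ matrices on $\fa$ in a basis built from $h_{\beta_i}$, as was done in the $(E_6,3_\rs)$ case. We must also confirm that $t_3^2$ is a reflection, i.e.\ that $t_3$ has order $4$, and that no further reflections stabilise $\chi_2$. The latter is settled by the order count above.
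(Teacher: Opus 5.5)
Your proposal is correct and follows essentially the same route as the paper: compute $I=T^\theta$, the generators $\nu_i$ of $I_{t_i}$ and the action of the $t_i$ on $\gamma_1,\gamma_2$ via \eqref{action=m=4-2e6}, then pin down the orbits and stabilisers by orbit--stabiliser counting against $|G_8|=96$. Your use of $|\det(1-w\vartheta)|=\Phi_4(1)^2\Phi_2(1)^2=16$ to certify $|I|$ is a convenient shortcut for the paper's direct fixed-point computation, and it does not change the overall argument.
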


Using~\eqref{action=m=4-2e6} one checks that
{\small\begin{align*}
&I_{i}=\langle\nu_i\rangle\quad\nu_i=\gamma_1,\,\gamma_1\gamma_2^3,\, \gamma_1^2\gamma_2,\,\gamma_2,\,\gamma_1^3\gamma_2^3,\,\gamma_1\gamma_2^2\,\\
&t_i(\gamma_1)=\gamma_1,\,\gamma_1^2\gamma_2^3,\,\gamma_1^3\gamma_2^3,\,\gamma_1\gamma_2^3,\,\gamma_2^3,\,\gamma_1^3\qquad
t_i(\gamma_2)=\gamma_1\gamma_2,\, \gamma_1,\,\gamma_2^3,\, \gamma_2,\,\gamma_1\gamma_2^2,\,\gamma_1\gamma_2^3\,.
\end{align*}}
We have $\theta_{(G_{t_i})_\der}\cong(A_3,4_\rs)$. So $R_{\chi_0,t_i}=(z-1)^4$ for all $i$. Moreover $\chi_1|_{I_{t_i}}=1$ for $i=2,5$ and $\chi_1|_{I_{t_i}}$ is of order 2 for $i=1,3,4,6$. So $R_{\chi_1,t_i}=(z-1)^4$, $i=2,5$ and $R_{\chi_1,t_i}=(z^2-1)^2$, $i=1,3,4,6$. Similarly, $\chi_2|_{I_{t_4}}=1$, $\chi_2|_{I_{t_3}}$ is of order 2, and  $\chi_2|_{I_{t_i}}$ is of order 4, $i=1,2,5,6$.  So $R_{\chi_2,t_4}=(z-1)^4$, $R_{\chi_2,t_3}=(z^2-1)^2$ and $R_{\chi_2,t_i}=z^4-1$, $i=1,2,5,6$.

\subsubsection{$(E_7,6_\rs)$}Let $w=w_h^3$. Then {\small$$(w\alpha_i)_{i=1,\ldots,7}=1011111,-0112111,-1112211, 1224321, \-0001110,-1122100,0011100.$$}
By~\S\ref{sssec-e76s}, we have $S_i=\pm\{\beta_i,\theta\beta_i,\theta^2\beta_i\}\cong A_2$, $i=1,\ldots,12$, and $S_i=\pm\{\beta_i,\theta\beta_i,\theta^2\beta_i\}\cong A_1^3$, $i=13,\ldots,21$, where
{\small\begin{align*}
(\beta_i)=& 0112100 , 1112100 , 0000111  , 0000010, 0011000 , 0111110 , 0001000 ,\\& 0011000 , 1010000 , 1000000 , 1011000 , 0010000 , 1122210 , 1112210 ,\\& 0112210 , 0000110 , 0111100 , 1111000 , 1011100 , 0111000 , 0001110 .
\end{align*}}
One checks that $I_{t_i}=1$, $i=1,\ldots, 12$, and $I_{t_i}=Z_{G}\cong\mu_2$, $i=13,\ldots,21$.

\begin{lemma}
We have $$I=\langle\gamma:=\check\alpha_2(-1)\check\alpha_5(-1)\check\alpha_7(-1)\rangle=Z_G\cong\mu_2.$$
Define $\chi_1\in\hat I$ by $\chi_1(\gamma)=-1$. The action of $W$ on $\hat I$ has two orbits with representatives $\chi_0,\chi_1$. Moreover,
$W_{\chi_1}=W_{\chi_1}^0=W\cong G_{26}$.
\end{lemma}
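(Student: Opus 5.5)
Our plan is to compute $I=T^\theta$ directly, then use the general facts about $W$ acting on a central subgroup. Here $\theta|_T=w=w_h^3$ and $T=\{\prod_i\check\alpha_i(x_i)\}$. On $X_*(T)$ the map $w$ has characteristic polynomial $\Phi_6^3\Phi_2$ (Table~\ref{tab-rankgt2}), so $1$ is not an eigenvalue. Hence $T^w$ is finite, of order $|\det(1-w)|$ on the coroot lattice. Evaluating gives $\Phi_6(1)^3\Phi_2(1)=1\cdot 2=2$. So $|I|=2$.

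Next we check that $I$ contains $Z_G$. The centre of $G$ lies in every maximal torus and is fixed by any automorphism of the form $\on{Int}(n_w)$ (here $\vartheta=1$). So $Z_G=\langle\check\alpha_2(-1)\check\alpha_5(-1)\check\alpha_7(-1)\rangle\cong\mu_2$ is contained in $T^\theta=I$. Comparing orders gives $I=Z_G=\langle\gamma\rangle$. As a sanity check, one can verify directly from the formulas for $w\alpha_i$ (transposed to coroots, since $E_7$ is simply laced) that $w$ fixes $\gamma$. This is a routine mod-$2$ computation. It is also consistent with $I_{t_i}=Z_G$ for $i=13,\dots,21$, which was noted just before the statement.

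For the $W$-action, note that $W=N_{G_0}(\fa)/I$ acts on $I$ by conjugation, and $I=Z_G$ is central. Hence the action on $I$, and therefore on $\hat I$, is trivial. It follows that $\hat I=\{\chi_0,\chi_1\}$ consists of two orbits, and $W_{\chi_1}=W$. To see that $W_{\chi_1}^0=W$, observe that $W_{\chi_1}^0$ is generated by all reflections of $W_{\chi_1}=W$. Since $W\cong G_{26}$ is a complex reflection group generated by its reflections (Table~\ref{tab-rt}), this gives $W^0_{\chi_1}=W$.

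The main obstacle is only the determinant computation $|T^w|=2$. If one prefers to avoid characteristic polynomials, the fallback is to solve $w(t)=t$ explicitly on $T$ using the $7\times7$ matrix of $w$ on coroots and reduce it to Smith normal form. This is tedious but straightforward.
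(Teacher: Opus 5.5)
Your proof is correct, but it reaches $I=Z_G$ by a different route from the paper. The paper gives no separate proof for this case. Following the model of the $({}^3D_4,3_\rs)$ lemma, it determines $I=T^\theta$ by directly solving $w(t)=t$, using the explicit action of $w=w_h^3$ on the simple coroots read off from the listed $w\alpha_i$. This produces the generator $\gamma$ at once. The remaining assertions then follow exactly as in your last paragraph. A central $I$ forces the $W$-action on $\hat I$ to be trivial, as the paper states explicitly for $(E_6,6_\rs)$. And $W_{\chi_1}^0=W$ because $W\cong G_{26}$ is generated by its reflections.

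You instead count: $|T^w|=|X^*(T)/(1-w)X^*(T)|=|\det(1-w)|=|\Phi_6(1)^3\Phi_2(1)|=2$. You then observe that $Z_G\subset T^\theta$ already has order $2$. This is basis-free and avoids any $7\times 7$ computation. The same count gives $I=Z_G$ uniformly wherever the paper finds a central $I$; for example, $\Phi_6(1)^2\Phi_3(1)=3$ for $(E_6,6_\rs)$.

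Its limitation is that it only yields $|I|$. It identifies $I$ here only because a subgroup of exactly that order is known in advance to lie in $T^\theta$. In a case like $(E_6,3_\rs)$, $|\det(1-w)|=27$ exceeds $|Z_G|$. There you would still need the paper's explicit computation to find generators of $I$ and the action of the $t_i$ on them, which is what the reduction to rank one actually uses.
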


We have $\theta|_{(G_{t_i})_\der}\cong(^2A_2,6_\rs)$, $i=1,\ldots,12$ and $\theta|_{(G_{t_i})_\der}\cong(A_1,2_\rs)$, $i=13,\ldots,21$. Moreover, $\chi_1|_{I_{t_i}}=1$, $i=1,\ldots, 12$, and $\chi_1|_{I_{t_i}}$ is of order $2$, $i=13,\ldots, 31$. So
$
R_{\chi_0,t_i}=R_{\chi_1,t_i}=(z-1)^2(z+1),\,i=1,\ldots,12,$ and $R_{\chi_0,t_i}=(z-1)^2, R_{\chi_1,t_i}=z^2-1, i=13,\ldots,21.
$

Note that $\langle t_i\mid\chi_1|_{I_{t_i}}=1\rangle=\langle t_i,\,i=1,\ldots,12\rangle\cong G_{25}\cong W_{\chi}^{\en}$.

\subsubsection{$(E_8,8_\rs)$}\label{ssec-e8-8}
 
Let $w=s_{e_5-e_6}s_{e_1-e_4}s_{e_2-e_3}s_{e_7-e_8}s_{e_4-e_5}s_{e_1-e_2}s_{e_1+e_2}s_{e_3-e_7}.$
Then
{\tiny\bern
(w\alpha_i)_{i=1,\ldots,8}=-12233210,-01121000,00001000,23454321,-22343211, -00111110, 00111100,-00011100.
\eern}
We have $S_i=\pm\{\theta^j\beta_i,j=0,\ldots,3\}\cong A_1^4$, $i=1,\ldots,12$, and $S_i=\pm\{\theta^j\beta_i,j=0,\ldots,3\}\cup\pm\{\theta^j\delta_i,j=0,\ldots,3\}\cup\{\theta^j\eta_i,j=0,\ldots,3\}\cong D_4$ with $\delta_i=\beta_i+\theta\beta_i-\theta^3\beta_i$, $\eta_i=\beta_i+\theta\beta_i$, $i=13,\ldots,18$, where
{\tiny\begin{align*}
(\beta_i)=& {10000000} , {00000001} , {10100000} , {00110000}, {00011000} , {00001110} , {11110000},{10111000},  {11121000} , \\
&  {11121110}  , {11232100} , {11222211} , {00000010} , {01110000} ,
 {11111000} , {10111100}, {10111110} ,
 {11221100} .
\end{align*}}

\begin{lemma}
We have
$$I=\langle\gamma_1:=\check\alpha_2(-1)\check\alpha_3(-1),\,\gamma_2:=\check\alpha_5(-1)\check\alpha_7(-1) \check\alpha_8(-1)\rangle\cong\mu_2^2\,.$$
The action of $W_\fa\cong G_9$ on $\hat I$ has $2$ orbits with representatives $\chi_0,\chi_1:(\gamma_1,\gamma_2)\mapsto(-1,1)$ and
\bern
 W_{\chi_1}=W_{\chi_1}^0=\langle t_1,t_3,t_9,t_{12},t_{13}^2,t_{14}^2,t_{15}^2,t_{16}^2,t_{17},t_{18}\rangle\cong G_{8,2,2}.
\eern
\end{lemma}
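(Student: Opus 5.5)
We follow the same pattern as the preceding lemmas. First compute $I=T^\theta$: from the explicit list $(w\alpha_i)$ we read off the action of $w$ on the coroot lattice, $\check\alpha_i\mapsto w\check\alpha_i$ (simply laced, so $w\check\alpha_i$ is the coroot of $w\alpha_i$). An element $\prod_i\check\alpha_i(x_i)$ is $w$-fixed iff the resulting multiplicative system in the $x_i$ holds. Since $w$ is elliptic regular with characteristic polynomial $\Phi_8^2$, $|T^w|=|\det(1-w)|=\Phi_8(1)^2=4$. So it suffices to check that $\gamma_1,\gamma_2$ are $w$-fixed and generate a group of order $4$. Concretely, we verify that $w\check\alpha_2+w\check\alpha_3-\check\alpha_2-\check\alpha_3$ and the analogous combination for $\gamma_2$ lie in $2X_*(T)$, and that $\gamma_1,\gamma_2,\gamma_1\gamma_2$ are nontrivial in $T$.

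Next compute the subgroups $I_i=I_{t_i}$ and the action of the generators $t_i$ on $I$ using \eqref{action-m=8-E8}. For each $i$ we have $\nu_i=\prod_{a=0}^3\theta^a\check\beta_i(-1)$. We expand $\sum_a\theta^a\check\beta_i$ in simple coroots modulo $2$ and express $\nu_i$ as one of $1,\gamma_1,\gamma_2,\gamma_1\gamma_2$; it must be nontrivial since $I_{t_i}\cong\mu_2$. Then $t_i(\gamma_j)=\gamma_j\nu_i^{\langle c_j,\beta_i\rangle}$, where $\gamma_j$ is written as $\nu$ of suitable $\beta$'s, or directly via $t_i$ acting on $\check\alpha$'s modulo $2$. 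This gives a table of $\nu_i$ and $t_i(\gamma_1),t_i(\gamma_2)$ for $i=1,\dots,18$.

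The orbit statement then follows. $\chi_0$ is fixed, and the three nontrivial characters are permuted transitively, which we see from the table by exhibiting some $t_i$ moving $\chi_1$ to each of the other two. Hence $|W_{\chi_1}|=|G_9|/3=64$. For $W_{\chi_1}^0$ we collect the reflections in $W$ fixing $\chi_1$. These are the order-$2$ reflections $t_i$ ($i\le12$) with $t_i\chi_1=\chi_1$, and the order-$4$ $t_i$ ($i\ge13$) or their squares $t_i^2$ according to whether $t_i$ itself fixes $\chi_1$. Since $W$ acts on $\hat I\setminus\{\chi_0\}$ through $S_3$, each $t_i$ acts as an element of order $\le2$, so $t_i^2$ always fixes $\chi_1$. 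Combining this with the table, we expect the listed set $t_1,t_3,t_9,t_{12},t_{13}^2,\dots,t_{16}^2,t_{17},t_{18}$.

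The main obstacle is identifying $\langle\cdot\rangle$ as $G_{8,2,2}$ of order $64$, which forces $W_{\chi_1}=W_{\chi_1}^0$. For this we would take a basis of $\fa$ as in the $(E_6,3_\rs)$ case, e.g. eigenvectors built from $h_{\theta^a\beta}$ for two suitable $D_4$-type classes, and write the listed reflections as $2\times2$ matrices. We then check that the group they generate is monomial, with diagonal entries in $\mu_8$ whose product lies in $\mu_4$: the order-$4$ reflections $t_{17},t_{18}$ give $\mathrm{diag}(\zeta_8^{\pm2},1)$-type elements, and the order-$2$ ones give permutation-type elements. Counting the $2+2\cdot 4$ expected reflections of order $2$ and $4$ in $G_{8,2,2}$ against our list confirms the identification. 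Alternatively, one can argue abstractly: the reflection subgroup has order dividing $64$, and it contains order-$4$ reflections and enough non-commuting order-$2$ reflections to force order $64$ by the Shephard–Todd classification.
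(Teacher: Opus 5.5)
Your proposal is correct and follows the paper's route: compute $I=T^\theta$, use \eqref{action-m=8-E8} to get the $\nu_i$ and the action of each $t_i$ on $I$, read off the orbits on $\hat I$ and the reflections fixing $\chi_1$, and compare $|W_{\chi_1}|=192/3=64$ with the order of the generated reflection group. Your $|\det(1-w)|=\Phi_8(1)^2=4$ check and your hyperplane count (eight of order $2$, two of order $4$) for pinning down $G_{8,2,2}$ are useful details that the paper leaves to the reader; in your abstract alternative it is this precise count, not merely the presence of order-$4$ and non-commuting order-$2$ reflections (which $G_{4,1,2}$ of order $32$ also has), that forces order $64$.
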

Using~\eqref{action-m=8-E8}, one checks that
{\small\bern
&&\nu_i=\gamma_1, i=2,4,5,6,13,14,\, \nu_i=\gamma_2, i=1,3,9,12,17,18,\, \nu_i=\gamma_1\gamma_2, i=7,8,10,11,15,16\\
&&t_{i}(\gamma_j)=\gamma_j \text{ if $I_{i}=\langle\gamma_j\rangle$},\ t_{i}(\gamma_j)=\gamma_j \gamma\text{ if $I_{i}=\langle\gamma\rangle$ and $\gamma\neq \gamma_j$.}
\eern}
We have $\theta|_{(G_{t_i})_{\on{der}}}\cong(A_1,2_\rs)$, for $i=1,\ldots,12$ and $\theta|_{(G_{t_i})_{\on{der}}}\cong(^2D_4,8_\rs)$ for $i=13,\ldots,18$. 
Moreover, $\chi_1|_{I_{t_i}}=1$, $i=1,3,9,12,17,18$ and $\chi_1|_{I_{t_i}}$ nontrivial otherwise. So
$R_{\chi_0,t_i}=(z-1)^2,\,i=1,\ldots,12,\,R_{\chi_0,t_i}=(z-1)^4,\,i=13,\ldots,18$, $
R_{\chi_1,t_i}=(z-1)^2,\,i=1,3,9,12,\, R_{\chi_1,t_i}=z^2-1,i=2,4,5,6,7,8,10,11$, 
$R_{\chi_1,t_i}=(z^2-1)^2,\,i=13,14,15,16,\ R_{\chi_1,t_i}=(z-1)^4,\,i=17,18\,.
$

\subsubsection{$(E_8,5_\rs)$}Let $w=w_h^{6}$.
Then 
{\tiny$$
(w\alpha_i)_{i=1,\ldots,8}=11221111,\,-12232210,\,-22343211,\,23465421,\,-12354321,\, 12243221,\,-11122221,\,10111110\,.
$$}
 We have $S_i=\pm\{\theta^j\beta_i,j=0,\ldots,4\}\cup\pm\{\theta^j\delta_i,j=0,\ldots,4\}, \delta_i=\beta_i+\theta\beta_i$, $i=1,\ldots,12$ and
 {\small \begin{align*}
(\beta_i)=& {01110000} ,
 {01000000} ,  {11222211} ,
 {00000100} ,  {11121110} ,
 {00111000} , \\&{00011111} ,
 {10111000} , {12232111} ,
 {01122221} , {12232100} ,
 {00001100} \,.
 \end{align*}}
  
 \begin{lemma}
 We have $$I=\langle\gamma_1:=\check\alpha_1(\zeta_5^3)\check\alpha_4(\zeta_5)\check\alpha_6(\zeta_5^2)\check\alpha_8(\zeta_5),\gamma_2:=\check\alpha_2(\zeta_5^2)\check\alpha_3(\zeta_5^3)\check\alpha_5(\zeta_5)\check\alpha_7(\zeta_5)\rangle\cong\mu_5^2.$$
 The action of $W$ on $\hat I$ has $2$ orbits with representatives $\chi_0,\chi_1:(\gamma_1,\gamma_2)\mapsto (\zeta_5,1)$ and
 \beqn
W_{\chi_1}=W_{\chi_1}^0=\langle t_7\rangle\times \langle t_{12}\rangle\cong \mu_5\times\mu_5.
 \eeqn
 \end{lemma}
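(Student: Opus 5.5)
We follow the same pattern as the preceding lemmas for the other gradings. First we determine $I=T^\theta$ with $\theta|_T=w=w_h^6$. From the listed images $(w\alpha_i)$ we write down the matrix of $w$ on the coroot lattice $X_*(T)=\bigoplus\bZ\check\alpha_i$. A fixed element $\prod\check\alpha_i(x_i)$ is determined by the condition $(w-1)\lambda\in 5X_*(T)$ for $\lambda=\sum k_i\check\alpha_i$ with $x_i=\zeta_5^{k_i}$.

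Since $w$ is elliptic with characteristic polynomial $\Phi_5^2$, the rank of $w-1$ is $8$, so $T^w$ is finite. Its order is $|\det(1-w)|=\Phi_5(1)^2=25$. We then check directly that $\gamma_1$ and $\gamma_2$ are fixed, which amounts to verifying the congruence for the two displayed coefficient vectors. We also check that they are independent, i.e.\ neither is a power of the other. This gives $I=\langle\gamma_1,\gamma_2\rangle\cong\mu_5^2$.

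Next we compute the local data. For each $i=1,\dots,12$ we have $S_i$ as displayed, $t_i=s_{\beta_i}s_{\theta\beta_i}s_{\theta^2\beta_i}s_{\theta^3\beta_i}$ and $\nu_i=\prod_{a=0}^3\theta^a\check\beta_i(\zeta_5^{4-a})$. We express each $\nu_i$ as $\gamma_1^{a}\gamma_2^{b}$ by applying $w$ to $\check\beta_i$, which is a lattice computation. Using~\eqref{action-m=5}, we then compute $t_i(\gamma_1)$ and $t_i(\gamma_2)$, writing $\gamma_j$ itself as a product of the $\nu_k$, or directly via the pairings $\langle\cdot,\beta_i\rangle$.

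Because $I\cong\mu_5^2$ and each $I_i$ is a line in $\bF_5^2$, there are $6$ lines and $12$ reflections. We expect each line to occur as $I_i$ for exactly two indices $i$, giving commuting pairs of order-$5$ reflections. We expect $I_7=I_{12}=\ker\chi_1$, where $\ker\chi_1=\langle\gamma_2\rangle$.

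Then we compute the orbit structure on $\hat I$. Since $\chi_1|_{I_{t_i}}=1$ exactly for $i=7,12$ (to be verified from the $\nu_i$), we get $W_{\chi_1}^0=\langle t_7,t_{12}\rangle$. We check that $t_7$ and $t_{12}$ commute; this should follow from $S_7\perp S_{12}$, i.e.\ all pairings between the two $A_4$'s vanish. Hence $W_{\chi_1}^0\cong\mu_5\times\mu_5$.

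For the orbits, $W=G_{16}$ has order $600$ and $\hat I\setminus\{\chi_0\}$ has $24$ elements. We show that the orbit of $\chi_1$ under the generators $t_i$ is all $24$ nontrivial characters. Using the explicit action, the $t_i$ act on $\bF_5^2$ through transvection-like matrices, which generate a group acting transitively on nonzero vectors. Then $|W_{\chi_1}|=600/24=25=|W_{\chi_1}^0|$, so $W_{\chi_1}=W_{\chi_1}^0$.

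The main obstacle is the bookkeeping: computing the twelve $\nu_i$ and the actions $t_i(\gamma_j)$ correctly from $w$ on coroots. The conceptual steps are immediate once this table is in hand.
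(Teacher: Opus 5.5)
Your proposal is correct and follows essentially the paper's route: compute $I=T^\theta$, use \eqref{action-m=5} to get the lines $I_i$ and the $t_i$-action on $I$, and read off $W_{\chi_1}^0=\langle t_7,t_{12}\rangle$ from $I_7=I_{12}=\langle\gamma_2\rangle=\ker\chi_1$ (the $I_i$ do pair up as you predict). Then conclude $W_{\chi_1}=W_{\chi_1}^0$ from the orbit count $600/24=25$; your transvection/$SL_2(\mathbb{F}_5)$ transitivity argument and the orthogonality $S_7\perp S_{12}$ (which does hold) only spell out more conceptually what the paper leaves as ``one checks''.
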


 Using~\eqref{action-m=5}, one checks that
 {\small\begin{align*}
 &I_{1}=I_{4}=\langle\gamma_1^{3}\gamma_2\rangle,\, I_{2}=I_{3}=\langle\gamma_1^{4}\gamma_2\rangle,\,I_{5}=I_{8}=\langle\gamma_1\gamma_2\rangle,\\&\,I_{6}=I_{{11}}=\langle\gamma_1\rangle,\,
 I_{7}=I_{{12}}=\langle\gamma_2\rangle,\,I_{9}=I_{{10}}=\langle\gamma_1^{2}\gamma_2\rangle\,;\\
& t_i(\gamma_1)=\gamma^{34},\gamma^{01},\gamma^{24},\gamma^{41},\gamma^{04},\gamma^{10},\gamma^{14},\gamma^{21},\gamma^{31},\gamma^{44},\gamma^{10},\gamma^{12}\\
 &t_i(\gamma_2)=\gamma^{44},\gamma^{42},\gamma^{10},\gamma^{13},\gamma^{12},\gamma^{11},\gamma^{01},\gamma^{40},\gamma^{14},\gamma^{43},\gamma^{41},\gamma^{01}
\end{align*}}
where $\gamma^{ab}:=\gamma_1^a\gamma_2^b$. 
  We have $\theta|_{(G_{t_i})_{\on{der}}}\cong(A_4,5_\rs)$ and $\chi_1|_{I_{t_i}}=1$, $i=7,12$. So $R_{\chi,t_i}=(z-1)^5$ if $t_i\in W_\chi$, and $R_{\chi,t_i}=z^5-1$ otherwise.
 
\subsection{$(E_8,4_\rs)$}
Let $w=(s_{e_5-e_6}s_{e_1-e_4}s_{e_2-e_3}s_{e_7-e_8}s_{e_4-e_5}s_{e_1-e_2}s_{e_1+e_2}s_{e_3-e_7})^2.$
Then
{\tiny\bern
(w\alpha_i)_{i=1,\ldots, 8}=11221000,-23465431,-223465432,23465432,-00000011, -01122100, 01011000,-01010000\,.
\eern}
We have $S_i=\pm\{\beta_i,\theta\beta_i\}$, $i=1,\ldots,60$, where
{\tiny \begin{align*}
(\beta_i)=& {10000000} ,\, {01000000} , {00100000} , {00010000}, {00001000} , {00000100} , {00000010} , {00000001} , {10100000} , {{00110000}} ,\\& {{00011000}} , {{00001100}}, {{00000110}} , {{10110000}} , {{01110000}} , {{00111000}},
 {{00011100}} , {{00001110}} , {{00000111}} , {{11110000}} \\& {{10111000}} , {{01111000}} , {{01011100}} , {{00011110}}, {{00001111}} , {{11111000}} , {10111100} , {01111100} , {01011110} , {00011111} ,\\& {01121000} , {11111100}, {10111110} , {01011111} , {11121000} , {01111110} , {11111110} , {10111111} , {01111111} , {11121100} ,\\& {11111111} , {11221100}, {11121110} , {11122110}, {11222100} , {{11121111}} , {{11232100}} , {{11232110}} , {{11222210}} , {{11222211}} , \\&{{11232210}} , {{11232111}}, {{11222221}} , {{12232210}} , {{11232221}} , {{12232211}} , {{12233210}} , {{12233211}} , {{12233321}} , {{22354321}} \,.
\end{align*}}
 Let
{\tiny\bern
&&J_1:=\{6,25,29,60\},\, J_2:=\{33,42,46,59\},\  J_3:=\{20,21,53,58\},\,J_4:=\{2,3,5,31\},\  J_5:=\{1,50,54,55\},\\&&  J_6:=\{27,41,44,52\},\,J_7:=\{12,19,24,36\},\,  J_8:=\{13,23,30,39\},\,J_9:=\{37,40,45,48\},\,J_{10}:=\{14,26,49,56\},\\
&&J_{11}:=\{4,7,15,16\},\, J_{12}:=\{9,35,51,57\},\, J_{13}:=\{32,38,43,47\},\,\, J_{14}:=\{17,18,28,34\}, J_{15}:=\{8,10,11,12\}.
\eern}
\begin{lemma}
We have
\begin{align*}
I=&\langle\gamma_1:=\check\alpha_2(-1)\check\alpha_3(-1),\,\gamma_2:=\check\alpha_3(-1)\check\alpha_5(-1),\\&\gamma_3:=\check\alpha_4(-1)\check\alpha_7(-1),\,\gamma_4:=\check\alpha_5(-1)\check\alpha_7(-1) \check\alpha_8(-1)\rangle\cong\mu_2^4\,.
\end{align*}
The action of  $W$ on $\hat I$ has $2$ orbits with representatives $\chi_0,\chi_1:(\gamma_1,\gamma_2,\gamma_3,\gamma_4)\mapsto(-1,1,1,1)$ and 
\bern
&&W_{\chi_1}=W_{\chi_1}^0=\langle t_i,i\in \cup_{a\in{2,3,4,8,9,10,14}}J_{a}\rangle\cong G_{4,2,4}.
\eern
\end{lemma}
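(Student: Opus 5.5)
We follow the same template as the preceding lemmas, e.g. the $(E_8,8_\rs)$ case in \S\ref{ssec-e8-8}, since $\theta$ here is the square of the $8_\rs$ automorphism used there.

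\textbf{Step 1: compute $I=T^\theta$.} Write $\theta|_T=w$ in terms of its action on the simple coroots, read off from the displayed $(w\alpha_i)$. Then solve $w(x)=x$ for $x=\prod_i\check\alpha_i(x_i)$. Since $M(w)=\Phi_4$, we have $w^2=-1$ on $X_*(T)$, so $T^w$ is contained in $T[2]=\langle\check\alpha_i(-1)\rangle$. Its order is $|\det(1-w)|=\Phi_4(1)^4=2^4$. It therefore suffices to check that $\gamma_1,\dots,\gamma_4$ are $w$-fixed and independent modulo $2X_*(T)$. Both checks are linear algebra over $\mathbb F_2$: the fixed points are the kernel of $1-w$ acting on $X_*(T)/2X_*(T)$.

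\textbf{Step 2: compute the $I_i$ and the $W$-action.} For each of the $60$ classes $S_i=\pm\{\beta_i,\theta\beta_i\}$ (type $A_1^2$, with $\theta|_{(G_{t_i})_\der}\cong(A_1,2_\rs)$), set
\[
\nu_i=\check\beta_i(-1)\,\theta\check\beta_i(-1).
\]
Express $\nu_i$ in terms of the $\gamma_j$ by reducing $\check\beta_i+\theta\check\beta_i$ modulo $2$ in the coroot lattice. Then compute $t_i(\gamma_j)=\gamma_j\nu_i^{\langle\cdot,\beta_i\rangle}$ using \eqref{action-m=4-E8}.

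Dually, $\chi$ is a vector in $\mathbb F_2^4$. The reflection $t_i$ acts on $\hat I$ by the transvection $\chi\mapsto\chi+\chi(\nu_i)\lambda_i$, where $\lambda_i$ is the functional $\gamma\mapsto\langle\gamma,\beta_i\rangle \bmod 2$. The sets $J_a$ group the $60$ indices according to the value of $\nu_i\in I\setminus\{1\}$. Since there are $15$ nontrivial elements and $4$ indices per class, each nontrivial element of $I$ should occur as $\nu_i$ for exactly four $i$. I would verify this first, as it organises everything that follows.

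\textbf{Step 3: orbits and stabilisers.} We have $\chi_1(\nu_i)=1$ exactly when $\nu_i$ lies in the index-$2$ subgroup $\ker\chi_1=\langle\gamma_2,\gamma_3,\gamma_4\rangle$. These are $7$ elements, matching the $7$ listed $J_a$, so $W^0_{\chi_1}=\langle t_i: i\in\cup J_a\rangle$ has $28$ reflections, as $G_{4,2,4}$ does.

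To see that $\hat I\setminus\{\chi_0\}$ is a single orbit, apply the transvections starting from $\chi_1$ and check that all $15$ nontrivial characters are reached. Alternatively, use that $W\cong G_{31}$ acts on $\hat I\cong\mathbb F_2^4$ through a group containing enough transvections to be transitive on nonzero vectors. Orbit–stabiliser then gives
\[
|W_{\chi_1}|=|G_{31}|/15=46080/15=3072=|G_{4,2,4}|.
\]
So once $W^0_{\chi_1}\cong G_{4,2,4}$ is established, we get $W_{\chi_1}=W^0_{\chi_1}$.

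\textbf{Main obstacle.} The hardest step is identifying $W^0_{\chi_1}$ with $G_{4,2,4}$, and not merely knowing it is some reflection subgroup with $28$ order-$2$ reflections. I would realise the $\fa$-action by matrices in a basis $a_k=h_{\beta}\pm\mathbf i\,h_{\theta\beta}$ for four suitable classes. I would then exhibit that the $28$ reflection hyperplanes are exactly the $G_{4,2,4}$ arrangement $\{x_j=\pm x_k,\ x_j=\pm\mathbf i x_k\}$ (that is $4\cdot 6=24$ hyperplanes) together with $\{x_j=0\}$. This gives $24+4=28$, consistent with $G_{4,2,4}$, whose reflections have order $2$.

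If this coordinate check proves awkward, a fallback is the endoscopic description: $\check G(\chi_1)^0\cong D_8$, whose order-$4$ stable little Weyl group is $G_{4,2,4}$. Comparing orders and reflection counts then forces the isomorphism.
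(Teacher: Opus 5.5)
Your proposal is correct and follows essentially the paper's route: compute $I=T^w$, express the $\nu_i$ generating $I_{t_i}$ (grouped into the $J_a$), get the action of the $t_i$ from \eqref{action-m=4-E8}, read off the two orbits and $W_{\chi_1}^0$, and conclude $W_{\chi_1}=W_{\chi_1}^0$ from $|G_{31}|/15=3072=|G_{4,2,4}|$. The paper simply displays this data (the $I_i$ and $\Gamma_i$) and asserts the identification with $G_{4,2,4}$; the one check you leave implicit is that each functional $\lambda_i$ is nonzero on $I$ (visible from the paper's list of $\Gamma_i$), which is needed for the equivalence $t_i\in W_{\chi_1}\Leftrightarrow \chi_1(\nu_i)=1$.
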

Let $\Gamma_i=(t_i(\gamma_1),t_i(\gamma_2),t_i(\gamma_3),t_i(\gamma_4))$. Using~\eqref{action-m=4-E8} one checks that
{\tiny\bern
&&I_i=\langle\gamma_1\rangle,\,i\in J_1,\,\,I_i=\langle\gamma_2\rangle,\,i\in J_2,\ I_i=\langle\gamma_3\rangle,\,i\in J_3,\,I_i=\langle\gamma_4\rangle,\,i\in J_4,\ I_i=\langle\gamma_1\gamma_2\rangle,\,i\in J_5,\,\,I_i=\langle\gamma_1\gamma_3\rangle,\,i\in J_6\\&&I_i=\langle\gamma_1\gamma_4\rangle,\,i\in J_7,\,\,I_i=\langle\gamma_2\gamma_3\rangle,\,i\in J_8,\,I_i=\langle\gamma_2\gamma_4\rangle,\,i\in J_9,\,I_i=\langle\gamma_3\gamma_4\rangle,\,i\in J_{10},\,I_i=\langle\gamma_1\gamma_2\gamma_3\rangle,\,i\in J_{11},\\&&I_i=\langle\gamma_1\gamma_2\gamma_4\rangle,\,i\in J_{12},\,I_i=\langle\gamma_1\gamma_3\gamma_4\rangle,\,i\in J_{13},\,\,I_i=\langle\gamma_2\gamma_3\gamma_4\rangle,\,i\in J_{14},\,I_i=\langle\gamma_1\gamma_2\gamma_3\gamma_4\rangle,\,i\in J_{15},
\eern}
and
{\tiny\begin{align*}
&\Gamma_i=(\gamma_1,\gamma_1\gamma_2,\gamma_1\gamma_3,\gamma_4),\,i\in J_1&&\Gamma_i=(\gamma_1\gamma_2,\gamma_2,\gamma_2\gamma_3,\gamma_4),\,i\in J_2&&
\Gamma_i=(\gamma_1\gamma_3,\gamma_2\gamma_3,\gamma_3,\gamma_3\gamma_4),\,i\in J_3,\\&\Gamma_i=(\gamma_1,\gamma_2,\gamma_3\gamma_4,\gamma_3),\,i\in J_4&&
\Gamma_i=(\gamma_2,\gamma_1,\gamma_3,\gamma_4),\,i\in J_5&&
\Gamma_i=(\gamma_3,\gamma_2,\gamma_1,\gamma_1\gamma_3\gamma_4),\,i\in J_6\\
&\Gamma_i=(\gamma_1,\gamma_1\gamma_2\gamma_4,\gamma_3,\gamma_4),\,i\in J_7&&\Gamma_i=(\gamma_1,\gamma_3,\gamma_2,\gamma_2\gamma_3\gamma_4),\,i\in J_8
&&\Gamma_i=(\gamma_1\gamma_2\gamma_4,\gamma_2,\gamma_3,\gamma_4),\,i\in J_9,\\ &\Gamma_i=(\gamma_1\gamma_3\gamma_4,\gamma_2\gamma_3\gamma_4,\gamma_4,\gamma_3),\,i\in J_{10}
&&\Gamma_i=(\gamma_1,\gamma_2,\gamma_3,\gamma_1\gamma_2\gamma_3\gamma_4),\,i\in J_{11}&&\Gamma_i=(\gamma_2\gamma_4,\gamma_1\gamma_4,\gamma_1\gamma_2\gamma_3\gamma_4,\gamma_4),\,i\in J_{12}\\
&\Gamma_i=(\gamma_3\gamma_4,\gamma_2,\gamma_3,\gamma_1\gamma_3),\,i\in J_{13}&& 
\Gamma_i=(\gamma_1,\gamma_3\gamma_4,\gamma_3,\gamma_2\gamma_3),\,i\in J_{14}
&&\Gamma_i=(\gamma_1,\gamma_2,\gamma_1\gamma_2\gamma_4,\gamma_1\gamma_2\gamma_3),\,i\in J_{15}\,.
\end{align*}}
We have $\theta|_{(G_{t_i})_{\on{der}}}\cong(A_1,2_\rs)$ and $\chi_1|_{I_{t_i}}=1$ if and only if $t_i\in W_{\chi_1}^0$. So
$
R_{\chi,t_i}=(z-1)^2\text{ if }t_i\in W_{\chi_1}^0$, and  $R_{\chi,t_i}=z^2-1\text{ otherwise}.
$

\subsubsection{$(E_8,3_\rs)$}Let $w=w_h^{10}$. Then 
{\tiny\beqn
(w\alpha_i)_{i=1,\ldots,8}=01122100,-1223211,-12343210,23454321,-22344321,11233221,-01122221,01011110\,.
\eeqn}
We have $S_i=\{\theta^j\beta_i,j=0,1,2\}$, $i=1,\ldots,40$, where
{\tiny\begin{align*}
(\beta_i)=& {10000000} ,  {01000000} ,  {00100000} , {00010000} , {00001000} , {00000100} , {00000010} ,  {00000001},
{10100000} , {01010000} ,\\&  {00110000} ,  {00011000} , {00001100} ,  {00000110} ,  {00000011} , {10110000} , {01011000} ,  {00111000} ,  {00011100} , {00001110} \\
& {00000111} , {01111000} ,  {01011100} , {00011110} ,  {00001111} ,  {11111000} , {10111100} ,  {01111100} 
, {00011111} ,  {01111110} ,\\&  {01121100} , {11111110} , {01111111} ,  {11121100} ,  {01122110} ,  {11121110} , {11121111} ,  {11222110} ,  {11222111} , {11232211} \,.\end{align*}
}

\begin{lemma}\label{lem-3se8}
We have 
\bern
I
&=&\langle\gamma_1=\check\alpha_1(\zeta_3)\check\alpha_6(\zeta_3)\check\alpha_8(\zeta_3),\gamma_2=\check\alpha_2(\zeta_3)\check\alpha_5(\zeta_3)\check\alpha_7(\zeta_3),\\&&\qquad\gamma_3=\check\alpha_3(\zeta_3)\check\alpha_5(\zeta_3)\check\alpha_7(\zeta_3^2),\,\gamma_4=\check\alpha_4(\zeta_3)\check\alpha_6(\zeta_3)\check\alpha_8(\zeta_3^2)\rangle\cong\mu_3^4.
\eern
The action of $W$  on $\hat I$ has $2$ orbits with representatives $\chi_0,\chi_1:(\gamma_1,\ldots,\gamma_4)\mapsto(\zeta_3,1,1,1)$ and
\beqn
W_{\chi_1}=W_{\chi_1}^0=\langle t_i,i=7,8,15,16,23,26,34,36,37,38,39,40\rangle\times\langle t_{12}\rangle\cong G_{25}\times\mu_3.
\eeqn
\end{lemma}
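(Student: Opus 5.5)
I would follow the same template as the preceding lemmas for $(E_6,3_\rs)$ and $(E_8,5_\rs)$, doing all computations with the explicit $w=w_h^{10}$ and the $40$ roots $\beta_i$ listed above.

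The first step is to compute $I=T^\theta$. Since $\vartheta=1$, we have $I=\{t\in T\mid w(t)=t\}$. Using the action of $w$ on the simple coroots (read off from $(w\alpha_i)$ via the Cartan matrix, as in the $(F_4,3_\rs)$ case), I would write $t=\prod\check\alpha_i(x_i)$ and solve $w(t)=t$. Because $M(w)=\Phi_3$, $1+w+w^2=0$ on $X_*(T)$. Hence $I\cong X_*(T)/(1-w)X_*(T)$, which is annihilated by $3$, and it has order $\det(1-w)=\Phi_3(1)^4=3^4$. So $I\cong\mu_3^4$, and it suffices to check that the four displayed elements $\gamma_1,\dots,\gamma_4$ are $w$-fixed and independent.

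The second step is to compute, via \eqref{action-m=3}, the generators $\nu_i=\check\beta_i(\zeta_3^2)\theta\check\beta_i(\zeta_3)$ of $I_{t_i}$ for $i=1,\dots,40$, expressed as $\gamma^{abcd}$. Using \eqref{action-m=3} again, I would compute the action of the distinguished reflections, or at least of a generating set. This is the main obstacle, since it is a substantial case check over $40$ reflections and a rank-$4$ lattice. I would organise it by noting that $t_i(\nu_j)=\nu_j\nu_i^{\langle\check\beta_j-\theta\check\beta_j,\beta_i\rangle}$. In the dual basis, $t_i$ therefore acts on $\hat I\cong\mathbb F_3^4$ as a "transvection" fixing the characters trivial on $I_{t_i}$. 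In particular $t_i\in W_\chi$ whenever $\chi|_{I_{t_i}}=1$.

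For $\chi_1$, the next step is to determine which $\nu_i$ lie in $\ker\chi_1$, i.e. have trivial $\gamma_1$-exponent. These reflections generate $W_{\chi_1}^0$, and I expect the listed indices $7,8,15,16,23,26,34,36,37,38,39,40$ together with $12$. I would then identify this group as $G_{25}\times\mu_3$. The twelve reflections $t_7,\dots,t_{40}$ should satisfy the $G_{25}$ braid relations, checked on a generating triple as was done for $(E_6,3_\rs)$. The reflection $t_{12}$ should commute with all of them, e.g. by checking that its root subsystem $S_{12}$ is orthogonal to the others. Its order is then $648\cdot 3=1944$.

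The final step is the orbit count. Since $|W|=|G_{32}|=155520$ and $|\hat I|-1=80$, transitivity of $W$ on nontrivial characters is equivalent to $|W_{\chi_1}|=155520/80=1944$. I would show that the orbit of $\chi_1$ under the transvections generates all $80$ nontrivial characters by iteratively applying the $t_i$, exactly as in Lemma~\ref{lem-case1}. Then $|W_{\chi_1}|=1944=|W_{\chi_1}^0|$ forces $W_{\chi_1}=W_{\chi_1}^0$, and $\{\chi_0,\chi_1\}$ represents $\hat I/W$.
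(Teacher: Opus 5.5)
Your plan is correct and is essentially the paper's argument: tabulate the $\nu_i$ and the $t_i$-action on $I$ via \eqref{action-m=3}, read off the thirteen reflections with $\chi_1(\nu_i)=1$ (indices $7,8,12,15,16,23,26,34,36,37,38,39,40$, which do match the list), and conclude by orbit--stabilizer from $|G_{32}|/80=1944=|G_{25}\times\mu_3|$, exactly as in Lemma~\ref{lem-case1}. Your computation $|I|=\det(1-w)=3^4$ and the transvection viewpoint are tidy additions. The final count sandwiches $\langle t_i\rangle\subseteq W_{\chi_1}^0\subseteq W_{\chi_1}$, so you never need the converse (that $\chi_1|_{I_{t_i}}\neq 1$ implies $t_i\notin W_{\chi_1}$) separately.
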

Let $\gamma^{abcd}:=\gamma_1^a\gamma_2^b\gamma_3^c\gamma_4^d$.  
Using~\eqref{action-m=3}, one checks that $I_i=\langle\nu_i\rangle$
{\tiny\begin{align*}
\nu_i=&\gamma^{2112},\gamma^{2010},\gamma^{2122},\gamma^{2011},\gamma^{1102},\gamma^{1120},\gamma^{0221},\gamma^{0101},\gamma^{1201},\gamma^{1021},\gamma^{1100},\gamma^{0110},\gamma^{2222},\gamma^{1011},\gamma^{0022},\gamma^{0212},\gamma^{2120},\\
&\gamma^{2202},\gamma^{1200},\gamma^{2110},\gamma^{1112},\gamma^{1212},\gamma^{0210},\gamma^{1121},\gamma^{2211},\gamma^{0021},\gamma^{2101},\gamma^{2002},\gamma^{1222},\gamma^{2220},\gamma^{1010},\gamma^{1002},\gamma^{2021},\gamma^{0122},\\
&\gamma^{2000},\gamma^{0010},\gamma^{0111},\gamma^{0201},\gamma^{0002},\gamma^{0100}
\\
\text{ and }t_i(\gamma_1)=&\gamma^{2112},\gamma^{2012},\gamma^{2121},\gamma^{2010},\gamma^{1100},\gamma^{1122},\gamma^{0221},\gamma^{0102},\gamma^{1200},\gamma^{1022},\gamma^{1101},\gamma^{0110},\gamma^{2222},\gamma^{1010},\gamma^{0020},\gamma^{0210},\gamma^{2122}\\&
\gamma^{2201},\gamma^{1202},\gamma^{2110},\gamma^{1112},\gamma^{1210},\gamma^{0211},\gamma^{1120},\gamma^{2212},\gamma^{0022},\gamma^{2102},\gamma^{2002},\gamma^{1222},\gamma^{2220},\gamma^{1012},\gamma^{1002},\gamma^{2022},\gamma^{0121},\\&\gamma^{2000},\gamma^{0012},\gamma^{0111},\gamma^{0200},\gamma^{0002},\gamma^{0101}\\
t_i(\gamma_2)=&\gamma^{2102},\gamma^{2010},\gamma^{2112},\gamma^{2021},\gamma^{1122},\gamma^{1120},\gamma^{0201},\gamma^{0111},\gamma^{1211},\gamma^{1001},\gamma^{1100},\gamma^{0110},\gamma^{2212},\gamma^{1021},\gamma^{0012},\gamma^{0202},\gamma^{2120},\\&\gamma^{2222},\gamma^{1200},\gamma^{2110},\gamma^{1102},\gamma^{1202},\gamma^{0210},\gamma^{1101},\gamma^{2221},\gamma^{0001},\gamma^{2111},\gamma^{2022},\gamma^{1212},\gamma^{2220},\gamma^{1010},\gamma^{1022},\gamma^{2001},\gamma^{0112},\\&\gamma^{2000},\gamma^{0010},\gamma^{0121},\gamma^{0211},\gamma^{0022},\gamma^{0100}\\
t_i(\gamma_3)=&\gamma^{2112},\gamma^{2210},\gamma^{2122},\gamma^{2111},\gamma^{1002},\gamma^{1220},\gamma^{0121},\gamma^{0001},\gamma^{1201},\gamma^{1021},\gamma^{1200},\gamma^{0110},\gamma^{2222},\gamma^{1011},\gamma^{0122},\gamma^{0012},\gamma^{2020},\\&\gamma^{2202},\gamma^{1000},\gamma^{2010},\gamma^{1012},\gamma^{1112},\gamma^{0210},\gamma^{1121},\gamma^{2011},\gamma^{0221},\gamma^{2201},\gamma^{2002},\gamma^{1122},\gamma^{2120},\gamma^{1110},\gamma^{1202},\gamma^{2121},\gamma^{0222},\\&\gamma^{2200},\gamma^{0010},\gamma^{0011},\gamma^{0101},\gamma^{0102},\gamma^{0100}\\
t_i(\gamma_4)=&\gamma^{1112},\gamma^{2010},\gamma^{1122},\gamma^{2011},\gamma^{0102},\gamma^{0120},\gamma^{1221},\gamma^{2101},\gamma^{2201},\gamma^{1021},\gamma^{0100},\gamma^{2110},\gamma^{0222},\gamma^{1011},\gamma^{0022},\gamma^{1212},\gamma^{1120},\\&\gamma^{0202},\gamma^{2200},\gamma^{1110},\gamma^{0112},\gamma^{2212},\gamma^{1210},\gamma^{0121},\gamma^{0211},\gamma^{0021},\gamma^{1101},\gamma^{2002},\gamma^{2222},\gamma^{0220},\gamma^{1010},\gamma^{1002},\gamma^{2021},\gamma^{2122},\\&\gamma^{2000},\gamma^{0010},\gamma^{2111},\gamma^{1201},\gamma^{0002},\gamma^{2100}\,.
\end{align*}}
Since $\theta|_{(G_{t_i}})_\der\cong (A_2,2_\rs)$ and $\chi_1|_{I_{t_i}}=1$ if and only if $t_i\in W_{\chi_1}^0$, we have
$
R_{\chi,t_i}=(z-1)^3$ (resp. $z^3-1$) if $t_i\in W_{\chi}^0$ (resp. otherwise).

\section{Stable gradings of rank 1}\label{sec-rkone}
 
In this section we prove Theorem~\ref{mainthm} for the stable $\bZ/m\bZ$-gradings of rank 1 (when $m$ is not the (twisted) Coxeter number). We  follow the approach in~\cite[\S4]{VX3}. We regard the pair $(G_0,\Lg_1)$ as a polar representation and consider the regular finite prehomogeneous vector space $(G_0\times\bC^*,\Lg_1)$.

Let us write $\bC[\Lg_1]^{G_0}=\bC[f]$. In our GIT stable setting, we have $f=f_0f_1^{n_1}\ldots f_k^{n_k}$, where $f_0,\ldots,f_k$ are fundamental semi-invariants with corresponding characters $\psi_i:G_0\to\bC$, and we have chosen a semi-invariant $f_0$ so that $f_0$ occurs in $f$ with multiplicity 1.

Consider tuples $s_\bullet=(s_1,\ldots,s_k)\in\bQ^k$ such that $$\psi=\psi_{s_\bullet}=\prod_{i=1}^k\psi_i^{s_i}\in X^*(G_0).$$
We show that in each case, except for $(G_2,3_\rs)$, each character $\chi\in\hat I$ arises from such a $\psi$ as above, that is, $\chi=\psi|_{I}$. Let $D$ be the generating invariant of $\on{Sym}(\Lg_1)^{G_0}$ viewed as a differential operator on $\Lg_1$. Consider the b-function
\beq\label{eqn-bfn}
Df^su=b(s)f^{s-1}u,\ u=\prod_{i=1}^kf_i^{-s_i}.
\eeq
By~\cite[Theorem 4.3]{VX3}, the calculation of the polynomials $R_\chi(z)$ reduces to the calculation of the b-functions~\eqref{eqn-bfn}, that is, we have $R_\chi(z)=b_{\exp}(z)$ when $\chi=\psi_{s_\bullet}|_I$. Theorem~\ref{mainthm} follows from our explicit calculations in the rest of the section. We will calculate the fundamental semi-invariants and the b-functions $b(s)=\prod_i(s-a_i)$ (up to a constant factor) and $b_{\exp}(z):=\prod_i(z-e^{2\pi\mathbf{i}a_i})$ in each case.

\subsection{Kac diagram and the representation $(G_0,\Lg_1)$} Let $\theta$ be a grading corresponding to the Kac diagram with Kac coordinates $s_i\in\{0,1\}$, where $i$ corresponds to nodes of the extended Dynkin diagram as specified in~\S\ref{ssec-dynkin}. We use the Kac diagrams determined in~\cite{RLYG}.

According to~\cite[Proposition 8.6]{Ka}, we have
\begin{enumerate}
\item Let $i_1,\ldots,i_a$ be the indices of the Kac diagram such that $s_{i_j}=0$. Then the Lie algebra $\Lg_0\cong\cZ_{\Lg_0}\oplus(\Lg_0)_{\on{der}}$, where $\cZ_{\Lg_0}$ is the center  ($n-a$ dimensional), and $(\Lg_0)_{\on{der}}$ is a semisimple Lie algebra whose Dynkin diagram is the subdiagram of the affine Dynkin diagram consisting of the vertices $i_1,\ldots,i_a$. 
\item Let $j_1,\ldots,j_b$ be the indices of the Kac diagram such that $s_{j_k}=1$. Then the $\Lg_0$-module $\Lg_1$ is isomorphic to a direct sum of $b$ irreducible modules with highest weights $-\alpha_{j_1},\ldots,-\alpha_{j_b}$. 
\end{enumerate}
This allows us to determine the structure of $G_0$ and $\Lg_1$ as a representation of $G_0$.

For our explicit calculations, we will use the fact that $G_0=Z_{G_0}.(G_0)_{\der}$ and display $\Lg_1$ as a representation of $Z_{G_0}$ (which acts by central characters on irreducible factors of $\Lg_1$), and a representation of $(G_0)_{\der}$, which is a simply connected group.

We will use the following notations 
\begin{itemize}
\item $\mathbf{1}$ denotes the trivial representation of $(G_0)_{\der}$
\item $V(\lambda)$ denotes the irreducible representation of highest weight $\lambda$
\item $\bC^2$, (resp. $(\bC^2)^*$) denotes the (resp. dual of) standard representation $V(\omega)$ of $SL_2$ with realisation: $g:v\mapsto gv$ (resp. $g:v\mapsto(g^t)^{-1}v$), where $\omega$ denotes the fundamental weight of $SL_2$
\item $\on{Sym}^2(\bC^2)$ denotes the irreducible representation $V(2\omega)$ of $SL_2$ with realisation: $g:X\mapsto gXg^t$, where $X$ is a symmetric $2\times 2$ matrix
\item The $SL_2\times SL_2$ representation $\bC^2\boxtimes(\bC^2)^*$ is realised as: $(g_1,g_2):X\mapsto g_1Xg_2^{-1}$ for $X\in M_{2\times 2}(\bC)$.  
\item The $SL_2^{\times 4}$ representation $\bC^2\boxtimes(\bC^2)^*\boxtimes\bC^2\boxtimes\mathbf{1}$ is realised as $$(g_1,g_2,g_3,g_4):(X,Y)\mapsto (g_1 Xg_2^{-1},g_1Yg_2^{-1})\,g_3^t$$ for
 $X,Y\in M_{2\times 2}(\bC)$ where $(X, Y)\,g_3^t:=(aX+bY, cX+dY)$ for $g_3=\left(\begin{matrix}a&b\\c&d\end{matrix}\right)$.
\end{itemize} 

When describing $Z_{G_0}$ we will fix a fundamental $\theta$-stable maximal torus $T_{\mathbf{f}}$ such that $(T_{\mathbf{f}}^{\vartheta_\mathbf{f}})^0$ is a maximal torus of $G_0$, where $\vartheta_\mathbf{f}\in\on{Out}G$ fixes a pinning with respect to $T_{\mathbf{f}}$ (see~\cite[(2.4)]{VX2}). Each $z\in Z_{G_0}$ can be written in the form $z=\prod_{i=1}^n\check\alpha_i(t_i)$, $t_i\in\bC^*$, where $n=\on{rank}G$.

\subsection{Type $(F_4,8_\rs)$}Recall that~\cite{Le} the Kac diagram is $ \xymatrix{{\substack{1\\\circ\\-\alpha_0}}\ar@{-}[r]&{\substack{1\\\bullet\\\alpha_1}}\ar@{-}[r]&{\substack{1\\\bullet\\\alpha_2}}\ar@2{=>}[r]&{\substack{0\\\bullet\\\alpha_3}}\ar@{-}[r]&{\substack{1\\\bullet\\\alpha_4}}}$.  As a $(G_0)_\der\cong SL_2$-representation
\beqn
\Lg_1\cong\mathbf{1}\oplus\mathbf{1}\oplus \on{Sym}^2\bC^2\oplus (\bC^2)^*
\eeqn
where 
\begin{subequations}\label{act1}
\beq\bft\cdot(x_1,x_2,X,v)=(t_1x_1,t_1^{-2}t_2x_2,t_1t_2^{-2}t_3^2X,t_3t_4^{-2}v),\ \bft\in Z_{G_0}
\eeq
\beq
g\cdot(x_1,x_2,X,v)=(x_1,x_2,gXg^t,(g^t)^{-1}v),\ g\in SL_2\,.
\eeq
\end{subequations}
Note that $\mathbf{t}=\prod_{i=1}^4\check\alpha_i(t_i)\in Z_{G_0}$ if and only if $t_3^2=t_2t_4$.

The fundamental semi-invariants and the corresponding characters are
\begin{align*}
&f_0=x_1&&\psi_0(\mathbf{t})=t_1&&f_1=v^tXv&&\psi_1(\mathbf{t})=t_1t_4^{-2}\\
&f_2=\det X&&\psi_2(\mathbf{t})=t_1^{2}t_2^{-2}t_4^{2}&&
f_3=x_2&&\psi_3(\mathbf{t})=t_1^{-2}t_2
\end{align*}
where $\bft\in Z_{G_0}$. 
The invariant is $f=f_0f_1f_2f_3^2$. Applying~\cite[Theorem II.1]{S} (with $\chi_1=\chi_2=1$, $e_1=e_2=1$ and $n_1=1,n_2=2$) we obtain that
\beq\label{eqn-bfn-1}
D_{f_1f_2}(f_1^{z_1}f_2^{z_2})=c_0z_1^2z_2(z_1+z_2+\tfrac{1}{2})(z_1+z_2-\tfrac{1}{2})f_1^{z_1-1}f_2^{z_2-1}\,\text{ where $c_0$ is a constant}.
\eeq
Hence we have
\bern
&&D_f(f^sf_1^{-s_1}f_2^{-s_2}f_3^{-s_3})=cb(s)f^{s-1}f_1^{-s_1}f_2^{-s_2}f_3^{-s_3}
\eern
where
$$b(s)=s\left(s-s_1\right)^2\left(s-s_2\right)\left(s-\tfrac{s_3}{2}\right)\left(s-\tfrac{s_3+1}{2}\right)\left(s-\tfrac{2s_1+2s_2-1}{4}\right)\left(s-\tfrac{2s_1+2s_2+1}{4}\right).$$
Let $\psi_{s_\bullet}=\prod_{i=1}^3\psi_i^{s_i}$. Then $\psi_{s_\bullet}\in X^*(G_0)$ if and only if $s_1,s_3\in\mathbb{Z}$ and $s_2\in\frac{1}{2}\bZ$.
We have
\bern
b(s)=s^5(s-\tfrac{1}{2})(s-\tfrac{1}{4})(s+\tfrac{1}{4})\text{ and }b_{\on{exp}}(z)=\Phi_1^5\Phi_2\Phi_4&&\text{ when $s_1=s_2=s_3=0$}\\
b(s)=s^5(s-\tfrac{1}{2})^3\text{ and }b_{\on{exp}}(z)=\Phi_1^5\Phi_2^3&&\text{ when $s_1=s_3=0$ and $s_2=\tfrac{1}{2}$}.
\eern

Let $a=(1,1,I,(1,0)^t)\in\Lg_1$ be a generic element (as $f(a)\neq 0$). Let $\bft\in Z_{G_0}$ and $g\in SL_2$. We have
\begin{equation}\label{central-1}
\bega
\bft g\cdot a=(t_1,t_1^{-2}t_2,t_1t_2^{-2}t_3^2gg^t,t_3t_4^{-2}(g^t)^{-1}(1,0)^t)=a\\
\Leftrightarrow \bft =\check\alpha_3(t_3)\check\alpha_4(t_3^2),\ g=\on{diag}(t_3^{-3},t_3^3)=\check\alpha_3(t_3^{-3}),\ \text{ where } t_3^4=1.
\eega
\end{equation}
So 
\begin{align*}
I=Z_{G_0}(a)=\{\check\alpha_3(t_3^{-2})\check\alpha_4(t_3^2)\mid t_3^4=1\}=\langle \gamma:=\check\alpha_3(-1)\check\alpha_4(-1)\rangle\cong\mu_2\,.
\end{align*}
Moreover, $\psi_{s_\bullet}(\bft g)=t_1^{s_1+2s_2-2s_3}t_2^{-2s_2+s_3}t_4^{-2s_1+2s_2}$ implies that $\psi_{s_\bullet}(\gamma)=(-1)^{-2s_1+2s_2}$. So $\psi_{0,0,0}|_I=\chi_0$, and $\psi_{0,1/2,0}|_I=\chi_1$, the nontrivial character of $I$.

Using~\eqref{central-1} and~\eqref{act1} one checks that $\on{det}(\gamma|_{\Lg_1})=1$.  So the character $\tau:I\to\{\pm 1\}$ is trivial.

\subsection{Type $(E_6,9_\rs)$} The Kac diagram is $\xymatrix@R-1.5pc{{\substack{1\\\bullet\\\alpha_1}}\ar@{-}[r]&{\substack{1\\\bullet\\\alpha_3}}\ar@{-}[r]&{\substack{0\\\bullet\\\alpha_4}}\ar@{-}[r]&{\substack{1\\\bullet\\\alpha_5}}\ar@{-}[r]&{\substack{1\\\bullet\\\alpha_6}}\\&&{\substack{1\\\bullet\\\alpha_2}}\ar@{-}[u]\ar@{-}[d]&&\\&&{\substack{1\\\circ\\-\alpha_0}}&&}$.  As a $(G_0)_\der\cong SL_2$-representation, 
$
 \Lg_1\cong\mathbf{1}\oplus\mathbf{1}\oplus\mathbf{1}\oplus \bC^2\oplus (\bC^2)^*\oplus (\bC^2)^*$ 
and 
 \bern
 &&\bft\cdot(x_0,x_1,x_2,v_1,v_2,v_3)=\left(t_1^{-2}t_3x_0,t_5t_6^{-2}x_1,t_2x_2,t_4t_5^{-2}t_6v_1,t_2^{-2}t_4v_2,t_1t_3^{-2}t_4v_3\right),\ \bft\in Z_{G_0}.
 \eern
  We have $\bft=\prod_{i=1}^6\check\alpha_i(t_i)\in Z_{G_0}$ if and only if $t_4^2=t_2t_3t_5$. The fundamental semi-invariants and the corresponding characters are
\begin{align*}
&f_0=x_0&&\psi_0(\bft)=t_1^{-2}t_3&&f_1=x_1&&\psi_1(\bft)=t_5t_6^{-2}\\
&f_2=x_2&&\psi_2(\bft)=t_2&&
f_3=\on{det}\left(\begin{matrix}v_2&v_3\end{matrix}\right)&&\psi_3(\bft)=t_1t_2^{-1}t_3^{-1}t_5\\
&f_4=v_1^t v_2&&\psi_4(\bft)=t_2^{-1}t_3t_5^{-1}t_6&&
f_5=v_1^tv_3&&\psi_5(\bft)=t_1t_2t_3^{-1}t_5^{-1}t_6
\end{align*}
where $\bft\in Z_{G_0}$. The invariant is $f=f_0f_1f_2f_3f_4f_5$. 
One checks that
\begin{align}\label{eqn-diff-2}
D_{f_i}(f_3^{z_3}f_4^{z_4}f_5^{z_5})=z_i(1+z_3+z_4+z_5)f_3^{z_3}f_4^{z_4}f_5^{z_5}f_i^{-1},\,i=3,4,5.
\end{align}
It follows that
$
D_f(f^s\prod_{i=1}^5f_i^{-s_i})=cb(s)f^{s-1}\prod_{i=1}^5f_i^{-s_i}$
where
\bern
&&b(s)=s(s-s_1)(s-s_2)(s-s_3)(s-s_4)(s-s_5)\\&&\qquad\qquad\left(s-\tfrac{s_3+s_4+s_5}{3}\right)\left(s-\tfrac{s_3+s_4+s_5+1}{3}\right)
\left(s-\tfrac{s_3+s_4+s_5-1}{3}\right).
\eern
Let $\psi_{s_\bullet}=\prod_{i=1}^5\psi_i^{s_i}$. We have $\psi_{s_\bullet}\in X^*(G_0)$ if and only if 
\beqn 
\text{$s_1\in\tfrac{1}{3}\bZ$, $s_1+s_2\in\bZ$, $s_1-s_3\in\bZ$, $s_1+s_5\in\bZ$, and $s_4\in\bZ$.}
\eeqn
We have
\begin{align*}
&b(s)=s^7(s-\tfrac{1}{3})(s+\tfrac{1}{3})\text{ and }b_{\exp}(z)=\Phi_1^7\Phi_3&&\text{ when }s_i=0\\
&b(s)=s^3(s-\tfrac{1}{3})^3(s+\tfrac{1}{3})^3\text{ and }b_{\exp}(z)=\Phi_1^3\Phi_3^3&&\text{ when }s_4=0, s_1=s_3=\tfrac{1}{3},s_2=s_5=-\tfrac{1}{3}\\
&\ &&\text{ or }s_4=0, s_1=s_3=-\tfrac{1}{3},s_2=s_5=\tfrac{1}{3}\,.
\end{align*}
 
Let $a=(1,1,1,(1,1)^t,(1,0)^t,(0,1)^t)\in\Lg_1$. Let $\bft\in Z_{G_0}$ and $g\in SL_2$. We have
\begin{align*}
&\bft g\cdot a=a\implies \bft =\check\alpha_1(t_6^2)\check\alpha_3(t_6)\check\alpha_4(t_4)\check\alpha_5(t_6^2)\check\alpha_6(t_6),\ g=\on{diag}(t_4,t_4),\text{ where } t_4^2=1,t_6^3=1.
\end{align*}
Hence
$I=Z_{G_0}(a)=\langle\gamma:= \check\alpha_1(\zeta_3^2)\check\alpha_3(\zeta_3)\check\alpha_5(\zeta_3^2)\check\alpha_6(\zeta_3)\rangle=Z_G\cong\mu_3$. 
Moreover,  $\psi_{s_\bullet}(\gamma)=(\zeta_3)^{-3s_3}$. So $\psi_{0,0,0,0,0}(\gamma)=1$, $\psi_{0,\frac{1}{3},-\frac{1}{3},0,\frac{1}{3},-\frac{1}{3}}(\gamma)=\zeta_3^2$ and $\psi_{0,-\frac{1}{3},\frac{1}{3},0,-\frac{1}{3},\frac{1}{3}}(\gamma)=\zeta_3$.

\subsection{Type $({}^2E_6,12_\rs)$} 
The Kac diagram is $ \xymatrix{{\substack{1\\\circ\\-\beta_0}}\ar@{-}[r]&{\substack{1\\\bullet\\{\beta_1}}}\ar@{-}[r]&{\substack{0\\\bullet\\\beta_2}}\ar@2{<=}[r]&{\substack{1\\\bullet\\\beta_3}}\ar@{-}[r]&{\substack{1\\\bullet\\\beta_4}}}$ where $\beta_1=\frac{\alpha_1+\alpha_6}{2}$, $\beta_2=\frac{\alpha_3+\alpha_5}{2}$, $\beta_3=\alpha_4$, $\beta_4=\alpha_2$ and $\beta_0=2\beta_1+3\beta_2+2\beta_3+\beta_4$. Let $$T_{\mathbf{f}}^\vartheta=\{\bft:=\check\beta_1(t_1)\check\beta_2(t_2)\check\beta_3(t_3)\check\beta_4(t_4)\mid t_i\in\bC^*\}\subset G_0.$$
Note that $\bft\in Z_{G_0}$ if and only if $t_2^2=t_1t_3$.  
As a $(G_0)_\der\cong SL_2$-representation $\Lg_1\cong\mathbf{1}\oplus\mathbf{1}\oplus (\bC^2)^*\oplus\on{Sym}^2\bC^2
$
and 
\bern
&&\bft\cdot(x_0,x_1,v,X)=(t_1x_0,t_3t_4^{-2}x_1,t_1^{-2}t_2v,t_2^2t_3^{-2}t_4X), \ \bft\in Z_{G_0}\,.\eern
The fundamental semi-invariants and the corresponding characters are
\begin{align*}
&f_0=\on{Det}X&&\psi_0(\bft)=t_1^{2}t_3^{-2}t_4^2&&f_1=v^tXv&&\psi_1(\bft)=t_1^{-2}t_4\\
&f_2=x_0&&\psi_2(\bft)=t_1&&
f_3=x_1&&\psi_3(\bft)=t_3t_4^{-2}
\end{align*}
where $\bft\in Z_{G_0}$. The invariant is $f=f_0f_1^2f_2^2f_3^2$. Applying~\eqref{eqn-bfn-1} we obtain that $D_f(f^s)=cb(s)f^{s-1}$ where $c$ is a constant and  
\beqn
b(s)=s^5(s-\tfrac{1}{2})^5(s+\tfrac{1}{6})(s-\tfrac{1}{6})\,.
\eeqn
So $b_{\exp}(z)=\Phi_1^5\Phi_2^5\Phi_6$.

\subsection{Type $(E_7,14_\rs)$}

The Kac diagram is $$\xymatrix@R-1pc{{\substack{1\\\circ\\-\alpha_0}}\ar@{-}[r]&{\substack{1\\\bullet\\\alpha_1}}\ar@{-}[r]&{\substack{1\\\bullet\\\alpha_3}}\ar@{-}[r]&{\substack{0\\\bullet\\\alpha_4}}\ar@{-}[r]&{\substack{1\\\bullet\\\alpha_5}}\ar@{-}[r]&{\substack{1\\\bullet\\\alpha_6}}\ar@{-}[r]&{\substack{1\\\bullet\\\alpha_7}}\\&&&{\substack{1\\\bullet\\\alpha_2}}\ar@{-}[u]&&&}.$$
We have $\Lg_1\cong\mathbf{1}\oplus\mathbf{1}\oplus\mathbf{1}\oplus\mathbf{1}\oplus \bC^2\oplus (\bC^2)^*\oplus (\bC^2)^*$ as a $(G_0)_{\der}\cong SL_2$-representation and 
\bern
&&\bft\cdot (x_0,x_1,x_2,x_3,v_1,v_2,v_3)=(t_1x_0,t_1^{-2}t_3x_1,t_5t_6^{-2}t_7x_2,t_6t_7^{-2}x_3,t_4t_5^{-2}t_6v_1,t_2^{-2}t_4v_2,t_1t_3^{-2}t_4v_3)
\eern
for  $\bft=\prod_{i=1}^7\check\alpha_i(t_i)\in Z_{G_0}$.  
 We have $\bft\in Z_{G_0}$ if and only if $t_4^2=t_2t_3t_5$.   

The fundamental semi-invariants and the corresponding characters are
\begin{align*}
&f_0=x_0&&\psi_0(\bft)=t_1&&
f_1=x_1&&\psi_1(\bft)=t_1^{-2}t_3\\
&f_2=x_2&&\psi_2(\bft)=t_5t_6^{-2}t_7&&
f_3=\on{det}\left(\begin{matrix}v_2&v_3\end{matrix}\right)&&\psi_3(\bft)=t_1t_2^{-1}t_3^{-1}t_5\\
&f_4=v_1^tv_2&&\psi_4(\bft)=t_2^{-1}t_3t_5^{-1}t_6&&
f_5=v_1^tv_3&&\psi_5(\bft)=t_1t_2t_3^{-1}t_5^{-1}t_6\\
&f_6=x_3&&\psi_6(\bft)=t_6t_7^{-2}
\end{align*}
where $\bft\in Z_{G_0}$.
The invariant is $f=f_0f_1^2f_2^2f_3f_4f_5^2f_6$. 

Using~\eqref{eqn-diff-2}, we obtain that
$D_f(f^s\prod_{i=1}^6f_i^{-s_i})=cb(s)f^{s-1}\prod_{i=1}^6f_i^{s_i}$ 
where  
\bern
b(s)&=&s(s-s_6)\left(s-\tfrac{s_1}{2}\right)\left(s-\tfrac{s_1+1}{2}\right)\left(s-\tfrac{s_2}{2}\right)\left(s-\tfrac{s_2+1}{2}\right)\left(s-s_3\right)\left(s-s_4\right)\left(s-\tfrac{s_5}{2}\right)\\&&\left(s-\tfrac{s_5+1}{2}\right)\left(s-\tfrac{s_3+s_4+s_5}{4}\right)\left(s-\tfrac{s_3+s_4+s_5+1}{4}\right)
\left(s-\tfrac{s_3+s_4+s_5+2}{4}\right)\left(s-\tfrac{s_3+s_4+s_5-1}{4}\right).
\eern
Let $\psi_{s_\bullet}=\prod_{i=1}^5\psi_i^{s_i}$. Then $\psi_{s_\bullet}\in X^*(G_0)$ if and only if 
$
s_1,s_2,s_4\in\bZ,\,s_3\in\tfrac{1}{2}\bZ,\,s_3+s_5\in\bZ,s_3+s_6\in\bZ.
$
We have
\begin{align*}
&b(s)=s^8(s-\tfrac{1}{2})^4(s-\tfrac{1}{4})(s+\tfrac{1}{4})\text{ and }b_{\exp}(z)=\Phi_1^8\Phi_2^4\Phi_4&&\text{when $s_i=0$}\\
&b(s)=s^5(s-\tfrac{1}{2})^5(s-\tfrac{1}{4})^2(s+\tfrac{1}{4})^2\text{and }b_{\exp}(z)=\Phi_1^5\Phi_2^5\Phi_4^2&&\text{when }s_1=s_2=s_4=0\\&\ &&s_3=s_6=\tfrac{1}{2}\text{ and }s_5=-\tfrac{1}{2}\,.
\end{align*}

Let $a=(1,1,1,1,(1,1)^t,(1,0)^t,(0,1)^t)\in\Lg_1$. One checks that 
\begin{align*}
I&=Z_{G_0}(a)=\{\bft.g,\bft=\check\alpha_2(t)\check\alpha_5(t)\check\alpha_7(t)\check\alpha_4(t_4),g=\on{diag}(t_4,t_4)\mid t_4^2=1,t^2=1\}
\\
&=\langle\check\alpha_2(-1)\check\alpha_5(-1)\check\alpha_7(-1)\rangle=Z_G\cong\mu_2\,.
\end{align*}
Moreover, $\psi_{s_\bullet}(\gamma)=(-1)^{-2s_2+2s_4+2s_6}$.

\subsection{Type $(E_8,24_\rs)$} The Kac diagram is $$\xymatrix@R-1pc{{\substack{1\\\bullet\\\alpha_1}}\ar@{-}[r]&{\substack{1\\\bullet\\\alpha_3}}\ar@{-}[r]&{\substack{0\\\bullet\\\alpha_4}}\ar@{-}[r]&{\substack{1\\\bullet\\\alpha_5}}\ar@{-}[r]&{\substack{1\\\bullet\\\alpha_6}}\ar@{-}[r]&{\substack{1\\\bullet\\\alpha_7}}\ar@{-}[r]&{\substack{1\\\bullet\\\alpha_8}}\ar@{-}[r]&{\substack{1\\\circ\\-\alpha_0}}\\&&{\substack{1\\\bullet\\\alpha_2}}\ar@{-}[u]&&&&}\,.$$
As a $(G_0)_\der\cong SL_2$-representation
$\Lg_1\cong\triv^{\oplus5}\oplus \bC^2\oplus (\bC^2)^*\oplus (\bC^2)^*$ and  
\begin{align*}
\bft\cdot(x_0,x_1,x_2,x_3,x_4,v_1,v_2,v_3)&=(t_8x_0,t_1^{-2}t_3x_1,t_5t_6^{-2}t_7x_2,t_6t_7^{-2}t_8x_3,t_7t_8^{-2}x_4,\\&\qquad t_1t_3^{-2}t_4v_1,t_2^{-2}t_4v_2,t_4t_5^{-2}t_6v_3) 
\end{align*}
for $\bft=\prod_{i=1}^8\check\alpha_i(t_i)\in Z_{G_0}$. 
 We have $\bft\in Z_{G_0}$ if and only if  $t_4^2=t_2t_3t_5$.  

The fundamental semi-invariants and the corresponding characters are
\begin{align*}
&f_0=x_0&&\chi_0=t_8&&
f_1=x_1&&\chi_1=t_1^{-2}t_3\\
&f_2=x_2&&\chi_2=t_5t_6^{-2}t_7&&
f_3=v_1^t\,v_2&&\chi_3=t_1t_2^{-1}t_3^{-1}t_5\\
&f_4=\on{det}\left(\begin{matrix}v_2&v_3\end{matrix}\right)&&\chi_4=t_2^{-1}t_3t_5^{-1}t_6&&
f_5=v_1^t\,v_3&&\chi_5=t_1t_2t_3^{-1}t_5^{-1}t_6\\
&f_6=x_3&&\chi_6=t_6t_7^{-2}t_8&&
f_7=x_4&&\chi_7=t_7t_8^{-2}.
\end{align*}
The invariant is $f=f_0f_1^2f_2^4f_3f_4^2f_5^3f_6^3f_7^2$.  
We have $\prod_{i=1}^6\psi_i^{s_i}\in X^*(G_0)$ if and only if 
$
s_i\in\bZ.
$
Applying~\eqref{eqn-diff-2}, we obtain that $D_f(f^s)=cb(s)f^{s-1}$, where $c$ is a constant and $$b(s)=s^9(s-\tfrac{1}{2})^5(s-\tfrac{1}{4})(s-\tfrac{3}{4})(s-\tfrac{1}{3})^3(s-\tfrac{2}{3})^3(s-\tfrac{1}{6})(s+\tfrac{1}{6}).$$
So $b_{\exp}(z)=\Phi_1^9\Phi_2^5\Phi_3^3\Phi_4\Phi_6.$

\subsection{Type $(E_8,20_\rs)$}The Kac diagram is $$\xymatrix@R-1pc{{\substack{1\\\bullet\\\alpha_1}}\ar@{-}[r]&{\substack{1\\\bullet\\\alpha_3}}\ar@{-}[r]&{\substack{0\\\bullet\\\alpha_4}}\ar@{-}[r]&{\substack{1\\\bullet\\\alpha_5}}\ar@{-}[r]&{\substack{0\\\bullet\\\alpha_6}}\ar@{-}[r]&{\substack{1\\\bullet\\\alpha_7}}\ar@{-}[r]&{\substack{1\\\bullet\\\alpha_8}}\ar@{-}[r]&{\substack{1\\\circ\\-\alpha_0}}\\&&{\substack{1\\\bullet\\\alpha_2}}\ar@{-}[u]&&&&}.$$
As a $(G_0)_\der\cong SL_2\times SL_2$-representation
$$\Lg_1\cong\triv^{\oplus3}\oplus (\bC^2)^*\boxtimes\triv\oplus (\bC^2)^*\boxtimes\triv\oplus \triv\boxtimes\bC^2\oplus \bC^2\boxtimes(\bC^2)^*$$
where  
\begin{align*}
&\bft\cdot(x_0,x_1,x_2,v_1,v_2,v_3,X)=(t_8x_0,t_1^{-2}t_3x_1,t_7t_8^{-2}x_2,t_1t_3^{-2}t_4v_1,t_2^{-2}t_4v_2,t_6t_7^{-2}t_8v_3,t_4t_5^{-2}t_6X)\\
&(g_1,g_2)\cdot(x_0,x_1,x_2,v_1,v_2,v_3,X)=(x_0,x_1,x_2,(g_1^t)^{-1}v_1,(g_1^t)^{-1}v_2,g_2v_3, g_1Xg_2^{-1})
\end{align*}
 for $\bft=\prod_{i=1}^8\check\alpha_i(t_i)\in Z_{G_0}$ and $(g_1,g_2)\in SL_2\times SL_2$ (the corresponding roots are $\alpha_4,\alpha_6$). We have $\bft\in Z_{G_0}$ if and only if  $t_4^2=t_2t_3t_5$ and $t_6^2=t_5 t_7$.
 
The fundamental semi-invariants and the corresponding characters are
\begin{align*}
&f_0=x_0&&\psi_0(\bft)=t_8&&
f_1=x_1&&\psi_1(\bft)=t_1^{-2}t_3\\
&f_2=x_2&&\psi_2(\bft)=t_7t_8^{-2}&&
f_3=\on{det}\left(\begin{matrix}v_1&v_2\end{matrix}\right)&&\psi_3(\bft)=t_1t_2^{-1}t_3^{-1}t_5\\
&f_4=\det X&&\psi_4(\bft)=t_2t_3t_5^{-2}t_7&&
f_5=v_2^t\,X\,v_3&&\psi_5(\bft)=t_2^{-1}t_3t_7^{-1}t_8\\
&f_6=v_1^t\,X\,v_3&&\psi_6(\bft)=t_1t_2t_3^{-1}t_7^{-1}t_8
\end{align*}
where $\bft\in Z_{G_0}$. 
The invariant is $f=f_0f_1^2f_2^2f_3^2f_4f_5f_6^2$.  We have $\prod_{i=1}^6\psi_i^{s_i}\in X^*(G_0)$ if and only if $s_i\in \bZ$. One checks that
\begin{subequations}
\beq
D_{f_i}\left(\prod_{j=3}^6f_j^{z_j}\right)=z_i(1+z_i+z_5+z_6)f_i^{-1}\prod_{j=3}^6f_j^{z_j},\ \,i=3,4
\eeq
\beq
D_{f_i}\left(\prod_{j=3}^6f_j^{z_j}\right)=z_i(1+z_4+z_5+z_6)(1+z_3+z_5+z_6)f_i^{-1}\prod_{j=3}^6f_j^{z_j},\ \,i=5,6\,.
\eeq
\end{subequations}
Hence $D_f(f^s)=cb(s)f^{s-1}$ where $c$ is a constant and 
$$b(s)=s^6(s-\tfrac{1}{2})^5(s+\tfrac{1}{4})(s-\tfrac{1}{4})(s+\tfrac{1}{5})(s-\tfrac{1}{5})(s-\tfrac{2}{5})(s-\tfrac{3}{5}).$$ 
So $b_{\exp}(z)=\Phi_1^9\Phi_2^5\Phi_4\Phi_5.$

\subsection{Type $(E_8,15_\rs)$}The Kac diagram is $$\xymatrix@R-1pc{{\substack{1\\\bullet\\\alpha_1}}\ar@{-}[r]&{\substack{0\\\bullet\\\alpha_3}}\ar@{-}[r]&{\substack{1\\\bullet\\\alpha_4}}\ar@{-}[r]&{\substack{0\\\bullet\\\alpha_5}}\ar@{-}[r]&{\substack{1\\\bullet\\\alpha_6}}\ar@{-}[r]&{\substack{0\\\bullet\\\alpha_7}}\ar@{-}[r]&{\substack{1\\\bullet\\\alpha_8}}\ar@{-}[r]&{\substack{1\\\circ\\-\alpha_0}}\\&&{\substack{0\\\bullet\\\alpha_2}}\ar@{-}[u]&&&&}.$$ 
As a $(G_0)_\der\cong SL_2\times SL_2\times SL_2\times SL_2$-representation
 $$\Lg_1\cong (\bC^2)^*\boxtimes\mathbf{1}\boxtimes\mathbf{1}\boxtimes\mathbf{1}\oplus \bC^2\boxtimes(\bC^2)^*\boxtimes\bC^2\boxtimes\mathbf{1}\oplus \mathbf{1}\boxtimes\bC^2\boxtimes\mathbf{1}\boxtimes(\bC^2)^*\oplus \mathbf{1}\boxtimes\mathbf{1}\boxtimes\mathbf{1}\boxtimes\bC^2\oplus\triv$$
 where 
\begin{align*}
&\bft\cdot(v_1,X,Y,Z,v_2,x_0)=(t_1^{-2}t_3v_1,t_2t_3t_4^{-2}t_5X,t_2t_3t_4^{-2}t_5Y,t_5t_6^{-2}t_7Z,t_7t_8^{-2}v_2,t_8x_0)\\
&(g_1,g_2,g_3,g_4)\cdot(v_1,X,Y,Z,v_2,x_0)=((g_1^t)^{-1}v_1,(g_1 Xg_2^{-1},g_1Yg_2^{-1})\,g_3^t,g_2Z g_4^{-1},g_4v_2,x_0)
\end{align*} 
for $\bft=\prod_{i=1}^8\check\alpha_i(t_i)\in Z_{G_0}$ and $(g_1,g_2,g_3,g_4)\in SL_2\times SL_2\times SL_2\times SL_2$ (with corresponding simple roots $\alpha_3,\alpha_5,\alpha_2,\alpha_7$). 

 We have $\bft\in Z_{G_0}$ if and only if  $t_2^2=t_4$, $t_3^2=t_1t_4$, $t_5^2=t_4t_6$ and $t_7^2=t_6t_8$.  
A related prehomogeneous vector space has been studied in~\cite[\S3.5, (5-17)]{U}. 
The fundamental semi-invariants and the corresponding characters are (here we have used {\em loc. cit.})
\begin{align*}
&f_0=\substack{\text{discriminant of the binary quadratic form}\\\det(uX+vY)\text{ in $u,v$}}&&\psi_0=t_1^{2}t_4^{-2}t_6^{2}&&f_1=x_0&&\psi_1=t_8\\
&f_2=\on{det}\left(\begin{matrix}XZv_2&YZv_2\end{matrix}\right)&&\psi_2=t_1t_8^{-2}&&
f_3=\on{det}\left(\begin{matrix}X^t\,v_1&Y^t\,v_1\end{matrix}\right)&&\psi_3=t_1^{-2}t_6\\
&f_4=\det Z&&\psi_4=t_4 t_6^{-2}t_8.
\end{align*}
 The invariant is $f=f_0f_1^2f_2^2f_3^2f_4^2$. By {\em loc. cit.}, we have $D_f(f^s)=cb(s)f^{s-1}$ where $c$ is a constant and 
\beqn
b(s)=cs^9(s-\tfrac{1}{2})^9(s+\tfrac{1}{6})^3(s-\tfrac{1}{6})^3(s-\tfrac{1}{3})(s-\tfrac{2}{3})(s+\tfrac{1}{5})(s-\tfrac{1}{5})(s-\tfrac{2}{5})(s-\tfrac{3}{5})\,.
\eeqn
 Hence
$b_{\on{exp}}(z)=\Phi_1^9\Phi_2^9\Phi_3\Phi_5\Phi_6^3.$

\subsection{Type $(G_2,3_\rs)$} 

The Kac diagram is $\xymatrix@C=1em{{\substack{0\\\bullet\\\alpha}}&{{\substack{1\\\bullet\\\beta}}}\ar@{-}[r]\ar@3{->}[l]&{{\substack{1\\\circ\\-\alpha_0}}}}$. Recall that $\langle\beta,\check\alpha\rangle=-3$. As a $(G_0)_{\on{der}}\cong SL_2$-representation $$\Lg_1\cong \mathbf{1}\oplus V(3\omega).$$ We identify $V(3\omega)$ as the space of degree 3 homogeneous polynomials $x_1 u^3+x_2u^2v+x_3uv^2+x_4v^3$ in two variables $u,v$, and write $\Lg_1=\oplus_{i=0}^4\bC x_i$. We have $\bft=\check\alpha(t_1)\check\beta(t_2)\in Z_{G_0}$ if and only if $t_1^2=t_2$, and 
\beqn
\bft:(x_0,x_1,x_2,x_3,x_4)\mapsto(t_2x_0,t_1^3t_2^{-2}x_1,t_1^3t_2^{-2}x_2,t_1^3t_2^{-2}x_3,t_1^3t_2^{-2}x_4),\ \bft\in Z_{G_0}.
\eeqn
The fundamental semi-invariants and the corresponding characters are (see~\cite[Proposition 6, p79]{SK})
\begin{align*}
&f_0=x_2^2x_3^2 -27x_1^2x_4^2- 4 x_1 x_3^3-4x_2^3x_4 + 18 x_1 x_2 x_3 x_4&&\psi_0(\bft)=t_2^{-2}\\
&f_1=x_0&&\psi_1(\bft)=t_2 \end{align*}
where $\bft\in Z_{G_0}$.
The invariant is $f=f_0f_1^2$.

 By~\cite{Sh}, we have $D_{f_0}f_0^s=s^2(s+\tfrac{1}{6})(s-\tfrac{1}{6})f_0^{s-1}$. Thus  we have $D_f(f^sf_1^{-s_1})=c b(s)f^{s-1}f_1^{-s_1}$, where $c$ is a constant and 
   \beqn
  b(s)= s^2\left(s-\tfrac{s_1}{2}\right)\left(s-\tfrac{s_1+1}{2}\right)\left(s+\tfrac{1}{6}\right)\left(s-\tfrac{1}{6}\right).
   \eeqn
 We have $\psi_1^{s_1}\in X^*(G_0)$ if and only if $s_1\in\bZ$. 
   When $s_1=0$, we have $$b(s)=s^3(s-\tfrac{1}{2})(s+\tfrac{1}{6})(s-\tfrac{1}{6})\text{ and  }b_{\exp}(z)=\Phi_1^3\Phi_2\Phi_6.$$
   We conclude that $R_{\chi_0,s}=\Phi_1^3\Phi_2\Phi_6$.

   We can assume that $\theta=\on{Int}n_w$ with $w=s_\beta s_\alpha s_\beta s_\alpha$. Then $W=\langle s\rangle\cong\mu_6$, where $s=s_\alpha s_\beta$, and $w:\check\alpha\mapsto-2\check\alpha-3\check\beta, \check\beta\mapsto\check\alpha+\check\beta$. The following lemma can be checked easily.
   
   \begin{lemma}
   We have $I:=\langle\gamma:=\check\alpha(\xi_3)\rangle\cong\mu_3$ and the action of $W$ on $\hat I$ has representatives $\chi_0,\chi_1$, where $\chi_1:\gamma\mapsto\zeta_3$. Moreover,
   \beqn
 W_{\chi_1}=W_{\chi_1}^0=\langle s^2\rangle\cong\mu_3.
\eeqn
   \end{lemma}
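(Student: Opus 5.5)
We need to prove the lemma for $(G_2,3_\rs)$: $I=\langle\check\alpha(\xi_3)\rangle\cong\mu_3$, the $W$-orbits on $\hat I$ are $\{\chi_0\}$ and the two nontrivial characters, and $W_{\chi_1}=W_{\chi_1}^0=\langle s^2\rangle$.

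The plan is to work with $I=T^\theta$ and the explicit action of $w=s_\beta s_\alpha s_\beta s_\alpha$ on coroots, $w:\check\alpha\mapsto-2\check\alpha-3\check\beta$, $\check\beta\mapsto\check\alpha+\check\beta$. Since $G_2$ is simply connected and adjoint, $T=\{\check\alpha(x)\check\beta(y)\}$ with $(x,y)\in(\bC^*)^2$ a free parametrisation. Because $\theta=\on{Int}(n_w)$, we have $\theta|_T=w$, so
\[
w(\check\alpha(x)\check\beta(y))=\check\alpha(x^{-2}y)\check\beta(x^{-3}y).
\]
The fixed point equations are therefore $x=x^{-2}y$ and $y=x^{-3}y$. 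The second gives $x^3=1$, and the first then gives $y=x^3=1$. Hence $I=\{\check\alpha(x)\mid x^3=1\}=\langle\check\alpha(\xi_3)\rangle\cong\mu_3$, as claimed.

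Next I determine the $W$-action on $I$. By \eqref{eqn-weyl}, $W=W_G^\theta$ is the centraliser of $w$ in $W_{G_2}\cong D_{12}$. Since $w=(s_\beta s_\alpha)^2$ is the square of a Coxeter element, this centraliser is the rotation subgroup $\langle s_\alpha s_\beta\rangle\cong\mu_6$, which agrees with $W=\langle s\rangle$. Note that $s^3=-1$ is the longest element, and it acts on $T$ by inversion. It therefore sends $\gamma$ to $\gamma^{-1}$ and $\chi_1$ to $\chi_1^{-1}=\chi_1^2$. Hence the nontrivial characters form a single $W$-orbit, and the representatives are $\chi_0,\chi_1$.

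It remains to identify the stabiliser. The element $s^2=w^{-1}$ (or $w$) acts on $I=T^\theta$ trivially, since $\theta|_T=w$. So $\langle s^2\rangle\subset W_{\chi_1}$. Since $s^3$ moves $\chi_1$, the stabiliser has index $2$ and $W_{\chi_1}=\langle s^2\rangle\cong\mu_3$. This is consistent with the orbit of size $2$. Finally, $s^2$ is a reflection of $\fa$, because it acts on the one-dimensional $\fa$ by a nontrivial root of unity. Hence $W_{\chi_1}$ is generated by reflections and $W_{\chi_1}^0=W_{\chi_1}$.

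The only step needing care is the fixed point computation, since it depends on having the correct formula for $w$ on coroots. I would double-check that formula directly from $s_\alpha(\check\beta)=\check\beta+\check\alpha$ and $s_\beta(\check\alpha)=\check\alpha+3\check\beta$. Everything else is formal.
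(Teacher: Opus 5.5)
Your proof is correct and follows the same route as the paper, which supplies $w\colon\check\alpha\mapsto-2\check\alpha-3\check\beta$, $\check\beta\mapsto\check\alpha+\check\beta$ and $W=\langle s\rangle\cong\mu_6$ and then says the lemma ``can be checked easily''; your fixed-point computation $I=T^{w}=\langle\check\alpha(\xi_3)\rangle$ is exactly that check. The remaining claims also hold as you argue: $s^2=w^{-1}$ acts trivially on $T^w$, $s^3=-1$ inverts $\gamma$, and every nontrivial element of $W$ acts on the one-dimensional $\fa$ as a reflection.
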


In what follows we show that $\cH_{W_{\chi_1}}\cong\cH_{\mu_3,\Phi_1^3}$. We do so by showing that there are 5 cuspidal character sheaves $\on{IC}(\Lg_1^{rs},\cE_i)$, where $\on{rank}\cE_i=1$, $i=1,2,3,4$, and $\on{rank}\cE_5=2$.

Let us first determine the set of simple $G_0$-equivariant perverse sheaves on  $\cN_{-1}=\Lg_{-1}\cap\cN$. For this we will use another realisation of $\theta:G\to G$ as follows
$$\theta=\on{Int}(\check\alpha(\zeta_3)\check\beta(\zeta_3^2))$$
where $\check\alpha,\check\beta\in X_*(T_{\rf})$ are the simple co-roots with respect to a  fundamental $\theta$-stable maximal torus $T_{\rf}$ (see~\cite[\S2.1]{VX2}). Let $B\supset T_\rf$ be the standard Borel subgroup of $G$.
We have
\begin{align*}
&\Lg_0=\Lt_{\rf}\oplus \Lg_\alpha\oplus\Lg_{-\alpha},\qquad
 \Lg_1=\Lg_\beta\oplus\Lg_{\alpha+\beta}\oplus \Lg_{2\alpha+\beta}\oplus \Lg_{3\alpha+\beta}\oplus \Lg_{-3\alpha-2\beta}
\\
&\Lg_{-1}=\Lg_{-\beta}\oplus\Lg_{-\alpha-\beta}\oplus \Lg_{-2\alpha-\beta}\oplus \Lg_{-3\alpha-\beta}\oplus \Lg_{3\alpha+2\beta}.
\end{align*}
We choose $x_{\pm\alpha}:\bG_a\to U_{\pm\alpha}\subset G$, $h_\alpha, h_\beta:\bG_m\to T_{\rf}$, and $n_\alpha\in N_G(T_{\rf})$ etc as in~\cite{C2} (see also \cite[\S2.1]{X1}). We have
\begin{align*}
G_0=&\{h_\alpha(t_1)h_\beta(t_2)x_\alpha(a)\mid t_1,t_2\in\bC^*,a\in\bC\}\\
&\sqcup\{x_\alpha(a_2)n_\alpha h_\alpha(t_1)h_\beta(t_2)x_\alpha(a_1)\mid t_1,t_2\in\bC^*,a_1,a_2\in\bC\}.
\end{align*}
Let $X_r=dx_r(1)\in\Lg_r$ for $r\in R$. 
We have
\begin{subequations}\label{eqn-adjoint}
{\tiny\beq
\begin{aligned}
 x_\alpha(a):&\ X_\beta\mapsto X_\beta+aX_{\alpha+\beta}+a^2X_{2\alpha+\beta}+a^3X_{3\alpha+\beta}&&X_{2\alpha+\beta}\mapsto X_{2\alpha+\beta}+3aX_{3\alpha+\beta}&&X_{3\alpha+\beta}\mapsto X_{3\alpha+\beta}\\
&X_{\alpha+\beta}\mapsto X_{\alpha+\beta}+2aX_{2\alpha+\beta}+3a^2X_{3\alpha+\beta}&
&X_{3\alpha+2\beta}\mapsto X_{3\alpha+2\beta}&&X_{-\beta}\mapsto X_{-\beta}\\
&X_{-2\alpha-\beta}\mapsto X_{-2\alpha-\beta}-2aX_{-\alpha-\beta}+3a^2X_{-\beta}&&X_{-\alpha-\beta}\mapsto X_{-\alpha-\beta}-3aX_{-\beta}\\
&X_{-3\alpha-\beta}\mapsto X_{-3\alpha-\beta}-aX_{-2\alpha-\beta}+a^2X_{-\alpha-\beta}-a^3X_{-\beta}&&X_{-3\alpha-2\beta}\mapsto X_{-3\alpha-2\beta}
\end{aligned}
\eeq}
{\tiny\beq
\begin{aligned}
n_\alpha:\ &X_\beta\mapsto X_{3\alpha+\beta}&&X_{3\alpha+\beta}\mapsto -X_\beta&&X_{\alpha+\beta}\mapsto -X_{2\alpha+\beta}&&X_{2\alpha+\beta}\mapsto X_{\alpha+\beta}&&
X_{-\beta}\mapsto X_{-3\alpha-\beta}\\&X_{-3\alpha-\beta}\mapsto -X_{-\beta}&&X_{-\alpha-\beta}\mapsto -X_{-2\alpha-\beta}&&X_{-2\alpha-\beta}\mapsto X_{-\alpha-\beta}&
&X_{3\alpha+2\beta}\mapsto X_{3\alpha+2\beta}&&X_{-3\alpha-2\beta}\mapsto X_{-3\alpha-2\beta}\end{aligned}
\eeq}
{\tiny\beq
\begin{aligned}
h_\alpha(t):\ &X_\beta\mapsto t^{-3}X_\beta&&X_{\alpha+\beta}\mapsto t^{-1}X_{\alpha+\beta}&&X_{2\alpha+\beta}\mapsto tX_{2\alpha+\beta}&&X_{3\alpha+\beta}\mapsto t^{3}X_{3\alpha+\beta}&&X_{3\alpha+2\beta}\mapsto X_{3\alpha+2\beta}\\
h_\beta(t):\ &X_\beta\mapsto t^{2}X_\beta&&X_{\alpha+\beta}\mapsto tX_{\alpha+\beta}&&X_{2\alpha+\beta}\mapsto X_{2\alpha+\beta}&&X_{3\alpha+\beta}\mapsto t^{-1}X_{3\alpha+\beta}&&X_{3\alpha+2\beta}\mapsto t X_{3\alpha+2\beta}\end{aligned}
\eeq}
\end{subequations}

Let us write $y=y_1 X_{-\beta}+y_2 X_{-\alpha-\beta}+y_3 X_{-2\alpha-\beta}+y_4 X_{-3\alpha-\beta}+y_0X_{3\alpha+2\beta}\in \Lg_{-1}$ so that $\Lg_{-1}=\{(y_0,y_1,\ldots,y_4)\mid y_i\in\bC\}$.
Similarly  let $x=x_1 X_{\beta}+x_2 X_{\alpha+\beta}+x_3 X_{2\alpha+\beta}+x_4 X_{3\alpha+\beta}+x_0X_{-3\alpha-2\beta}\in \Lg_{1}$ so that $\Lg_{1}=\{(x_0,x_1,\ldots,x_4)\mid x_i\in\bC\}$.
One checks that
\begin{align*}
\bC[\Lg_{-1}]^{G_0}&=\bC[y_0^2h(y_1,y_2,y_3,y_4)]
\qquad
\bC[\Lg_{1}]^{G_0}=\bC[x_0^2h(x_1,x_2,x_3,x_4)]
\end{align*}
where 
\begin{align*}
h(y_1,y_2,y_3,y_4)&=y_1^2 y_4^2-3 y_2^2 y_3^2 + 4 y_1 y_3^3 + 4 y_2^3 y_4 - 6 y_1 y_2 y_3 y_4\\&=(y_1y_4-y_2y_3)^2+4(y_1y_3-y_2^2)(y_3^2-y_2y_4)\,.
\end{align*}
So we have 
\begin{align*}
\cN_{-1}=&\{(y_0,y_1,\ldots,y_4)\in\Lg_1\mid h(y_1,y_2,y_3,y_4)=0\}\cup\{(y_0,y_1,\ldots,y_4)\in\Lg_1\mid y_0=0\}.
\end{align*}
\begin{lemma}Let $(G,\theta)=(G_2,3_\rs)$. There are $7$ $G_0$-orbits in $\cN_{-1}$ as follows:

\FloatBarrier
\begin{longtable}[c]{p{4cm}p{1.5cm}p{1.5cm}|p{4cm}p{1.6cm}p{1.6cm}}
\hline
Representative $z$&$\on{dim}\cO_z$&$A_{G_0}(z)$&Representative $x$&$\on{dim}\cO_z$&$A_{G_0}(z)$\\
\hline
$z_1:=X_{-\alpha-\beta}+X_{-2\alpha-\beta}$&$4$&$S_3$&$z_5:=X_{-\beta}$&$2$&$1$\\
$z_2:=X_{-\alpha-\beta}+X_{3\alpha+2\beta}$&$4$&$1$&$z_6:=X_{3\alpha+2\beta}$&$1$&$1$\\
$z_3:=X_{-\beta}+X_{3\alpha+2\beta}$&$3$&$\mu_3$&$0$&$0$&$1$\\
$z_4:=X_{-\alpha-\beta}$&$3$&$1$\\
\hline
\end{longtable}
where $\cO_z=G_0.z$ and $A_{G_0}(z)=Z_{G_0}(z)/Z_{G_0}(z)^0$.
\end{lemma}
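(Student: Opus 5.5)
The plan is to classify $G_0$-orbits on $\cN_{-1}$ directly from the explicit coordinates. We have $\Lg_{-1}=\bC X_{3\alpha+2\beta}\oplus V$, where $V=\Span\{X_{-\beta},\ldots,X_{-3\alpha-\beta}\}$ is (up to the twist by $Z_{G_0}$) the binary cubic representation of $(G_0)_\der\cong SL_2$. Here $h$ is, up to normalisation, the discriminant of the cubic $y_1u^3+\cdots$. The torus $h_\alpha(t_1)h_\beta(t_2)$ scales $y_0$ by $t_2$ and rescales $V$ by a character independent of $SL_2$.

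\textbf{Step 1: reduce to cubic orbits.} The set $\cN_{-1}$ is the union of $\{y_0=0\}$ and $\{h=0\}$. So I first list the $GL_2$-type orbits on binary cubics, using the action of $G_0$ on $V$ through $SL_2$ times the central torus.
\begin{itemize}
\item Three distinct roots: the open orbit, of dimension 4.
\item A double root (shape $u^2v$): dimension 3.
\item A triple root ($u^3$): dimension 2.
\item The zero cubic.
\end{itemize}
Using \eqref{eqn-adjoint}, $X_{-\alpha-\beta}+X_{-2\alpha-\beta}$ has distinct roots (three of them, counting $\infty$), $X_{-\alpha-\beta}$ has a double root, and $X_{-\beta}$ has a triple root.

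\textbf{Step 2: the two strata.} On $\{y_0=0\}$ we get $z_1,z_4,z_5,0$. On $\{h=0,\ y_0\neq0\}$ the cubic is forced to be degenerate, i.e.\ to have a double root, a triple root, or to vanish. This gives $z_2$, $z_3$, $z_6$.

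I must check that $y_0$ can be normalised to $1$ while keeping the cubic's normal form. This uses the second central coordinate: with $t_1^2=t_2$ for central elements, and more generally the full torus $h_\alpha(t_1)h_\beta(t_2)$, the weights of $X_{3\alpha+2\beta}$ and of the weight vector in question are independent. I verify this for each of the three degenerate shapes via the stabiliser torus computation. The case $z_3$ ($X_{-\beta}$ with $y_0\ne0$) is where the weights become dependent, and this produces a finite stabiliser component.

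\textbf{Step 3: dimensions and component groups.} For each representative I compute $Z_{G_0}(z)$ from \eqref{eqn-adjoint}, using the Bruhat decomposition of $G_0$ given before the lemma. I solve for $h_\alpha(t_1)h_\beta(t_2)x_\alpha(a)$, and for elements of the form $x_\alpha(a_2)n_\alpha h x_\alpha(a_1)$, fixing $z$. Then
\[
\dim\cO_z=4-\dim Z_{G_0}(z),
\]
since $\dim G_0=4$.
\begin{itemize}
\item For $z_1$, the stabiliser of a cubic with three distinct roots in $SL_2\times$torus is finite and permutes the roots, giving $S_3$. Compatibility with the torus scaling forces the relevant scalars to be roots of unity absorbed into $SL_2$. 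I will need to check the $\mu_3$ centre does not contribute extra components.
\item For $z_3$, the stabiliser is $\{h_\alpha(t_1)h_\beta(t_2)x_\alpha(a) : t_1^{-3}t_2^2=1,\ t_2=1\}\cdot U$, whose component group is $\mu_3$.
\item The remaining representatives have connected stabilisers: a torus times unipotent, checked case by case.
\end{itemize}

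\textbf{Main obstacle.} The main obstacle is Step 3 for $z_1$ and $z_2$: correctly tracking elements in the $n_\alpha$-cell and the finite part of the torus, so as to get exactly $S_3$ and $1$ respectively. For $z_2$, the presence of $y_0$ kills the $S_3$ symmetry, because the torus must fix $y_0$; I will verify that the residual finite group acts trivially on the component set. Finally, exhaustion follows because the listed orbits cover every shape of cubic times $y_0\in\{0,\neq0\}$ subject to $y_0^2h=0$.
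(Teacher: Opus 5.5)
Your route is the paper's own: it also starts from the four orbits on binary cubics, splits $\cN_{-1}=\{y_0=0\}\cup\{h=0\}$, and reads the stabilisers off \eqref{eqn-adjoint}; for instance $Z_{G_0}(z_1)=\langle h_\beta(-1)x_\alpha(1),\,n_\alpha h_\beta(-1)\rangle\cong S_3$. Your orbit list and final data are correct. However, three of your justifications are wrong and should be replaced, although none of them changes the table.

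First, for $z_3$ the weights are not dependent. Under $h_\alpha(t_1)h_\beta(t_2)$, $X_{-\beta}$ has weight $t_1^3t_2^{-2}$ and $X_{3\alpha+2\beta}$ has weight $t_2$; these are independent but span an index-$3$ sublattice. Independence is what lets you normalise $y_0$: the stabiliser $\{h_\alpha(s^2)h_\beta(s^3)\}\cdot U_\alpha$ of $X_{-\beta}$ rescales $y_0$ by $s^3$. The index $3$ is what gives $Z_{G_0}(z_3)=\{h_\alpha(t)x_\alpha(a)\mid t^3=1\}$. If the weights really were dependent, $y_0$ could not be normalised on that stratum.

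Second, there is no $\mu_3$ centre to check. $Z(G_0)=\{h_\alpha(t)h_\beta(t^2)\}$ is a connected torus acting on the cubic part by $t^{-1}$. So an element of $Z_{G_0}(z_1)$ fixing all three roots is central and hence trivial, which is why you get exactly $S_3$.

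Third, $z_2$ has nothing to do with $S_3$. Its cubic part $X_{-\alpha-\beta}$ has a double root, and by \eqref{eqn-adjoint} its stabiliser in $G_0$ is the connected torus $\{h_\alpha(t)h_\beta(t)\}$; neither $U_\alpha$ nor the $n_\alpha$-cell contributes. This torus rescales $X_{3\alpha+2\beta}$ by $t$, so $Z_{G_0}(z_2)=1$ directly, with no residual finite group to analyse.
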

\begin{proof}
It is well known that the action of $G_0$ on $V(3\omega)$ (the space of binary cubic forms) has 4 orbits. One checks that
\begin{align*}
\cO_{z_1}=&\{(y_0,y_1,\ldots,y_4)\in\Lg_1\mid y_0=0,h(y_1,y_2,y_3,y_4)\neq 0\}\\
\cO_{z_2}=&\{(y_0,y_1,\ldots,y_4)\in\Lg_1\mid y_0\neq 0,h(y_1,y_2,y_3,y_4)=0, \\&(y_1y_4-y_2y_3,y_1y_3-y_2^2,y_3^2-y_2y_4)\neq(0,0,0)\\
\cO_{z_3}=&\{(y_0,y_1,\ldots,y_4)\in\Lg_1\mid y_0\neq 0,(y_1y_4-y_2y_3,y_1y_3-y_2^2,y_3^2-y_2y_4)=(0,0,0),\\&(y_1,y_2,y_3,y_4)\neq(0,0,0,0)\}\\
\cO_{z_4}=&\{(y_0,y_1,\ldots,y_4)\in\Lg_1\mid y_0=0,h(y_1,y_2,y_3,y_4)= 0, \\&(y_1y_4-y_2y_3,y_1y_3-y_2^2,y_3^2-y_2y_4)\neq(0,0,0)\}\\
\cO_{z_5}=&\{(y_0,y_1,\ldots,y_4)\in\Lg_1\mid y_0= 0,(y_1y_4-y_2y_3,y_1y_3-y_2^2,y_3^2-y_2y_4)=(0,0,0),\\&(y_1,y_2,y_3,y_4)\neq(0,0,0,0)\},\qquad
\cO_{z_6}=\{(y_0,0,0,0,0)\in\Lg_1\mid y_0\neq 0\}.
\end{align*}

Using~\eqref{eqn-adjoint}, one checks that the component groups are as desired. In particular, we have
\begin{align}\label{eqn-centraliser}
Z_{G_0}(z_1)=\langle\gamma_1:=h_\beta(-1)x_\alpha(1),\gamma_2:=n_\alpha h_\beta(-1)\rangle\cong S_3
\end{align}
where $\gamma_1^2=\gamma_2^2=1$ and $\gamma_1\gamma_2\gamma_1=\gamma_2\gamma_1\gamma_2$.
\end{proof}
\begin{remark}
The above calculation can be done using the pre-homogeneous vector space approach. Here we rewrite the representatives using root vectors so that it is easier to use when we perform parabolic induction later.
\end{remark}
Let $\cE_i$, $i=1,2,3$ denote the $G_0$-equivariant local system on $\Lg_1^{rs}$ given by the non-trivial irreducible representations of $\cH_{W}=\cH_{\mu_6,\Phi_1^3\Phi_2\Phi_6}$.
\begin{proposition}\label{prop-g2-3s}
\hfill
\begin{enumerate}
\item The following 5 simple $G_0$-equivarant perverse sheaves on $\cN_{-1}$ are cuspidal:
\beqn
\on{IC}(\cO_{z_1},\cL_{\on{sgn}}),\ \on{IC}(\cO_{z_3},\cL_{i}), i=1,2,\ \on{IC}(\cO_{z_5},\bC),\ \bC_{\{0\}}
\eeqn
where $\cL_{\on{sgn}}$ denotes the $G_0$-equivariant local system on $\cO_{z_1}$ given by the sign representation of $S_3$, and $\cL_{i}$ are the $G_0$-equivariant local system on $\cO_{z_3}$ given by the nontrivial representations of $\mu_3$.
\item The $6$ remaining simple $G_0$-equivarant perverse sheaves on $\cN_{-1}$ are bi-orbital.
\item
We have
\begin{align*}
&\{\fF\on{IC}(\cO_{z_1},\cL_{\on{sgn}}),\,\fF\on{IC}(\cO_{z_3},\cL_{i}), i=1,2\}
=\{\on{IC}(\Lg_1^{rs},\cE_i), i=1,2,3\}\\
&
\fF\on{IC}(\cO_{z_5},\bC)=\on{IC}(\Lg_1^{rs},\cE_4)\text{ where $\on{rank}\cE_4=2$.}
\end{align*}

\end{enumerate}
\end{proposition}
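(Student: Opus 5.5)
The plan is to classify all simple $G_0$-equivariant perverse sheaves on $\cN_{-1}$ and split them into those obtained by parabolic induction and the rest. By the preceding lemma there are $3+1+3+1+1+1+1=11$ pairs (orbit, irreducible local system). These come from $S_3$ on $\cO_{z_1}$, $\mu_3$ on $\cO_{z_3}$, and trivial component groups elsewhere. So it suffices to show that exactly the $6$ sheaves not listed in (1) arise as direct summands of parabolic inductions, and that each of these is bi-orbital.

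\textbf{Step 1: list the inductions.} Using the realisation $\theta=\on{Int}(\check\alpha(\zeta_3)\check\beta(\zeta_3^2))$, I would write down the $\theta$-stable proper parabolics $P$ containing $T_\rf$ and their Levi factors $L$:
\begin{itemize}
\item the torus $T_\rf$, with $\Lt_{-1}=0$;
\item the Levi of $\pm\alpha$, where $\alpha$ has degree $0$, so again $\mathfrak l_{-1}=0$;
\item the Levis of the roots of nonzero degree, namely $\pm\beta$ and $\pm(3\alpha+2\beta)$ (up to $W$-conjugacy), for which $\mathfrak l_{-1}$ is a line;
\item possibly $\pm(\alpha+\beta)$ and related roots.
\end{itemize}
For each such $L$ and each simple $L_0$-equivariant sheaf on $\mathfrak l_{-1}\cap\cN$ (the skyscraper at $0$, and the constant sheaf on a line), I would compute $\on{Ind}=\pi_!$ along $G_0\times^{P_0}(\mathfrak l_{-1}\cap\cN+\mathfrak n_{-1})\to\cN_{-1}$. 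The explicit formulas \eqref{eqn-adjoint} make $\mathfrak n_{-1}$ and the images concrete. Their images are the closures of $\cO_{z_2},\cO_{z_4},\cO_{z_6},\cO_{z_1}$, and so on. Using the decomposition theorem, fibre computations at $z_i$, and the action of the component group (for $\cO_{z_1}$ via the generators $\gamma_1,\gamma_2$ in \eqref{eqn-centraliser}), I would show that the summands exhausted are:
\begin{itemize}
\item $\on{IC}(\cO_{z_1},\bC)$ and $\on{IC}(\cO_{z_1},\cL_{\text{refl}})$;
\item $\on{IC}(\cO_{z_2},\bC)$;
\item $\on{IC}(\cO_{z_3},\bC)$;
\item $\on{IC}(\cO_{z_4},\bC)$;
\item $\on{IC}(\cO_{z_6},\bC)$.
\end{itemize}
These are exactly the six sheaves not listed in (1). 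The remaining five never appear, which gives (1), granting that cuspidality means not being a summand of any such induction.

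\textbf{Step 2: bi-orbitality.} Fourier transform commutes with parabolic induction. Inducing from $L$ with $\mathfrak l_{-1}=0$ or a line, the Fourier transforms on $\mathfrak l_1$ are again orbital, and the induced sheaves on $\Lg_1$ are supported on $G_0\cdot(\mathfrak l_1\cap\cN+\mathfrak n_1)\subset\cN_1$. This gives (2).

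\textbf{Step 3: the Fourier transforms of the cuspidals.} By \cite{LTVX}, the Fourier transforms of cuspidals are the full-support character sheaves, namely the $\on{IC}(\Lg_1^{rs},-)$ arising from $\cP_{\chi_0}^\dag$ and $\cP_{\chi_1}^\dag$. First, $\fF\bC_{\{0\}}$ is the constant sheaf on $\Lg_1$, which is the trivial representation of $\cH_W$. Second, the remaining irreducible summands are:
\begin{itemize}
\item $\cE_1,\cE_2,\cE_3$ (rank $1$), the non-trivial representations of $\cH_{\mu_6,\Phi_1^3\Phi_2\Phi_6}$ for which $I$ acts trivially;
\item summands on which $I$ acts by $\chi_1$ or $\chi_1^2$, of rank at least $2$, since these are induced from $W_{\chi_1}=\mu_3$ of index $2$.
\end{itemize}
To place $\fF\on{IC}(\cO_{z_5},\bC)$, I would compute its generic rank as the microlocal multiplicity: the number of critical points of a generic linear form $\xi\in\Lg_1$ restricted to $\cO_{z_5}$. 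This orbit is the cone of cubes $\ell^3$ of linear forms, so $\xi(\ell^3)$ is a generic binary cubic restricted to the affine cone, which yields $2$ critical lines. Hence its rank is $2$, and it must be the $\chi_1$-piece $\cE_4$. The three sheaves $\on{IC}(\cO_{z_1},\cL_{\on{sgn}})$ and $\on{IC}(\cO_{z_3},\cL_i)$ then have transforms of rank $1$ by the same count, or simply by elimination, so they match $\{\cE_1,\cE_2,\cE_3\}$. As a by-product, the $\chi_1$-part has only one irreducible summand of rank $2$ besides those accounted for, which is the input for $\cH_{W_{\chi_1}}$.

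\textbf{Main obstacle.} I expect Step 1 to be the hardest: determining precisely which local systems on $\cO_{z_1}$ and $\cO_{z_3}$ occur in the inductions. This requires the monodromy of $\pi_!$ over these orbits. I would attack it by computing the fibres of $\pi$ over $z_1$ and $z_3$ as finite sets (or $\mathbb P^1$'s) with their $A_{G_0}(z)$-permutation action. For $z_1$, the Springer-type fibre from $T_\rf$ is $3$ points permuted by $S_3$, giving $\bC\oplus\text{refl}$ and excluding $\on{sgn}$. For $z_3$, one should get only trivial $\mu_3$-invariants.
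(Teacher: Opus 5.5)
\textbf{Gap: the rank of $\fF\on{IC}(\cO_{z_5},\bC)$ in Step 3.} The count you describe gives $0$, not $2$. Here $\cO_{z_5}=\{\ell^3\mid \ell\neq 0\}$ is the punctured cone over the twisted cubic, and $\xi(\ell^3)=c_\xi(\ell)$ is a binary cubic in $\ell$. By Euler's identity, a critical point of $\xi|_{\cO_{z_5}}$ is a multiple root of $c_\xi$, so a generic $\xi$ (one off the discriminant, i.e.\ off the dual variety) has no critical points at all.

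This step carries the weight of the second line of (3). Counting alone only tells you that exactly one of the four non-trivial cuspidal sheaves goes to the $\chi_1$-part; which one it is rests entirely on this rank.

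The correct microlocal quantity is the multiplicity of $T^*_{\{0\}}\Lg_{-1}$ in the characteristic cycle. Up to sign, this is $\chi$ of the stalk at $0$ minus $\chi$ of the complex link. Since $\overline{\cO_{z_5}}\cong\bC^2/\mu_3$ is rationally smooth, $\on{IC}(\cO_{z_5},\bC)=\bC_{\overline{\cO_{z_5}}}[2]$. The link is $\{x\in\overline{\cO_{z_5}}\mid \xi(x)=1\}\cong\mathbb P^1\setminus\{\text{3 roots of }c_\xi\}$, which has $\chi=-1$. So the multiplicity is $1-(-1)=2$.

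The paper avoids this by using the non-inductive map $\pi\colon G_0\times^{B_0}\Lg_{-\beta}\to\overline{\cO_{z_5}}$. It has $\pi_*\bC[2]\cong\on{IC}(\cO_{z_5},\bC)\oplus\bC_{\{0\}}$, and its Fourier dual $G_0\times^{B_0}\Lg_{-\beta}^{\perp}\to\Lg_1$ is generically $3{:}1$. Removing $\fF\bC_{\{0\}}=\on{IC}(\Lg_1^{rs},\bC)$ leaves a rank-$2$ local system. For the other three sheaves, rely on your elimination argument, not on ``the same count''.

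\textbf{Comparison of routes for (1).} Apart from this, your argument is sound, and Step 1 is a genuinely different route to (1).

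The paper never classifies all induced summands. It only exhibits each of the six sheaves in some $(\pi_i)_*\bC$ with $E_i^\perp\subset\cN_1$, which gives non-cuspidality and bi-orbitality at once, as in your Step 2. It then gets cuspidality of the five from \cite{LTVX} (cuspidal $=$ full support, and constituents of $\cP_\chi^\dag$ are cuspidal) together with the elimination.

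Your route proves (1) directly from the definition. The price is that you need the \emph{complete} decomposition of every induced complex, not just generic monodromy over $z_1,z_3$. In particular you must show $\bC_{\{0\}}$ and $\on{IC}(\cO_{z_5},\bC)$ never occur. This does hold:
\begin{itemize}
\item Work up to $G_0$-conjugacy, i.e.\ $\langle s_\alpha\rangle$, not $W$. The Levis of $\beta$, $\alpha+\beta$, $3\alpha+2\beta$ are then three distinct classes, each with two parabolics.
\item The induced spaces are $E_1,\dots,E_5$ and $V(3\omega)$.
\item The maps from $E_1$ and $E_4$ are small, with fibre $\mathbb P^1$ only over $0$.
\item The maps from $E_2,E_3,E_5$ are products with the $G_0$-stable line $\Lg_{3\alpha+2\beta}$, so no summand is supported on $\{0\}$ or $\overline{\cO_{z_5}}$.
\item $V(3\omega)$ gives only the constant sheaf.
\end{itemize}
Note that $\Lg_{-\beta}$ by itself is never a $\mathfrak p_{-1}$. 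For $L_\beta$ the two parabolics give $\Lg_{-\beta}\oplus\Lg_{3\alpha+2\beta}$ and $V(3\omega)$. This is exactly why $\bC_{\{0\}}$ and $\on{IC}(\cO_{z_5},\bC)$ escape induction.
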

\begin{corollary}
We have $\cH_{\chi_1}\cong\cH_{\mu_3,\Phi_1^3}$.
\end{corollary}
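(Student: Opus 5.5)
The plan is to count cuspidal character sheaves, using Proposition~\ref{prop-g2-3s}, and compare the count with the constituents of $\cP_{\chi_0}^\dag$ and $\cP_{\chi_1}^\dag$. Since $W_{\chi_1}=W_{\chi_1}^0=\langle s^2\rangle\cong\mu_3$, the algebra $\cH_{W_{\chi_1}}\cong\bC[z]/(R_{\chi_1}(z))$ is determined by one monic cubic $R_{\chi_1}$, with $z$ the image of the braid generator lifting $s^2$. We must show that $R_{\chi_1}=(z-1)^3$.

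\emph{Step 1: list the cuspidal sheaves.} By \cite{LTVX}, the cuspidal character sheaves are exactly the full-support ones, i.e.\ the simple constituents of $\cP_\chi^\dag=\on{IC}(\Lg_1^{rs},\cM_\chi)$ for $\chi\in\{\chi_0,\chi_1\}$. Fourier transform preserves cuspidality, so part (1) of Proposition~\ref{prop-g2-3s} gives exactly five cuspidal character sheaves: $\fF\bC_{\{0\}}$ and the four sheaves in part (3). Here $\fF\bC_{\{0\}}$ is the constant sheaf, i.e.\ the IC extension of the trivial local system.

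\emph{Step 2: identify the $\chi_0$ part.} From $\cH_{W}=\cH_{\mu_6,\Phi_1^3\Phi_2\Phi_6}$, the distinct roots of $R_{\chi_0}$ are $1,-1,\zeta_6,\zeta_6^{-1}$. So $\cP_{\chi_0}^\dag$ has exactly four pairwise non-isomorphic simple constituents, all of rank one: the trivial one and $\cE_1,\cE_2,\cE_3$. These exhaust the sheaves $\fF\bC_{\{0\}}$ and the three rank-one sheaves in part (3). The $\chi_1$ part therefore contributes only the rank-$2$ sheaf $\on{IC}(\Lg_1^{rs},\cE_4)$. This is consistent with the structure of $\cM_{\chi_1}$. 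It is induced from $\widetilde B_W^{\chi_1,0}$, which has index $[W:W_{\chi_1}]=2$, so for each root $c$ of $R_{\chi_1}$ the corresponding irreducible $V_{\rho,\chi_1}$ has rank $2$. Moreover $I$ acts on it through the $W$-orbit $\{\chi_1,\chi_1^{-1}\}$, so it is distinct from every constituent coming from $\chi_0$. Since distinct roots of $R_{\chi_1}$ give non-isomorphic $V_{\rho,\chi_1}$, and only one rank-$2$ cuspidal sheaf is available, $R_{\chi_1}$ has a single root: $R_{\chi_1}=(z-c)^3$.

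\emph{Step 3: show $c=1$.} I expect this to be the main obstacle. The first approach I would try is to use the construction in \cite[\S2.7]{GVX}: the eigenvalue $1$ of the Hecke generator corresponds to the part of the nearby cycle monodromy that survives specialisation (the analogue of the factor $z-1$ in Lemma~\ref{lem-rho}), so $1$ is always a root of $R_\chi$. If that cannot be extracted cleanly, the fallback is a direct computation of the local monodromy of $\cE_4=\fF\on{IC}(\cO_{z_5},\bC)$ around the discriminant. Concretely, compute the stalk of $\fF\on{IC}(\cO_{z_5},\bC)$ along a generic line in $\Lg_1$ using the explicit root-vector description of $\cO_{z_5}$ and formulas~\eqref{eqn-adjoint}, and show that the lift of $s^2$ acts unipotently. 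Either route gives $c=1$, hence $\cH_{\chi_1}\cong\cH_{\mu_3,\Phi_1^3}$.
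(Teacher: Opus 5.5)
Your Steps 1 and 2 reproduce the paper's argument. By Proposition~\ref{prop-g2-3s}, $\cP_{\chi_1}^\dag$ contributes exactly one cuspidal character sheaf, namely $\on{IC}(\Lg_1^{rs},\cE_4)$. So $\cH_{W_{\chi_1}}=\bC[z]/(R_{\chi_1}(z))$ has a unique simple module and $R_{\chi_1}=(z-c)^3$.

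The paper then says this ``forces'' the relation $\Phi_1^3=0$. You are right that uniqueness alone only gives $(z-c)^3$, but your Step 3 does not supply the missing equality $c=1$.

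Route (a) invokes ``$1$ is always a root of $R_\chi$'', which is not established anywhere in the paper.
\begin{itemize}
\item In the rank-one cases, the factor $z-1$ comes from the factor $s$ of the $b$-function contributed by $f_0$. That is precisely the method unavailable for $\chi_1$: since $\chi_1$ is not $W$-invariant, it is not the restriction of a character of $G_0$.
\item Lemma~\ref{lem-rho}, whose content is that certain linear factors $\bar R_{\chi,s}$ equal $z-1$, is proved by inspecting the case-by-case answers, not from a general principle.
\end{itemize}
Route (b) is only announced, and the stalk computation along a line that you describe is heavier than needed.

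The step follows from what the paper already proves about $\cE_4$. Let $\sigma$ generate $B_W\cong\bZ$ with $p(\sigma)=s$, so $z=\sigma^2$. Let $\rho_c$ be the one-dimensional $\cH_{W_{\chi_1}}$-module with $z\mapsto c$.

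\begin{enumerate}
\item Since the twists $\bC_\rho,\bC_\tau$ are trivial, $\cE_4$ is given by $V_{\rho_c,\chi_1}$. On its $\chi_1$-isotypic line, $\sigma^2$ acts by $c$.
\item In the proof of Proposition~\ref{prop-g2-3s}, $\cE_4$ is the rank-$2$ summand $\cE$ of $(\check\pi_6)_*\bC|_{\Lg_1^{rs}}$, which is a three-sheeted cover. So its monodromy factors through the permutation action of $\widetilde B_W$ on a three-point fibre, i.e.\ through $S_3$.
\item The generator $\gamma$ of $I$ acts on $\cE_4$ by $\chi_1\oplus\chi_1^{-1}$, so it maps to a $3$-cycle.
\item In $B_W\ltimes I$ we have $\sigma\gamma\sigma^{-1}=s(\gamma)=\gamma^{-1}$, because $s\notin W_{\chi_1}$. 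So $\sigma$ maps to an element of $S_3$ inverting a $3$-cycle, i.e.\ a transposition.
\item Hence $\sigma^2$ acts trivially on $\cE_4$, and $c=1$.
\end{enumerate}

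This is independent of the chosen splitting, since $(\sigma\gamma^k)^2=\sigma^2$. With this inserted, your argument is complete.
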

\begin{proof}
By Proposition~\ref{prop-g2-3s}, we obtain a unique cuspidal character sheaf from $\cP_{\chi_1}^\dag$. This implies that $\cH_{\chi_1}$ has a unique irreducible representation, which forces the Hecke relation to be $\Phi_1^3=0$.
\end{proof}
To prove Proposition~\ref{prop-g2-3s}, consider the following $B_0:=B\cap G_0$-stable subspaces of $\Lg_{-1}$:
\begin{align*}
&E_1=\Lg_{-\beta}\oplus\Lg_{-\alpha-\beta}\oplus\Lg_{-2\alpha-\beta}&&E_2=\Lg_{-\beta}\oplus\Lg_{-\alpha-\beta}\oplus\Lg_{3\alpha+2\beta}
&
&E_3=\Lg_{-\beta}\oplus\Lg_{3\alpha+2\beta}\\&E_4=\Lg_{-\beta}\oplus\Lg_{-\alpha-\beta}&
&E_5=\Lg_{3\alpha+2\beta}&&E_6=\Lg_{-\beta}.
\end{align*}
Let $E_i^\perp\subset\Lg_1$ denote the orthogonal space  of $E_i\subset\Lg_{-1}$. Note that $E_i\subset\cN_{-1}$, $i=1,\ldots,6$, and $E_i^\perp\subset \cN_1$, $i=1,\ldots,5$, and $E_6^\perp\not\subset\cN_1$. Let
\begin{align*}
\pi_i\colon &G_0\times^{B_0}E_i\to\cN_{-1}\subset\Lg_{-1},\ (g,y)\mapsto g.y\\
\check\pi_i\colon& G_0\times^{B_0}E_i^\perp\to\Lg_1,\ (g,x)\mapsto g.x.
\end{align*}
We have
\beq\label{eqn-ft}
\fF(\pi_i)_*\bC\cong(\check\pi_i)_*\bC[-]\text{ ($-$ denotes appropriate shift).}
\eeq

It is now easy to check that the sheaves in part (2) of Proposition~\ref{prop-g2-3s} occur in some $\fF(\pi_i)_*\bC$ as a direct summand (up to shift), for $i=1,\ldots,5$. So part (2) follows. For example, let us consider $\pi_1$. One checks that
\beqn
\pi_1^{-1}(z_1)=\{u_0:=B_0,u_1:=n_\alpha B_0,u_2:=x_\alpha(-1)n_\alpha B_0\}
\eeqn
and $\gamma_1:(u_0,u_1,u_2)\mapsto(u_0,u_2,u_1),\ \gamma_2:(u_0,u_1,u_2)\mapsto(u_1,u_0,u_2)$, where $\gamma_1,\gamma_2\in Z_{G_0}(z_5)$, see~\eqref{eqn-centraliser}. Hence $\on{IC}(\cO_{z_1},\bC\oplus\cL)$ is a direct summand of $(\pi_1)_*\bC[-]$, where $\cL$ is the rank $2$ local system on $\cO_{z_1}$ given by the standard representation of $S_3$. Since $E_1^\perp\subset\cN_1$, we conclude, in view of~\eqref{eqn-ft}, that $\fF(\on{IC}(\cO_{z_1},\bC))$ and $\fF(\on{IC}(\cO_{z_1},\cL))$ are supported on nilpotent elements. That is, $\on{IC}(\cO_{z_1},\bC)$ and $\on{IC}(\cO_{z_1},\cL)$ are bi-orbital.

Consider $\pi_6$ and $\check\pi_6$. Let $z=X_{\alpha+\beta}+X_{2\alpha+\beta}+X_{-3\alpha-2\beta}\in\Lg_1^{rs}$.
 One checks that $\check\pi_6^{-1}(z)=\{B_0,n_\alpha B_0,x_\alpha(1)n_\alpha\}$ and 
\begin{align*}
&(\pi_6)_*\bC[-]\cong\on{IC}(\cO_{z_5},\bC)\oplus \bC_{\{0\}}
&&(\check\pi_6)_*\bC[-]\cong\on{IC}(\Lg_1^{rs},\bC\oplus\cE)
\end{align*}
where $\cE$ is a rank $2$ local system. Since $\fF \bC_{\{0\}}\cong\on{IC}(\Lg_1^{rs},\bC)$, it follows that $$\fF\on{IC}(\cO_{z_5},\bC)\cong\on{IC}(\Lg_1^{rs},\cE).$$
Since $\cP_{\chi_0}^\dag=\on{IC}(\Lg_1^{rs},\cH_W)$ gives rise to the cuspidal character sheaves $\on{IC}(\Lg_1^{rs},\bC)$ and $\on{IC}(\Lg_1^{rs},\cE_i)$, $i=1,2,3$ with $\on{rank}(\cE_i)=1$, part (1) and (3) of Proposition~\ref{prop-g2-3s} follow. This completes the proof of Proposition~\ref{prop-g2-3s} and hence that of Theorem~\ref{mainthm} for $(G_2,3_\rs)$.

\end{document}